\numberwithin{equation}{section}
\newtheorem{theorem}{Theorem}[section]
\newtheorem{lemma}[theorem]{Lemma}
\newtheorem{proposition}[theorem]{Proposition}
\newtheorem{corollary}[theorem]{Corollary}
\newtheorem{question}[theorem]{Question}
\theoremstyle{definition}
\newtheorem{definition}[theorem]{Definition} 
\newtheorem{construction}[theorem]{Construction}
\newtheorem{remark}[theorem]{Remark}
\newtheorem{example}[theorem]{Example}
\def\frk{\mathfrak}               
\def\Phi{{\frk N}}
\def\opn#1#2{\def#1{\operatorname{#2}}} 
\opn\chara{char} 
\opn\length{\ell} 
\opn\pd{pd} 
\opn\rk{rk}
\opn\projdim{proj\,dim} 
\opn\injdim{inj\,dim} 
\opn\rank{rank}
\opn\depth{depth} 
\opn\grade{grade} 
\opn\height{height}
\opn\embdim{emb\,dim} 
\opn\codim{codim}
\opn\Tr{Tr} 
\opn\bigrank{big\,rank}
\opn\superheight{superheight}
\opn\lcm{lcm}
\opn\trdeg{tr\,deg}
\opn\reg{reg} 
\opn\lreg{lreg} 
\opn\ini{in} 
\opn\lpd{lpd}
\opn\size{size}
\opn\mult{mult}
\opn\dist{dist}
\opn\cone{cone}
\opn\lex{lex}
\opn\rev{rev}
\opn\div{div} \opn\Div{Div} \opn\cl{cl} \opn\Cl{Cl}
\opn\Spec{Spec} \opn\Supp{Supp} \opn\supp{supp} \opn\Sing{Sing}
\opn\Ass{Ass} \opn\Min{Min}
\opn\Ann{Ann} \opn\Rad{Rad} \opn\Soc{Soc}
\opn\Syz{Syz} \opn\Im{Im} \opn\Ker{Ker} \opn\Coker{Coker}
\opn\Am{Am} \opn\Hom{Hom} \opn\Tor{Tor} \opn\Ext{Ext}
\opn\End{End} \opn\Aut{Aut} \opn\id{id} \opn\ini{in}
\opn\nat{nat}
\opn\pff{pf}
\opn\Pf{Pf} \opn\GL{GL} \opn\SL{SL} \opn\mod{mod} \opn\ord{ord}
\opn\Gin{Gin}
\opn\Hilb{Hilb}\opn\adeg{adeg}\opn\std{std}\opn\ip{infpt}
\opn\Pol{Pol}
\opn\sat{sat}
\opn\Var{Var}
\opn\Gen{Gen}
\opn\aff{aff} \opn\con{conv} \opn\relint{relint} \opn\st{st}
\opn\lk{lk} \opn\cn{cn} \opn\core{core} \opn\vol{vol}
\opn\link{link} \opn\star{star}
\opn\gr{gr}
\def\pot#1#2{#1[\kern-0.28ex[#2]\kern-0.28ex]}
\opn\dirlim{\underrightarrow{\lim}}
\opn\inivlim{\underleftarrow{\lim}}
\begin{document}

 
\title{Homological invariants of Cameron--Walker Graphs}
\thanks{\today}

\author[T. Hibi]{Takayuki Hibi}
\address{Department of Pure and Applied Mathematics, Graduate School
of Information Science and Technology, Osaka University, Suita, Osaka
565-0871, Japan}
\email{hibi@math.sci.osaka-u.ac.jp}

\author[H. Kanno]{Hiroju Kanno}
\address{Department of Pure and Applied Mathematics, Graduate School
of Information Science and Technology, Osaka University, Suita, Osaka
565-0871, Japan}
\email{u825139b@ecs.osaka-u.ac.jp}

\author[K. Kimura]{Kyouko Kimura}
\address{Department of Mathematics, 
Faculty of Science, 
Shizuoka University, 836 Ohya, Suruga-ku, Shizuoka 422-8529, Japan}
\email{kimura.kyoko.a@shizuoka.ac.jp}

\author[K. Matsuda]{Kazunori Matsuda}
\address{Kitami Institute of Technology, Kitami, Hokkaido 090-8507, Japan}
\email{kaz-matsuda@mail.kitami-it.ac.jp}
 
\author[A. Van Tuyl]{Adam Van Tuyl}
\address{Department of Mathematics and Statistics\\
McMaster University, Hamilton, ON, L8S 4L8, Canada}
\email{vantuyl@math.mcmaster.ca}

\keywords{edge ideal, dimension, depth, Castelnuovo-Mumford regularity, $h$-polynomials.
}
\subjclass[2010]{13D02, 13D40, 05C70, 05E40}
 
\begin{abstract}
Let $G$ be a finite simple connected graph on $[n]$ and $R = K[x_1, \ldots, x_n]$ the polynomial ring in $n$ variables over a field $K$.  The edge ideal of $G$ is the ideal $I(G)$ of $R$ which is generated by those monomials $x_ix_j$ for which $\{i, j\}$ is an edge of $G$.  In the present paper, the possible tuples $(n, \depth (R/I(G)), \reg (R/I(G)), \dim R/I(G), \deg h(R/I(G)))$, where $\deg h(R/I(G))$ is the degree of the $h$-polynomial of $R/I(G)$, arising from Cameron--Walker graphs on $[n]$ will be completely determined.
\end{abstract}

\maketitle


\section*{Introduction}
Among the current trends of commutative algebra,
the role of combinatorics is distinguished.
In particular, the combinatorics of finite simple graphs has created
fascinating research projects in commutative algebra.
Let $G$ be a finite simple graph on the vertex set
$[n] = \{1, \ldots, n\}$
and let $E(G)$ be the set of edges of $G$.
Let $R = K[x_1, \ldots, x_n]$ denote the polynomial ring in $n$ variables
over a field $K$.
The edge ideal of $G$, denoted by $I(G)$, is the ideal of $R$
generated by those monomials $x_i x_j$ with $\{ i, j \} \in E(G)$.

\par
In the present paper, we focus on the invariants
$\depth (R/I(G))$, $\reg (R/I(G))$, $\dim R/I(G)$,
and $\deg h(R/I(G))$, where $\reg (R/I(G))$ denotes
the (Castelnuovo--Mumford) regularity of $R/I(G)$ and
$\deg h(R/I(G))$ denotes the degree of the $h$-polynomial of $R/I(G)$;
see Section \ref{sec:background} for the definitions. 
The relations among these have been studied, for example,
in \cite{HKM, HKMT, HKMVT, HMVT, KumarKumarSarkar, Rinaldo, SeyedFakhariYassemi}.
In particular, Kumar, Kumar, and Sarkar
(\cite[Theorem 4.13]{KumarKumarSarkar}) proved that,
in general, there is no relation between $\depth (R/I(G))$ and $\reg (R/I(G))$,
or $\depth (R/I(G))$ and $\deg h(R/I(G))$.
However the inequality
\begin{displaymath}
  \deg h(R/I(G)) - \reg (R/I(G))
  \leq \dim R/I(G) - \depth (R/I(G))
\end{displaymath}
holds (\cite[Corollary B.4.1]{V2}).
(In fact, this inequality holds for any homogeneous ideal of $R$.) 
Also the first, and last two authors proved,
in \cite[Theorem 13]{HMVT}, the inequality 
\begin{displaymath}
  \reg (R/I(G)) + \deg h(R/I(G)) \leq n. 
\end{displaymath}
In the previous paper \cite{HKMVT},
motivated by the above relation among the regularity,
the degree of the $h$-polynomial,
and the number of vertices of $G$,
the authors investigated the
possible tuples $(\reg (R/I(G)), \deg h(R/I(G)))$ as one varies over
all  connected graphs $G$ on a fixed number of vertices.
In particular, it was shown in \cite{HKMVT} that when we restrict to
the family of 
Cameron--Walker graphs, one can completely determine all such pairs.
(We will recall the definition of a Cameron--Walker graph
in Section \ref{prelim-sec}; see also \cite{HHKO, HKMT,TNT}.) 
Furthermore, in \cite{HKMT} it was proved that
\begin{displaymath}
  \depth (R/I(G)) \leq \reg (R/I(G)) \leq \dim R/I(G)
  = \deg h(R/I(G)) 
\end{displaymath}
for any Cameron--Walker graph $G$. 

In the present paper, we first focus on the relation between
$\depth (R/I(G))$ and $\dim (R/I(G))$ and investigate the
possible ordered pairs $(\depth (R/I(G)), \dim R/I(G))$ arising from
connected graphs $G$ on a  fixed number of vertices
(Section \ref{sec:DepthDimGraph}).
In particular, when we restrict to
Cameron--Walker graphs, we completely determine such pairs
(Theorem \ref{CWdd}).
Moreover, when we restrict to the case that $G$ is a Cameron--Walker graph,
we completely determine all the possible sequences
\begin{displaymath}
  (|V(G)|, \depth (R/I(G)), \reg (R/I(G)), \dim R/I(G), \deg h(R/I(G))), 
\end{displaymath}
which is the main result of the paper (see Theorem \ref{maintheorem3}).

Our paper is organized as follows.
First, the required background is briefly summarized
in Section \ref{sec:background}.
Section \ref{sec:DepthDimGraph} is devoted to finding
the possible pairs $(\depth (R/I(G)), \dim R/I(G))$
arising from finite simple connected graphs $G$ on $[n]$.
Background information on Cameron--Walker graphs
is presented in Section \ref{prelim-sec}.
Also, all possible tuples $(\depth (R/I(G)), \dim R/I(G))$
arising from Cameron--Walker graphs $G$ on $[n]$ are completely determined.
The highlight of this paper is Section \ref{sec:MainResult},
where Theorem \ref{maintheorem3} is finally proved. 

\begin{center}
{\bf Acknowledgements} 
\end{center}

Hibi, Kimura, and Matsuda's research was supported by JSPS KAKENHI 
19H00637, 15K17507 and 20K03550. 
Van Tuyl's research was supported by NSERC Discovery Grant 2019-05412. 
This work was supported by the Research Institute for Mathematical Sciences, 
an International Joint Usage/Research Center located in Kyoto University.


\section{Background}
\label{sec:background}
We recall some of the relevant prerequisites about
homological invariants, graph theory and edge ideals.  

\subsection{Homological Invariants}
Let $R = K[x_1, \ldots, x_n]$ denote the polynomial ring in $n$ variables
over a field $K$ with $\deg x_i = 1$ for each $i$.  For any ideal
$I$ of $R$, the {\it dimension} of $R/I$, denoted $\dim R/I$, is
the length of the longest chain of prime ideals in $R/I$.  The
{\it depth} of $R/I$, denoted ${\rm depth}(R/I)$, is the length
of the longest regular sequence in $R/I$.

If $I \subseteq R$
is a homogeneous ideal, then the {\it Hilbert series} of $R/I$ is
$$H_{R/I}(t) = \sum_{i \geq 0} \dim_K [R/I]_it^{i}$$
where $[R/I]_i$ denotes the $i$-th graded piece of $R/I$.
If $\dim R/I = d$, then 
the Hilbert series of $R/I$ is the form 
$$
H_{R/I}(t) = \frac{h_{0} + h_{1}t + h_{2}t^{2} + \cdots + h_{s}t^{s}}{(1 - t)^d}
= \frac{h_{R/I}(t)}{(1-t)^d}, 
$$
where each $h_{i} \in \mathbb{Z}$ (\cite[Proposition 4.4.1]{BH})
and $h_{R/I}(1) \neq 0$.
We say that 
$$
h_{R/I}(t) = h_{0} + h_{1}t + h_{2}t^{2} + \cdots + h_{s}t^{s}
$$ 
with $h_{s} \neq 0$ is the {\em $h$-polynomial} of $R/I$.  
We put $\deg h(R/I) := \deg h_{R/I}(t)$. 

If $I$ is a homogeneous ideal of $R$, then the
({\em Castelnuovo--Mumford}) {\em regularity} of $R/I$ is given by
$$
{\rm reg}(R/I)=  \max\{j-i ~|~ \beta_{i,j}(R/I) \neq 0\}
$$
where $\beta_{i,j}(R/I)$ denotes the $(i,j)$-th graded Betti number
in the minimal graded free resolution of $R/I$.  (For more details
see, for example,  \cite[Section 18]{Peeva}.)


\subsection{Graph theory}
Let $G = (V(G), E(G))$ be a finite simple graph
(i.e., a graph with no loops and no multiple edges) on the vertex set 
$V(G) = \{x_{1}, \ldots, x_{n}\}$ and edge set $E(G)$.
For a subset $W \subset V(G)$, the {\em induced subgraph} of $G$ on $W$,
denoted by $G_W$, is the graph whose vertex set is $W$ and
edge set is the set of all edges of $G$ contained in $W$.
For a vertex $x_i \in V(G)$, we denote by $N_G (x_i)$,
the set of all neighbours of $x_i$ in $G$:
\begin{displaymath}
  N_G (x_i) = \{ x_j \in V(G) \; | \; \{ x_i, x_j \} \in E(G) \}.
\end{displaymath}
A subset $S \subset V(G)$ is an {\em independent set} of $G$ if
$\{x_{i}, x_{j}\} \not\in E(G)$ for all $x_{i}, x_{j} \in S$. 
In particular, the empty set $\emptyset$ is an independent set of $G$. 

The  $S$-suspension (\cite[p.313]{HKM}) of a graph $G$ will play
an important role in Section \ref{sec:DepthDimGraph};
we recall this construction.
Let $G = (V(G),E(G))$ be a finite simple graph. 
For any independent set $S \subset V(G) = \{x_1,\ldots,x_n\}$,
we construct a graph $G^{S}$ with vertex and edge set given by:
\begin{enumerate}
\item[$\bullet$] $V(G^{S}) = V(G) \cup \{x_{n + 1}\}$, where $x_{n + 1}$
  is a new vertex, and
\item[$\bullet$] $E(G^{S}) = E(G) \cup \left\{ \{x_{i}, x_{n + 1}\} ~|~ x_{i} \not\in S \right\}.$
  \end{enumerate}
That is, we add a new vertex $x_{n+1}$ and join it to every vertex {\it not}
in $S$.  The graph $G^{S}$ is called the {\em $S$-suspension} of $G$.  
We note that this construction still holds in the case of $S = \emptyset$.

\subsection{Edge ideals}  
In this subsection, we define the edge ideal of a finite simple graph. 
Let $G=(V(G),E(G))$ be a finite simple graph on $V(G) = \{x_1,\ldots,x_n\}$. 
We associate with $G$ the quadratic square-free monomial ideal
$$
I(G) = ( x_{i}x_{j} ~|~ \{x_{i}, x_{j}\} \in E(G) ) \subseteq
R = K[x_1,\ldots,x_n]. 
$$
The ideal $I(G)$ is called the {\it edge ideal} of the graph $G$.

It is known that the dimension of $R/I(G)$ is given by  
\begin{lemma}
\label{dim}
$$\dim R/I(G) =  \max\left.\left\{ |S| ~\right|~ S \ \text{is an independent set of}\  G \right\}.$$   
\end{lemma}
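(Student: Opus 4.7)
The plan is to translate the statement about $\dim R/I(G)$ into a statement about the minimal primes of the edge ideal, and then to recognize those minimal primes as coming from the minimal vertex covers of $G$, which are in complementary bijection with the maximal independent sets. Since $I(G)$ is a squarefree monomial ideal, every associated prime is of the form $P_{C} = (x_{i} : i \in C)$ for some $C \subseteq [n]$, and each such $P_{C}$ has $\dim R/P_{C} = n - |C|$.

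First I would verify the following characterization: a prime $P_{C}$ contains $I(G)$ if and only if $C$ is a vertex cover of $G$, i.e.\ every edge $\{i,j\} \in E(G)$ has $i \in C$ or $j \in C$. One direction is immediate since $x_{i}x_{j} \in P_{C}$ forces $x_{i}$ or $x_{j}$ into $P_{C}$; the other direction follows by inspecting the generators. Consequently, the minimal primes of $I(G)$ are precisely the $P_{C}$ with $C$ a minimal vertex cover. Using the standard fact
\begin{displaymath}
  \dim R/I(G) = \max\{\dim R/P : P \in \mathrm{Min}(I(G))\},
\end{displaymath}
this yields $\dim R/I(G) = n - \min\{|C| : C \text{ is a vertex cover of } G\}$.

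Finally I would invoke the classical duality between vertex covers and independent sets: $C \subseteq V(G)$ is a vertex cover of $G$ if and only if $V(G) \setminus C$ is an independent set of $G$. Taking complements, $\min\{|C| : C \text{ is a vertex cover}\} = n - \max\{|S| : S \text{ is an independent set}\}$, which combined with the previous paragraph gives exactly the claimed formula.

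There is no real obstacle here; the statement is classical and the only point where care is needed is the first step, namely proving the precise bijection between minimal primes of a squarefree monomial ideal and minimal vertex covers of the associated graph. One could equivalently phrase the whole argument via the Stanley--Reisner correspondence, noting that $I(G)$ is the Stanley--Reisner ideal of the independence complex $\mathrm{Ind}(G)$, whose dimension as a simplicial complex is one less than the maximum size of an independent set, and that $\dim R/I_{\Delta} = \dim \Delta + 1$ for any simplicial complex $\Delta$.
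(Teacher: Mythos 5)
Your proposal is correct and complete. Note, however, that the paper itself offers no proof of this lemma: it is stated as a known fact (``It is known that the dimension of $R/I(G)$ is given by\dots''), with the standard references in the background literature, so there is no internal argument to compare yours against. What you wrote is precisely the classical argument that underlies the citation: since $I(G)$ is a squarefree monomial (hence radical) ideal, its minimal primes are the ideals $P_C = (x_i : i \in C)$ with $C$ a minimal vertex cover, the dimension of $R/I(G)$ is the maximum of $\dim R/P_C = n - |C|$ over these, and complementation exchanges vertex covers with independent sets. Each step you flag as needing care (the identification of minimal primes with minimal vertex covers, the formula $\dim R/I = \max\{\dim R/P : P \in \Min(I)\}$) is handled correctly, and your alternative phrasing via the Stanley--Reisner ideal of the independence complex, with $\dim R/I_\Delta = \dim \Delta + 1$, is an equally valid one-line route. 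In short: the proposal fills in, correctly and by the standard method, a proof the paper deliberately omits.
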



Let $G$ be a finite simple graph, and let $S \subset V(G)$ be an independent set of $G$. 
The following lemma describes the relationship between homological invariants of 
$I(G)$ and $I(G^{S})$.

\begin{lemma}\label{S-suspension}
  Let $G$ be a finite simple graph on $V(G) = \{x_1,\ldots,x_n\}$ and
  $G^S$ the $S$-suspension of $G$ for some independent set $S$ of $G$.  
  If $I(G) \subseteq R =K[x_1,\ldots,x_n]$ and
  $I(G^S) \subseteq  R' = K[x_1,\ldots,x_n,x_{n+1}]$
  are the respective edge ideals,
  then
  \begin{enumerate}
	\item[$(i)$] $\dim R'/I(G^{S}) = \dim R/I(G)$ if $|S| \leq \dim R/I(G) - 1$. 
	\item[$(ii)$] ${\rm depth}(R'/I(G^{S})) = {\rm depth}(R/I(G))$ if $|S| = {\rm depth}(R/I(G)) - 1$. 
	\item[$(iii)$] ${\rm depth}(R'/I(G^{S})) = 1$ if $S = \emptyset$. 
\end{enumerate} 
\end{lemma}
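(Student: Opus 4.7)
My plan is to handle part (i) directly from Lemma \ref{dim}, while parts (ii) and (iii) combine a single short exact sequence (for the depth lower bound) with an associated-prime argument (for the upper bound). For (i), the independent sets of $G^S$ split into two families: those avoiding $x_{n+1}$, which are exactly the independent sets of $G$, and those containing $x_{n+1}$, whose remaining vertices must be non-neighbors of $x_{n+1}$ in $G^S$, i.e., must lie in $S$; since $S$ is already independent in $G$, the second family has maximum size $|S|+1$. Hence $\dim R'/I(G^S) = \max\{\dim R/I(G),\ |S|+1\}$, and the hypothesis $|S|+1 \leq \dim R/I(G)$ gives the claim.

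For (ii) and (iii), the common tool is the short exact sequence
\begin{equation*}
0 \to \bigl(R'/(I(G^S):x_{n+1})\bigr)(-1) \xrightarrow{\cdot x_{n+1}} R'/I(G^S) \to R'/(I(G^S)+(x_{n+1})) \to 0.
\end{equation*}
Since $S$ is independent in $G$, every generator $x_i x_j$ of $I(G)$ has at least one endpoint outside $S$, which yields $(I(G^S):x_{n+1}) = (x_i : x_i \notin S)$. Consequently the leftmost module is isomorphic to the polynomial ring $K[\{x_i : x_i \in S\} \cup \{x_{n+1}\}]$, of depth $|S|+1$, while $R'/(I(G^S)+(x_{n+1})) \cong R/I(G)$ has depth $\depth(R/I(G))$. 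The depth lemma then yields the lower bound $\depth R'/I(G^S) \geq \min\{|S|+1,\ \depth(R/I(G))\}$.

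For the matching upper bound, the key observation is that $V(G) \setminus S$ is a minimal vertex cover of $G^S$: the independence of $S$ makes it a cover, and each edge $\{x_i, x_{n+1}\}$ with $x_i \notin S$ forces $x_i$ into any cover that omits $x_{n+1}$, which gives minimality. Hence $P = (x_i : x_i \notin S)$ is a minimal prime --- and therefore an associated prime --- of $I(G^S)$, with $\dim R'/P = |S|+1$. For (ii), the hypothesis $|S|+1 = \depth(R/I(G))$ forces both arguments of the depth-lemma minimum to agree, and combining with the associated-prime upper bound gives $\depth R'/I(G^S) = |S|+1 = \depth(R/I(G))$. For (iii), the case $S = \emptyset$ gives $P = (x_1,\ldots,x_n)$ with $\dim R'/P = 1$, so $\depth R'/I(G^S) \leq 1$, while the depth-lemma lower bound $\min\{1,\ \depth(R/I(G))\} = 1$ closes the argument (using $\depth(R/I(G)) \geq 1$ whenever $V(G) \neq \emptyset$).

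The main points to verify carefully are the ideal colon identity $(I(G^S):x_{n+1}) = (x_i : x_i \notin S)$ and the claim that $V(G)\setminus S$ is a \emph{minimal} vertex cover of $G^S$; both depend decisively on the independence of $S$ in $G$, but I do not anticipate a serious obstacle beyond this bookkeeping.
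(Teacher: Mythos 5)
Your proof is correct, and for parts (ii) and (iii) it follows essentially the paper's own route: the same short exact sequence given by multiplication by $x_{n+1}$, the same colon identity $(I(G^{S}):(x_{n+1})) = (x_i \mid x_i \notin S)$ (which, as you observe, is exactly where the independence of $S$ enters), the same identifications $R'/(I(G^{S}):(x_{n+1})) \cong K[\{x_i \mid x_i \in S\} \cup \{x_{n+1}\}]$ and $R'/(I(G^{S})+(x_{n+1})) \cong R/I(G)$, and then the Depth Lemma. You deviate in two places, both sound. First, for (i) you argue directly from Lemma \ref{dim}: every independent set of $G^{S}$ either avoids $x_{n+1}$ and is an independent set of $G$, or is $\{x_{n+1}\} \cup T$ with $T \subseteq S$, so $\dim R'/I(G^{S}) = \max\{\dim R/I(G),\, |S|+1\}$; the paper instead cites \cite[Lemma 1.5]{HKM}, so your version makes the lemma self-contained. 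Second, your upper bound on $\depth(R'/I(G^{S}))$ via the minimal vertex cover $V(G)\setminus S$ of $G^{S}$ and the resulting associated prime is correct, but it is not actually needed: the Depth Lemma of \cite[Lemma 3.1.4]{V} includes the inequality $\depth A \geq \min\{\depth B, \depth C + 1\}$ for a short exact sequence $0 \to A \to B \to C \to 0$, and applying it with $\depth A = |S|+1$ already forces $\depth B \leq |S|+1$ in both cases (ii) and (iii), which is how the paper closes the argument with no reference to associated primes. So your extra step is a valid alternative for the upper bound rather than a repair of any gap.
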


\begin{proof}
By virtue of \cite[Lemma 1.5]{HKM}, we have (i). 

We show (ii) and (iii). Let us consider the following short exact sequence:
\[
0 \to R'/(I(G^{S}):(x_{n + 1})) \ (-1) \xrightarrow{\times x_{n + 1}} R'/I(G^{S}) \to R'/(I(G^{S}) + (x_{n + 1})) \to 0. 
\]
First note that $(I(G^S):(x_{n+1}))=(x_i~|~ x_i \not\in S\cup \{x_{n+1}\})$.  
Then it follows that ${\rm depth}(R'/(I(G^{S}):(x_{n + 1}))) = |S| + 1$
and ${\rm depth}(R'/(I(G^{S}) + (x_{n + 1}))) = {\rm depth}(R/I(G))$ 
since $R'/(I(G^{S}):(x_{n + 1})) \cong K\left[x_{i} ~|~ x_{i} \in S \cup \{x_{n + 1}\}\right]$ and $R'/(I(G^{S}) + (x_{n + 1})) \cong R/I(G)$.  
By virtue of the Depth Lemma as given in \cite[Lemma 3.1.4]{V},
we have the desired conclusion.
\end{proof}

\par
As an easy application of Lemmas \ref{dim} and \ref{S-suspension},
we compute the dimension and the depth of the edge ideal of a star graph.
Recall that a star graph is  a graph joining some paths of length $1$
at one common vertex (see Figure \ref{fig:star}).

\begin{figure}[htbp]
  \centering
\bigskip

\begin{xy}
	\ar@{} (0,0);(80, -16)   *\cir<4pt>{} = "C"
	\ar@{-} "C";(60, 0)  *\cir<4pt>{} = "D";
	\ar@{-} "C";(70, 0)  *\cir<4pt>{} = "E";
	\ar@{} "C"; (80, 0) *++!U{\cdots}
	\ar@{-} "C";(100, 0)  *\cir<4pt>{} = "F";
\end{xy}

\bigskip

  \caption{The star graph}
  \label{fig:star}
\end{figure}
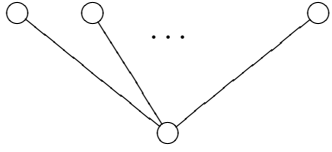

\begin{proposition}
  \label{star}
  Let $G$ be a star graph with $n \geq 2$ vertices. Then
  \begin{displaymath}
    \dim R/I(G) = n-1 ~\mbox{and}~ \depth (R/I(G)) = 1.
  \end{displaymath}
\end{proposition}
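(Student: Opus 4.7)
The plan is to realize the star graph as an $\emptyset$-suspension of an edgeless graph and then read off both equalities directly from Lemma \ref{S-suspension}. First I would label the vertices so that $x_n$ is the central vertex and $x_1, \ldots, x_{n-1}$ are the leaves, and set $H$ to be the edgeless graph on $\{x_1, \ldots, x_{n-1}\}$. Because $I(H) = (0)$, the quotient $K[x_1,\ldots,x_{n-1}]/I(H)$ is a polynomial ring of dimension $n-1$. Next I would verify that the star $G$ is exactly the $\emptyset$-suspension $H^\emptyset$: since no vertex of $H$ lies in the empty set, the construction attaches the new vertex $x_n$ to every $x_i$ with $1 \leq i \leq n-1$, which is precisely the edge set of the star.

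Granting this identification, the dimension equality follows from Lemma \ref{S-suspension}(i): the numerical hypothesis $|\emptyset| \leq \dim K[x_1,\ldots,x_{n-1}]/I(H) - 1$ reads $0 \leq n-2$, which holds for $n \geq 2$, so $\dim R/I(G) = n-1$. The depth equality is even more direct, being an immediate invocation of Lemma \ref{S-suspension}(iii) with $S = \emptyset$, yielding $\depth R/I(G) = 1$.

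I do not anticipate any substantive obstacle. The only nontrivial step is the structural observation that a star arises as an $\emptyset$-suspension; once this is in place, both conclusions are one-line corollaries of the lemma proved immediately above. As a sanity check, one may alternatively obtain the dimension from Lemma \ref{dim} by noting that the set of leaves is a maximum independent set of size $n-1$.
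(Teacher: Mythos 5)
Your proof is correct, and its core idea is exactly the paper's: realize the star as the $\emptyset$-suspension of the edgeless graph $G_0$ on the $n-1$ leaves and invoke Lemma \ref{S-suspension}(iii) to get $\depth(R/I(G)) = 1$; that half of your argument coincides with the paper's word for word. The only divergence is the dimension: the paper reads it off directly from Lemma \ref{dim}, since the set of leaves $\{x_1,\ldots,x_{n-1}\}$ is an independent set of maximum cardinality, whereas you deduce it from Lemma \ref{S-suspension}(i), checking the hypothesis $|\emptyset| = 0 \leq \dim K[x_1,\ldots,x_{n-1}]/I(G_0) - 1 = n-2$, valid for $n \geq 2$. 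Both are sound one-line arguments; your route has the small aesthetic advantage of deriving everything from the single suspension lemma, while the paper's is marginally more self-contained, since Lemma \ref{S-suspension}(i) is itself only cited from \cite[Lemma 1.5]{HKM} rather than proved in the paper. You even flag the paper's Lemma \ref{dim} argument as your sanity check, so the two proofs are, in substance, interchangeable.
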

\begin{proof}
  We set $V(G) = \{ x_1, \ldots, x_n \}$ and
  $E(G) = \{ \{ x_i, x_n \} ~|~ i = 1, \ldots, n-1 \}$.
  Then the set $\{ x_1, \ldots, x_{n-1} \} \subset V(G)$ is independent with
  maximal cardinality. Hence Lemma \ref{dim} implies
  $\dim R/(G) = n-1$.

  \par
  Let $G_0$ be the graph consisting of $n-1$ isolated vertices
  $x_1, \ldots, x_{n-1}$.
  Then $G$ coincides with the $S$-suspension of $G_0$ with $S = \emptyset$.
  Hence Lemma \ref{S-suspension} (iii) implies
  $\depth (R/I(G)) = 1$.
\end{proof}


\section{The set ${\rm Graph}_{{\rm depth}, \dim}(n)$}
\label{sec:DepthDimGraph}
Let ${\rm Graph}(n)$ denote the set of all finite simple connected graphs
with $n$ vertices and 
let ${\rm Graph}_{{\rm depth}, \dim}(n)$ denote the set of all possible pairs
$(\depth(R/I(G)), \dim R/I(G))$ arising from $G \in {\rm Graph}(n)$, i.e., 
\[
{\rm Graph}_{{\rm depth}, \dim}(n) = \{(\depth(R/I(G)), \dim R/I(G))
\, | \, G \in {\rm Graph}(n) \}. 
\]
In this section we discuss which pairs $(a,b) \in \mathbb{N}^2$ are elements of
${\rm Graph}_{{\rm depth}, \dim}(n)$. 
Note that if $(a,b) \in {\rm Graph}_{{\rm depth}, \dim}(n)$, then
$1 \leq a \leq b \leq n$ because of the well-known inequality
$1 \leq \depth (R/I(G)) \leq \dim R/I(G) \leq n$
for any finite simple graph $G$ on $[n]$.

\par
We can easily compute ${\rm Graph}_{{\rm depth}, \dim}(n)$ when $n$ is small. 
\begin{example}
  \label{DD(123)}
  We compute ${\rm Graph}_{{\rm depth}, \dim}(n)$ when $n=1,2,3$:
  \begin{enumerate}
  \item ${\rm Graph}_{{\rm depth}, \dim}(1) = \{ (1, 1) \}$.
    Actually, a graph $G$ with one vertex consists of one isolated vertex.
    Hence $I(G) = (0) \subset R = K[x_1]$
    and $\depth (R/I(G)) =\dim R/I(G) = 1$. 
  \item ${\rm Graph}_{{\rm depth}, \dim} (2) = \{ (1, 1) \}$.
    Indeed, a connected graph $G$ with two vertices consists
    of two vertices connected by an edge. 
    Hence $I(G) = ( x_1 x_2 ) \subset R = K[x_1, x_2]$.
    Then $\depth (R/I(G)) = \dim R/I(G) = 1$. 
  \item ${\rm Graph}_{{\rm depth}, \dim}(3) = \{(1, 1), (1, 2)\}$.
    A connected graph $G$ with three vertices is either
    a line graph with three vertices or a triangle. 
    In the former (resp.\ latter) case,
    we can write $I(G) = (x_{1}x_{2}, x_{2}x_{3}) \subset R$
    (resp.\  $I(G) = (x_{1}x_{2}, x_{1}x_{3}, x_{2}x_{3}) \subset R$)
    where
    $R = K[x_1, x_2, x_3]$. 
    Then $\depth (R/I(G)) = 1$, $\dim R/I(G) = 2$
    (resp.\  $\depth (R/I(G)) = \dim R/I(G) = 1$). 
  \end{enumerate}
\end{example}


\par
We give some observations on ${\rm Graph}_{{\rm depth}, \dim}(n)$.
The first one is an easy consequence of Proposition \ref{star}. 

\begin{lemma}
  \label{(1,n-1)}
  For all $n \geq 2$, we have
  $(1, n-1) \in {\rm Graph}_{{\rm depth}, \dim}(n)$.
\end{lemma}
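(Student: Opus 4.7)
The plan is to exhibit an explicit graph $G$ on $n$ vertices that realizes the pair $(1, n-1)$. The natural candidate, given the preceding discussion, is the star graph on $n$ vertices.

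First I would note that the star graph $G$ on $n \geq 2$ vertices is a finite simple connected graph, so $G \in \mathrm{Graph}(n)$. Then I would directly invoke Proposition \ref{star}, which asserts that $\dim R/I(G) = n-1$ and $\depth(R/I(G)) = 1$ for such a $G$. By the definition of $\mathrm{Graph}_{\mathrm{depth}, \dim}(n)$, this immediately places the pair $(1, n-1)$ in the set.

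Since Proposition \ref{star} has already been established in the preceding subsection (via Lemma \ref{dim} for the dimension and Lemma \ref{S-suspension}(iii) for the depth, realizing the star as the $\emptyset$-suspension of $n-1$ isolated vertices), there is no real obstacle here: the lemma is essentially a restatement of Proposition \ref{star} in the language of the set $\mathrm{Graph}_{\mathrm{depth}, \dim}(n)$. The proof should be no more than one or two sentences.
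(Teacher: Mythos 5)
Your proof is correct and matches the paper's approach exactly: the paper states this lemma as "an easy consequence of Proposition \ref{star}," i.e., the star graph on $n$ vertices realizes the pair $(1,n-1)$. Nothing is missing.
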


The second one relies heavily on the $S$-suspension construction.
\begin{lemma}\label{BasicLemma}
  For all $n \geq 1$, we have
  ${\rm Graph}_{{\rm depth}, \dim}(n) \subseteq {\rm Graph}_{{\rm depth}, \dim}(n+1)$. 
\end{lemma}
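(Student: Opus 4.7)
The plan is to use the $S$-suspension construction to lift an arbitrary graph realizing a given pair $(a,b) \in {\rm Graph}_{{\rm depth}, \dim}(n)$ to a connected graph on $n+1$ vertices realizing the same pair. Fix $G \in {\rm Graph}(n)$ with $\depth(R/I(G)) = a$ and $\dim R/I(G) = b$. The idea is to choose an independent set $S \subseteq V(G)$ so that a single application of Lemma \ref{S-suspension} controls both invariants of $G^S$ at once.

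The natural choice is $|S| = a - 1$. Such an $S$ exists because $G$ has an independent set of size $b \geq a$ by Lemma \ref{dim}, and any subset of an independent set is independent; in particular, when $a = 1$ we simply take $S = \emptyset$. With this cardinality, the hypothesis $|S| = \depth(R/I(G)) - 1$ of Lemma \ref{S-suspension}(ii) (or Lemma \ref{S-suspension}(iii) when $a = 1$) gives $\depth(R'/I(G^S)) = a$, while $|S| = a - 1 \leq b - 1 = \dim R/I(G) - 1$ activates Lemma \ref{S-suspension}(i) and gives $\dim R'/I(G^S) = b$.

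It remains to check that $G^S$ is connected, so that it indeed lies in ${\rm Graph}(n+1)$. By construction $x_{n+1}$ is adjacent to every vertex of $V(G) \setminus S$, and since $G$ is itself connected it suffices to verify $V(G) \setminus S \neq \emptyset$, i.e.\ $a - 1 < n$. For $n = 1$ the only pair is $(1,1)$, and $G^{\emptyset}$ is the edge $K_2 \in {\rm Graph}(2)$. For $n \geq 2$, connectedness of $G$ forces $I(G) \neq 0$, so $\height I(G) \geq 1$, whence $\depth(R/I(G)) \leq \dim R/I(G) \leq n-1$ and $a - 1 \leq n - 2 < n$.

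The argument is really just bookkeeping given Lemma \ref{S-suspension}; the only non-obvious observation is that a single choice $|S| = a-1$ simultaneously satisfies the hypotheses of parts (i) and (ii), and this is automatic from the inequality $a \leq b$ built into the definition of ${\rm Graph}_{{\rm depth}, \dim}(n)$.
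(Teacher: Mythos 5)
Your proof is correct and is essentially the paper's own argument: both take a graph realizing $(a,b)$, choose an independent set $S$ with $|S| = a-1$ (inside a maximum independent set, which exists since $a \leq b$), and apply Lemma \ref{S-suspension} to conclude that the $S$-suspension $G^S$ realizes the same pair on $n+1$ vertices. The only difference is that you explicitly verify connectedness of $G^S$ and handle $a=1$ via part (iii); the paper leaves these routine points implicit.
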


\begin{proof}
  Assume that $(a, b) \in {\rm Graph}_{{\rm depth}, \dim}(n)$. 
  Then there exists $G \in {\rm Graph}(n)$ with ${\rm depth}(R/I(G)) = a$ and
  $\dim R/I(G) = b$. Let $S$ be an independent set
  of $G$ with $|S| = {\rm depth}(R/I(G)) - 1 = a-1$.  This
  is possible, since ${\rm depth}(R/I(G)) \geq 1$ for all graphs $G$,
  and since there is an independent set
  $W$ with $|W| =\dim R/I(G) > {\rm depth}(R/I(G)) -1$.
  By virtue of Lemma \ref{S-suspension}, we have
  ${\rm depth}(R'/I(G^{S})) = a$ and $\dim R'/I(G^{S}) = b$.
  Hence $(a, b) \in {\rm Graph}_{{\rm depth}, \dim}(n + 1)$ since $|V(G^S)| = n + 1$. 
 \end{proof}

Third, we prove the following fact about ${\rm Graph}_{{\rm depth}, \dim}(n)$.
\begin{lemma}\label{DD}
  Let $a, b$ be integers with $1 \leq a \leq b$.
  Then $(a, b) \in {\rm Graph}_{{\rm depth}, \dim}(a+b)$. 
\end{lemma}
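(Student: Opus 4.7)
The plan is to exhibit, for each pair of integers $1 \leq a \leq b$, an explicit connected graph $G_{a,b}$ on $a+b$ vertices with $\depth(R/I(G_{a,b})) = a$ and $\dim R/I(G_{a,b}) = b$. I take $G_{a,b}$ to be the tree consisting of a central vertex $c$ from which emanate $a-1$ legs of length two and $b-a+1$ pendants; explicitly the vertex set is
\[
V(G_{a,b}) = \{c\} \cup \{u_i,v_i : 1 \leq i \leq a-1\} \cup \{w_j : 1 \leq j \leq b-a+1\},
\]
with edges $cu_i$ and $u_iv_i$ for $1 \leq i \leq a-1$, together with $cw_j$ for $1 \leq j \leq b-a+1$. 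A direct count gives $|V(G_{a,b})| = 1 + 2(a-1) + (b-a+1) = a+b$, and $G_{a,b}$ is connected because it is a tree.

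For the dimension I would apply Lemma \ref{dim}: the set $\{v_1,\ldots,v_{a-1},w_1,\ldots,w_{b-a+1}\}$ is an independent set of size $b$, and any other independent set $T$ is no larger, since if $c \in T$ then $T \subseteq \{c,v_1,\ldots,v_{a-1}\}$ has size at most $a \leq b$, while if $c \notin T$ then $T$ contains at most one of each pair $\{u_i,v_i\}$ together with any subset of the $w_j$, giving $|T| \leq (a-1)+(b-a+1)=b$. For the depth I would exploit the short exact sequence obtained by multiplication by the central variable $c$:
\[
0 \to (R/(I(G_{a,b}):c))(-1) \xrightarrow{\cdot c} R/I(G_{a,b}) \to R/(I(G_{a,b})+(c)) \to 0.
\]
Since each monomial $u_iv_i$ becomes redundant once $u_i$ enters the colon, a short computation gives $I(G_{a,b}):c = (u_1,\ldots,u_{a-1},w_1,\ldots,w_{b-a+1})$, so $R/(I(G_{a,b}):c) \cong K[c,v_1,\ldots,v_{a-1}]$ is a polynomial ring of depth $a$. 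On the other hand $I(G_{a,b})+(c) = (c,u_1v_1,\ldots,u_{a-1}v_{a-1})$, and the quotient is isomorphic over $K$ to $\left(\bigotimes_{i=1}^{a-1} K[u_i,v_i]/(u_iv_i)\right) \otimes_K K[w_1,\ldots,w_{b-a+1}]$, which has depth $(a-1) + (b-a+1) = b$. The Depth Lemma \cite[Lemma 3.1.4]{V} applied to the middle term of the sequence gives $\depth R/I(G_{a,b}) \geq \min(a,b) = a$, and applied to the left term together with $a \leq b$ forces $\depth R/I(G_{a,b}) \leq a$, yielding equality.

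The main obstacle is the depth calculation, and the key choice is the central variable $c$: its colon ideal is generated exactly by the neighbours of $c$ (producing a polynomial ring in precisely $a$ variables), while the quotient by $(c)$ splits the remainder into $a-1$ disjoint edges together with $b-a+1$ isolated vertices (producing a Cohen--Macaulay piece of dimension $b$). The standing hypothesis $a \leq b$ is exactly what is needed to make both directions of the Depth Lemma tight and pin $\depth R/I(G_{a,b})$ down to $a$.
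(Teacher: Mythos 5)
Your proof is correct, but your witness graph is genuinely different from the paper's. The paper invokes Construction \ref{const:G(m;s)} with $G = G(a;1,\ldots,1,b-a+1)$ — a complete graph $K_a$ in which each vertex carries whiskers — and then simply cites Proposition \ref{G(m)}, whose depth formula is proved by induction on the size of the clique. Your witness is instead a spider tree: a center with $a-1$ legs of length two and $b-a+1$ pendants (the two graphs agree only for $a\leq 2$). The underlying depth technique is the same in spirit — form the short exact sequence coming from the colon and quotient by a single distinguished vertex, compute both ends, and squeeze the middle with both directions of the Depth Lemma \cite[Lemma 3.1.4]{V} — but yours runs in a single application because removing the closed neighbourhood of $c$ leaves no edges at all, whereas the paper's clique core forces an induction. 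What you gain is a self-contained, more elementary proof of Lemma \ref{DD} that bypasses Proposition \ref{G(m)} entirely; what the paper's choice buys is reusability: the family $G(m;s_1,\ldots,s_m)$ with unequal leaf counts is exactly what is needed later in Theorem \ref{LatticePointsDD} to realize pairs $(a,b)$ with $a+b>n$ on only $n$ vertices, a regime your spider cannot reach since it always occupies exactly $a+b$ vertices (and, combined with Lemma \ref{BasicLemma}, only covers $a+b\leq n$). One small point worth making explicit if this were written up: the upper bound $\depth(R/I(G_{a,b}))\leq a$ uses the form $\depth A\geq\min\{\depth B,\depth C+1\}$ of the Depth Lemma, which is indeed part of the cited statement and is the same two-sided use the paper makes inside Proposition \ref{G(m)} and Lemma \ref{S-suspension}.
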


In order to prove Lemma \ref{DD}, 
we introduce a new class of graphs. 
\begin{construction}
\label{const:G(m;s)}
  Let $m \geq 1$ and $1 \leq s_{1} \leq s_{2} \leq \cdots \leq s_{m}$ 
  be positive integers. 
  Then we define the graph $G(m; s_{1}, s_{2}, \ldots, s_{m})$ as follows: 
\begin{itemize}
	\item $\displaystyle V \left(G(m; s_{1}, s_{2}, \ldots, s_{m}) \right) = \{ v_{1}, \ldots, v_{m} \} \cup \bigcup_{i = 1}^{m} \left\{ x_{1}^{(i)}, \ldots, x_{s_{i}}^{(i)} \right\}$; 
	\item $E \left(G(m; s_{1}, s_{2}, \ldots, s_{m}) \right) = \left\{ \{v_{i}, v_{j}\} : 1 \leq i < j \leq m \right\}$ \\ 
	$$\ \ \ \ \ \ \ \ \ \ \ \ \ \cup \bigcup_{1 \leq k \leq m} \left\{ \{v_{k}, x_{1}^{(k)}\}, \ldots, \{v_{k}, x_{s_{k}}^{(k)}\} \right\}. $$
\end{itemize}
\end{construction}

Namely, the graph $G(m; s_{1}, s_{2}, \ldots, s_{m})$
is the finite connected graph consisting of 
the complete graph on $\{ v_{1}, \ldots, v_{m} \}$ 
such that each $v_i$ has $s_{i}$ leaf edges,
where a leaf edge is an edge having a vertex of degree one. 
Note that $G(1; s_{1})$ is a star graph. 

\begin{example}
Let $m = 3, s_{1} = 1, s_{2} = 2$ and $s_{3} = 3$. Then the graph $G(3; 1,2,3)$ is as in Figure \ref{fig:G(3; 1,2,3)}: 

\bigskip

  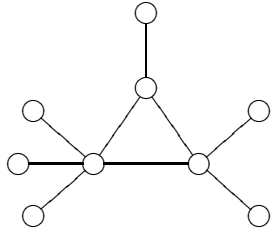
\begin{figure}[htbp]
  \centering
\begin{xy}
	\ar@{} (0, 0);(75, 0) *\cir<4pt>{} = "A";
	\ar@{-} "A";(68, -10) *\cir<4pt>{} = "B";
	\ar@{-} "A";(82, -10) *\cir<4pt>{} = "C";
	\ar@{-} "B";"C";
	\ar@{-} "A";(75, 10) *\cir<4pt>{};
	\ar@{-} "B";(58, -10) *\cir<4pt>{};
	\ar@{-} "B";(60, -3) *\cir<4pt>{};
	\ar@{-} "B";(60, -17) *\cir<4pt>{};
	\ar@{-} "C";(90, -3) *\cir<4pt>{};
	\ar@{-} "C";(90, -17) *\cir<4pt>{};
\end{xy}

\bigskip

  \caption{The graph $G(3; 1,2,3)$}
  \label{fig:G(3; 1,2,3)}
\end{figure}
\end{example}

We investigate the invariants for $G(m; s_1, s_2, \ldots, s_m)$. 

\begin{proposition}\label{G(m)}
Let $m \geq 1$ and $1 \leq s_{1} \leq s_{2} \leq \cdots \leq s_{m}$ be positive integers and 
$G = G(m; s_{1}, s_{2}, \ldots, s_{m})$ the graph as
in Construction \ref{const:G(m;s)}. Then $|V(G)| = m + \sum_{i = 1}^{m}s_{i}$ and 
\begin{enumerate}
  \item[$(i)$] $\displaystyle \dim R/I(G) = \sum_{i = 1}^{m}s_{i}$. 
  \item[$(ii)$] $\displaystyle {\rm depth} (R/I(G)) = 1 + \sum_{i = 1}^{m - 1}s_{i}$. 
\end{enumerate}
\end{proposition}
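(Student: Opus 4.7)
The plan is to prove Proposition \ref{G(m)} by induction on $m$, with the base case $m = 1$ handled by Proposition \ref{star} (then $G$ is the star graph with $s_1 + 1$ vertices). The vertex count $|V(G)| = m + \sum_{i=1}^m s_i$ is immediate from Construction \ref{const:G(m;s)}.

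For part (i), one argues combinatorially using Lemma \ref{dim}. The set $L = \bigcup_{i=1}^m \{x_1^{(i)}, \ldots, x_{s_i}^{(i)}\}$ of all leaves is an independent set of size $\sum_i s_i$. Conversely, any independent set $T \subseteq V(G)$ contains at most one hub $v_i$ (the hubs form a clique), and if $v_i \in T$ then $T$ excludes all $s_i$ leaves attached to $v_i$. Hence
\begin{displaymath}
  |T| \leq \max\!\bigl(\,\textstyle\sum_i s_i,\ \max_i (1 + \sum_{j \neq i} s_j)\,\bigr) = \sum_i s_i,
\end{displaymath}
since each $s_i \geq 1$. Together with Lemma \ref{dim}, this yields (i).

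For part (ii), the plan is to use the short exact sequence
\begin{displaymath}
  0 \to R/(I(G) : v_m)(-1) \xrightarrow{\cdot v_m} R/I(G) \to R/(I(G) + (v_m)) \to 0.
\end{displaymath}
Since $N_G(v_m) = \{v_1, \ldots, v_{m-1}, x_1^{(m)}, \ldots, x_{s_m}^{(m)}\}$, a direct computation on generators gives $(I(G) : v_m) = (v_1, \ldots, v_{m-1}, x_1^{(m)}, \ldots, x_{s_m}^{(m)})$, so $R/(I(G):v_m)$ is a polynomial ring in the $1 + \sum_{i=1}^{m-1} s_i$ remaining variables and its depth equals that count. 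On the other hand, $R/(I(G) + (v_m)) \cong K[V(G) \setminus \{v_m\}]/I(G - v_m)$, and the induced subgraph $G - v_m$ is the disjoint union of $G(m-1; s_1, \ldots, s_{m-1})$ with the $s_m$ isolated vertices $x_1^{(m)}, \ldots, x_{s_m}^{(m)}$. The disjoint-union (tensor product) formula for depth, combined with the inductive hypothesis applied to $G(m-1; s_1, \ldots, s_{m-1})$, yields depth $s_m + 1 + \sum_{i=1}^{m-2} s_i$.

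The hypothesis $s_{m-1} \leq s_m$ is exactly what forces
\begin{displaymath}
  \depth(R/(I(G):v_m)) = 1 + \sum_{i=1}^{m-1} s_i \ \leq \ s_m + 1 + \sum_{i=1}^{m-2} s_i = \depth(R/(I(G) + (v_m))).
\end{displaymath}
Applying the Depth Lemma (\cite[Lemma 3.1.4]{V}, as used in the proof of Lemma \ref{S-suspension}) to the short exact sequence then pins down $\depth(R/I(G))$ to equal $\depth(R/(I(G):v_m)) = 1 + \sum_{i=1}^{m-1} s_i$, giving (ii). The main obstacle is ensuring that the Depth Lemma delivers equality and not merely an inequality; this is guaranteed precisely because the monotonicity $s_1 \leq \cdots \leq s_m$ places the depth of the colon quotient below that of the quotient by $v_m$, so that the first end of the sequence dictates $\depth(R/I(G))$.
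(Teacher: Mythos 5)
Your proposal is correct and follows essentially the same route as the paper's own proof: the same independent-set bound via Lemma \ref{dim} for part (i), and for part (ii) the same induction on $m$ using the short exact sequence built from $v_m$, the computation of $(I(G):v_m)$ and $I(G)+(v_m)$, the monotonicity $s_{m-1}\leq s_m$, and the Depth Lemma to force equality. Nothing substantive differs.
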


\begin{proof}
  In order to prove (i), we use Lemma \ref{dim}.
  Suppose that there exists an independent set $S \subseteq V(G)$ of $G$
  with $|S| > \sum_{i=1}^{m}s_{i}$. 
  Since $|S| > \sum_{i=1}^{m}s_{i}$,
  one of $v_1, \ldots, v_m$ is contained in $S$.
  We first assume that $v_{1} \in S$. 
  As $S$ is an independent set of $G$, one has
  $x^{(1)}_{1}, \ldots, x^{(1)}_{s_{1}}, v_{2}, \ldots, v_{m} \not\in S$.
  Then $|S| \leq |V(G)| - s_{1} - (m - 1) = \sum_{i=1}^{m}s_{i} - s_{1} + 1
  \leq \sum_{i=1}^{m}s_{i}$, a contradiction.
  Similarly we have a contradiction if $v_i \in S$ for $i = 2, \ldots, m$. 
  Hence there is no independent set with cardinality more than
  $\sum_{i=1}^{m}s_{i}$. 
  Because $V(G) \setminus \{v_{1}, \ldots, v_{m}\}$ is an independent set
  of $G$ of size $\sum_{i=1}^m s_i$,  we have
  $\dim R/I(G) = \sum_{i=1}^{m}s_{i}$ by virtue of Lemma \ref{dim}. 

  \par
  We prove (ii) by induction on $m$. 
  If $m = 1$, then $G$ is a star graph
  joining $s_{1}$ paths of length one at one common vertex $v_{1}$.
  Hence ${\rm depth}(R/I(G)) = 1$ by Proposition \ref{star}. 

Next, we assume that $m > 1$. Then one has 
\begin{eqnarray*}
I(G) + (v_{m}) &=& (v_{m}) + I(G(m-1; s_{1}, s_{2}, \ldots, s_{m - 1})), \\
I(G) : (v_{m}) &=& (v_{1}, v_{2}, \ldots, v_{m - 1}) + \left(x_{1}^{(m)}, x_{2}^{(m)}, \ldots, x_{s_{m}}^{(m)}\right). 
\end{eqnarray*}
Hence it follows that 
\begin{eqnarray*}
& & R/(I(G) + (v_{m})) \\ 
&\cong& K\left[x_{j}^{(m)} : 1 \leq j \leq s_{m}\right] \otimes_{K} K\left[V(G(m-1; s_{1}, \ldots, s_{m - 1}))\right] / I(G(m-1; s_{1}, \ldots, s_{m - 1})),
\end{eqnarray*}
and
\[
R/(I(G) : (v_{m})) 
\cong \bigotimes_{i = 1}^{m - 1} K\left[x_{j}^{(i)} : 1 \leq j \leq s_{i}\right] \otimes_{K} K[v_{m}]. 
\]
Thus by induction we have
\begin{eqnarray*}
{\rm depth}(R/(I(G) + (v_{m}))) &=& 1 + \sum_{i = 1}^{m - 2}s_{i} + s_{m}, ~~\mbox{and}\\
{\rm depth}(R/(I(G) : (v_{m}))) &=& 1 + \sum_{i = 1}^{m - 1}s_{i}. 
\end{eqnarray*}
Hence ${\rm depth}(R/I(G) + (v_{m})) \geq {\rm depth}(R/I(G) : (v_{m}))$ since $s_{m} \geq s_{m - 1}$. 
Thus, by applying the Depth Lemma (\cite[Lemma 3.1.4]{V}) to the short exact sequence
\[
0 \to R/(I(G) : (v_{m})) \ (-1) \xrightarrow{\times v_{m}} R/I(G) \to R/(I(G) + (v_{m})) \to 0, 
\]
one has $\displaystyle {\rm depth}(R/I(G)) = {\rm depth}(R/I(G) : (v_{m})) = 1 + \sum_{i = 1}^{m - 1}s_{i}$. 
\end{proof}

Lemma \ref{DD} is now easily derived by using the graph
in Construction \ref{const:G(m;s)}. 

\begin{proof}[{Proof of Lemma \ref{DD}}]
  The graph $G = G(a; 1,1, \ldots, 1, b - a + 1)$ has $|V(G)| = a + b$.
  Then Proposition \ref{G(m)} says that 
${\rm depth}(R/I(G)) = a$ and $\dim R/I(G) = b$. 
\end{proof}

Lemmas \ref{BasicLemma} and \ref{DD} imply
that for any pair $(a,b)$ of integers with
$1 \leq a \leq b$, one has $(a,b) \in {\rm Graph}_{{\rm depth}, \dim}(n)$
for all $n \gg 0$. However for fixed $n$, we have the following theorem.
Note that when $G \in {\rm Graph}_{{\rm depth},\dim} (n)$ has an edge,
then $n \geq 2$ and $\dim R/I(G) \leq n-1$.

\begin{theorem}\label{LatticePointsDD}
  Let $1 \leq a \leq b$ and $n \geq 2$ be integers. 
  Assume that $b \leq n - 1$.
  If $a \leq b + 1 - \left\lceil\frac{b}{n - b}\right\rceil$,
  then $(a, b) \in {\rm Graph}_{{\rm depth}, \dim}(n)$.
\end{theorem}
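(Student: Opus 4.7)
The plan is to exhibit, for every pair $(a, b)$ satisfying the hypothesis, a concrete finite simple connected graph $G$ on exactly $n$ vertices with $\depth(R/I(G)) = a$ and $\dim R/I(G) = b$. Every such $G$ will be an instance of Construction \ref{const:G(m;s)} (possibly after iterating the $S$-suspension), whose invariants are handed to us by Proposition \ref{G(m)}. The argument splits into two regimes according to whether $a + b \leq n$ or $a + b > n$.

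If $a + b \leq n$, I would simply invoke Lemma \ref{DD} to place $(a, b)$ in ${\rm Graph}_{{\rm depth}, \dim}(a+b)$ and then apply Lemma \ref{BasicLemma} a total of $n - a - b$ times to lift this to ${\rm Graph}_{{\rm depth}, \dim}(n)$; in this regime the ceiling term in the hypothesis plays no role. The interesting case is $a + b > n$, equivalently $a \geq n - b + 1$. Here I would set $m := n - b$ (so $m \geq 1$ by the hypothesis $b \leq n - 1$) and $s_m := b - a + 1$ (positive since $a \leq b$), and then seek positive integers $s_1 \leq \cdots \leq s_{m-1} \leq s_m$ whose sum equals $a - 1 = b - s_m$. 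Once such a choice is produced, $G := G(m; s_1, \ldots, s_m)$ has $|V(G)| = m + b = n$, and Proposition \ref{G(m)} immediately yields $\dim R/I(G) = b$ and $\depth(R/I(G)) = 1 + (a - 1) = a$, as required.

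The crucial step, and the place where the exact shape of the hypothesis enters, is verifying the existence of the $s_i$; this reduces to the double inequality
\[
m - 1 \leq a - 1 \leq (m-1)(b - a + 1).
\]
The lower bound is immediate from $a \geq m + 1$. For the upper bound I would rewrite the hypothesis $a \leq b + 1 - \lceil b/(n-b) \rceil$ as $(n - b)(b - a + 1) \geq b$ and subtract $(b - a + 1)$ from both sides, obtaining exactly $(m - 1)(b - a + 1) \geq a - 1$. The only mild subtlety is the degenerate case $m = 1$, in which the hypothesis forces $b = n - 1$ and $a = 1$ so that $G$ collapses to the star graph already handled by Proposition \ref{star}; in every other case the inequality above gives enough slack to fill the bins $s_1, \ldots, s_{m-1}$, for instance by distributing $a - 1$ as evenly as possible and then sorting.
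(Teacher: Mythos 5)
Your proof is correct and takes essentially the same route as the paper: the case $a+b\leq n$ is handled by Lemmas \ref{BasicLemma} and \ref{DD}, and the case $a+b>n$ by the graph $G(n-b;s_1,\ldots,s_{n-b})$ of Construction \ref{const:G(m;s)} with $s_{n-b}=b-a+1$ and the remaining $s_i$ distributing $a-1$ as evenly as possible, which is precisely the paper's construction. Your verification of the key inequality via the equivalence $a \leq b+1-\left\lceil\frac{b}{n-b}\right\rceil \iff (n-b)(b-a+1)\geq b$ is a slightly cleaner bookkeeping than the paper's chain of ceiling estimates, but it establishes the same bound $(n-b-1)(b-a+1)\geq a-1$ that makes the construction work.
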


\begin{proof}
  Assume that $n - b \geq a$. Then $a + b \leq n$. 
  By virtue of Lemma \ref{BasicLemma} together with Lemma \ref{DD},
  we have $(a, b) \in {\rm Graph}_{{\rm depth}, \dim}(a + b)
  \subseteq {\rm Graph}_{{\rm depth}, \dim}(n)$.
    
  \par
  Next, we assume that $n - b < a$.
  Then $b<n-1$ because if $b=n-1$, then $n-b < a$ implies $1<a$, and
  $a \leq b + 1 - \left\lceil\frac{b}{n - b}\right\rceil$ implies $a \leq 1$,
  a contradiction. 
  

  \par
  We write $a-1 = q (n-b-1) + t$ where $q$ and $t$
  are integers and $0 \leq t < n-b-1$.
  Note that $q \geq 1$.
  If $t = 0$, then we set $s_1 = \cdots = s_{n-b-1} = q$. 
  If $t \neq 0$, then we set $s_1 = \cdots = s_{n-b-1-t} = q$
  and $s_{n-b-t} = \cdots = s_{n-b-1} = q+1$.
  We note that in each case one has $\sum_{i = 1}^{n - b - 1} s_{i} = a - 1$.
  Also set $s_{n-b} = b-a+1$. 
  We claim that $s_{n-b} \geq s_{n-b-1}$. Indeed
  since
  \begin{displaymath}
    a \leq b+1 - \left\lceil\frac{b}{n - b}\right\rceil
    \leq b+1 - \frac{b}{n-b}, 
  \end{displaymath}
  one has $a-1 \leq b(n-b-1)/(n-b)$. Hence $b/(n-b) \geq (a-1)/(n-b-1)$.
  Therefore 
  \begin{displaymath}
      s_{n - b}
      = b - a + 1 \geq \left\lceil\frac{b}{n - b}\right\rceil
    \geq \left\lceil\frac{a - 1}{n - b - 1}\right\rceil = s_{n-b-1}. 
  \end{displaymath}
  Let us consider the 
  graph $G = G(n - b; s_{1}, \ldots, s_{n - b - 1}, s_{n - b})$. 
  Then $|V(G)| = n$, and Proposition \ref{G(m)} says that
  ${\rm depth}(R/I(G)) = a$ and $\dim R/I(G) = b$.
  Hence $(a, b) \in {\rm Graph}_{{\rm depth}, \dim}(n)$.

\end{proof}

\par
For $n \geq 3$, we set
\[
C^{-}(n) := \{ (1,n-1) \} \cup
\left\{ (a, b) \in \mathbb{N}^2 \ \middle| \
a \leq b, \, 1 \leq a \leq \left\lfloor \frac{n}{2} \right\rfloor,
\, 1 \leq b \leq n - 2 \right\} \subseteq \mathbb{N}^{2}
\]
and
\[
C^{+}(n) := \left\{ (a, b) \in \mathbb{N}^2 \ \middle| \
1 \leq a \leq b \leq n - 1 \right\} \subseteq \mathbb{N}^{2}. 
\]
The following theorem is the main result of this section, which says 
that the set ${\rm Graph}_{{\rm depth}, \dim}(n)$ is sandwiched
by these convex lattice polytopes. 
\begin{theorem}
\label{maintheorem1}
For all $n \geq 3$, $C^{-}(n) \subseteq {\rm Graph}_{{\rm depth}, \dim}(n) \subseteq C^{+}(n)$. 
\end{theorem}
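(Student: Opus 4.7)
The plan is to establish the two containments separately, handling the upper bound by a short verification and reducing the lower bound, via Theorem \ref{LatticePointsDD}, to a purely arithmetic inequality.

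For ${\rm Graph}_{{\rm depth}, \dim}(n) \subseteq C^{+}(n)$, I would observe that for any $G \in {\rm Graph}(n)$ with $n \geq 3$, the standard chain $1 \leq \depth(R/I(G)) \leq \dim R/I(G)$ holds, while $\dim R/I(G) \leq n-1$ follows from Lemma \ref{dim}: since $G$ is connected on at least three vertices it has an edge, so $V(G)$ cannot be independent. The bound $\depth(R/I(G)) \geq 1$ holds because $I(G)$ is a squarefree monomial ideal (so $R/I(G)$ has no embedded associated primes) and the graded maximal ideal $\mathfrak{m}$ of $R$ is never a minimal vertex cover of $G$ when $G$ has an edge.

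For $C^{-}(n) \subseteq {\rm Graph}_{{\rm depth}, \dim}(n)$, the point $(1, n-1)$ is immediate from Lemma \ref{(1,n-1)}. For any remaining $(a,b) \in C^{-}(n)$ one has $1 \leq a \leq \lfloor n/2 \rfloor$ and $a \leq b \leq n-2$, and the plan is to verify the hypothesis $a \leq b + 1 - \lceil b/(n-b) \rceil$ of Theorem \ref{LatticePointsDD}. If $b \leq n-b$, then $0 < b/(n-b) \leq 1$, so $\lceil b/(n-b) \rceil = 1$ and the hypothesis reduces to $a \leq b$, which is given. If $b > n-b$, using $a \leq \lfloor n/2 \rfloor$ it suffices to prove $\lfloor n/2 \rfloor + \lceil b/(n-b) \rceil \leq b+1$; substituting $k := n-b$ and using the identity $\lceil (n-k)/k \rceil = \lceil n/k \rceil - 1$ reformulates this as
\begin{displaymath}
k + \lceil n/k \rceil \leq \lceil n/2 \rceil + 2, \qquad 2 \leq k \leq \lceil n/2 \rceil - 1.
\end{displaymath}

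The main obstacle is this arithmetic inequality. I would rewrite it, via $\lceil n/k \rceil = 1 + \lfloor (n-1)/k \rfloor$, as $k + \lfloor m/k \rfloor \leq \lfloor m/2 \rfloor + 2$ with $m := n-1$, and split on $q := \lfloor m/k \rfloor$. When $q \geq 2$, the factorization $qk - 2(k+q) + 4 = (q-2)(k-2) \geq 0$ gives $m \geq qk \geq 2(k+q) - 4$, whence $k + q \leq \lfloor m/2 \rfloor + 2$. When $q = 1$, one has $k \leq m < 2k$, and the range constraint $k \leq \lceil n/2 \rceil - 1 \leq \lceil m/2 \rceil$ forces $m \geq 2k-1$, which directly yields $k \leq \lfloor m/2 \rfloor + 1$. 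With the arithmetic inequality in hand, Theorem \ref{LatticePointsDD} supplies a connected graph on $n$ vertices with depth $a$ and dimension $b$, completing the proof.
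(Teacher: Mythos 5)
Your proposal is correct, and while it shares the paper's skeleton---dispose of $(1,n-1)$ via Lemma \ref{(1,n-1)}, get ${\rm Graph}_{{\rm depth}, \dim}(n) \subseteq C^{+}(n)$ from $1 \leq \depth(R/I(G)) \leq \dim R/I(G) \leq n-1$, and realize the remaining points through Theorem \ref{LatticePointsDD}---the way you verify the hypothesis of Theorem \ref{LatticePointsDD} is genuinely different. The paper first splits on whether $a+b \leq n$: in that regime it bypasses Theorem \ref{LatticePointsDD} and quotes Lemmas \ref{BasicLemma} and \ref{DD} directly, and only when $a+b \geq n+1$ (which forces $\lceil n/2\rceil + 1 \leq b \leq n-2$ and $n \geq 6$) does it check $a \leq b+1-\lceil b/(n-b)\rceil$, via Lemma \ref{(*)}, whose proof is a parity case analysis on $n$ resting on positivity of the quadratics $-2c^{2}+(n-4)c$ and $-2c^{2}+(n-7)c+n-3$. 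You instead check the hypothesis uniformly for every $(a,b) \in C^{-}(n)\setminus\{(1,n-1)\}$, splitting on $b \leq n/2$ (where $\lceil b/(n-b)\rceil = 1$ reduces the hypothesis to $a \leq b$) versus $b > n/2$, and in the latter case you replace Lemma \ref{(*)} by the inequality $k + \lceil n/k\rceil \leq \lceil n/2 \rceil + 2$ for $2 \leq k \leq \lceil n/2 \rceil -1$, proved via $\lceil n/k \rceil = 1 + \lfloor (n-1)/k \rfloor$ and the factorization $(q-2)(k-2) \geq 0$ with $q = \lfloor (n-1)/k \rfloor$. I verified your floor/ceiling identities and both cases; they are sound (one small remark: in your $q=1$ case the range $k \leq \lfloor (n-1)/2 \rfloor$ already forces $q \geq 2$, so that case is vacuous, but your treatment of it is correct and harmless). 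Your route buys a shorter, parity-free argument that subsumes the $a+b \leq n$ regime into a single verification, with Lemmas \ref{BasicLemma} and \ref{DD} entering only implicitly through Theorem \ref{LatticePointsDD}; the paper's route keeps the $a+b \leq n$ case entirely arithmetic-free and, through the explicit quadratics, makes more visible where the threshold $\lfloor n/2 \rfloor$ defining $C^{-}(n)$ becomes tight.
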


We use the following lemma to prove Theorem \ref{maintheorem1}. 
\begin{lemma}
\label{(*)}
Let $n \geq 6$ be an integer. 
If $\left\lceil\frac{n}{2}\right\rceil + 1 \leq b \leq n - 2$, then
\begin{equation}\label{inequality}
    b + 1 - \left\lceil\frac{b}{n - b}\right\rceil \geq \left\lfloor \frac{n}{2} \right\rfloor.
\end{equation}
\end{lemma}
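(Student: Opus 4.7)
The plan is to clear the ceiling and then split by the parity of $n$. Since $b+1-\lfloor n/2\rfloor$ is an integer, the target inequality~\eqref{inequality} is equivalent to $\lceil b/(n-b)\rceil\leq b+1-\lfloor n/2\rfloor$, and using the standard fact that $\lceil x\rceil\leq M$ iff $x\leq M$ when $M\in\ZZ$, this is in turn equivalent to the plain inequality $b/(n-b)\leq b+1-\lfloor n/2\rfloor$. Multiplying through by $n-b\geq 2>0$ (we have $b\leq n-2$) converts the statement into the polynomial inequality
\[
(n-b)\left(b-\left\lfloor \tfrac{n}{2}\right\rfloor\right) \;\geq\; 2b - n.
\]
I would then set $m=\lfloor n/2\rfloor$ and substitute $u=b-m$, which repackages the hypothesis as clean bounds on $u$ in each parity of $n$ and reduces the task to a short arithmetic check.

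In the even case $n=2m$, the hypothesis $m+1\leq b\leq 2m-2$ becomes $1\leq u\leq m-2$; since $n-b=m-u$ and $2b-n=2u$, the displayed inequality reduces to $u(m-u-2)\geq 0$, which holds because both factors are nonnegative. In the odd case $n=2m+1$, the hypothesis $m+2\leq b\leq 2m-1$ becomes $2\leq u\leq m-1$; since $n-b=m+1-u$ and $2b-n=2u-1$, the displayed inequality reduces to $u(m-1-u)\geq -1$, which again holds since $u\leq m-1$ forces $u(m-1-u)\geq 0$.

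The hypothesis $n\geq 6$ is precisely what makes the range of admissible $b$ nonempty, since $\lceil n/2\rceil+1\leq n-2$ is equivalent to $\lfloor n/2\rfloor\geq 3$. The only slightly delicate step in the plan is the removal of the ceiling, which relies on the integrality of $b+1-\lfloor n/2\rfloor$, together with the parity split needed to control the floor; once these are in place, the inequality becomes a transparent nonnegativity statement for a product of two nonnegative integers, and no further estimation is required.
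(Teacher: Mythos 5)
Your proof is correct, and every step checks out: the reduction $\lceil x\rceil\leq M\iff x\leq M$ for integer $M$ is valid, the polynomial inequality $(n-b)\left(b-\left\lfloor \tfrac{n}{2}\right\rfloor\right)\geq 2b-n$ is the correct clearing of denominators, and the parity-split verifications ($u(m-u-2)\geq 0$ for $n=2m$ with $1\leq u\leq m-2$, and $u(m-1-u)\geq -1$ for $n=2m+1$ with $2\leq u\leq m-1$) are accurate.

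Your route differs from the paper's in a meaningful way, even though both split by parity and ultimately hinge on the same quadratic (your factored expressions agree with the paper's quadratics $-2c^2+(n-4)c$ and $-2c^2+(n-7)c+n-3$ up to a factor of $2$ and the shift $c=b-\lceil n/2\rceil$). The paper keeps the ceiling in place throughout: it substitutes $b=\lceil n/2\rceil+c$, converts $\lceil\,\cdot\,\rceil$ into a floor, absorbs the integer terms into a single floor of a rational expression, and then proves the numerator nonnegative by analyzing the quadratic function $f(x)$ --- invoking its concavity and evaluating it at the endpoints of the admissible interval. You instead discharge the ceiling at the very start via the integer-threshold equivalence, which turns the whole statement into a plain polynomial inequality; after the substitution $u=b-\lfloor n/2\rfloor$ this factors into a product of manifestly nonnegative integers, so no function analysis is needed at all. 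What your approach buys is economy and transparency: the convexity/endpoint argument disappears, and the inequality is visibly a triviality about nonnegative factors. What the paper's approach buys is mainly uniformity of bookkeeping --- it never leaves the world of floor/ceiling identities --- but at the cost of a heavier computation. Your version is the simpler of the two.
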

\begin{proof}
  Set $b = \left\lceil\frac{n}{2}\right\rceil + c$. 
  Then $1 \leq c \leq \left\lfloor \frac{n}{2} \right\rfloor - 2$, and 
  \begin{displaymath}
    \begin{aligned}
      b + 1 - \left\lceil\frac{b}{n - b}\right\rceil
      &= \left\lceil \frac{n}{2} \right\rceil + c + 1
      - \left\lceil \frac{ \left\lceil \frac{n}{2} \right\rceil + c}{\left\lfloor \frac{n}{2} \right\rfloor - c} \right\rceil \\
      &= \left\{
      \begin{alignedat}{3}
        &\frac{n}{2} + c + 1
         - \left\lceil \frac{n+2c}{n-2c} \right\rceil,
         &\quad &\text{if $n$ is even}, \\
        &\frac{n+1}{2} + c + 1
         - \left\lceil \frac{n+1+2c}{n-1-2c} \right\rceil,
        &\quad &\text{if $n$ is odd}. 
      \end{alignedat}
      \right.
    \end{aligned}
  \end{displaymath}
  First, we assume that $n$ is even. Then
  \begin{displaymath}
    \begin{aligned}
      b + 1 - \left\lceil\frac{b}{n - b}\right\rceil
      - \left\lfloor\frac{n}{2}\right\rfloor 
      &= \frac{n}{2} + c+1 - \left\lceil\frac{n+2c}{n-2c}\right\rceil
      - \frac{n}{2} \\
      &= c+1 + \left\lfloor -\frac{n+2c}{n-2c} \right\rfloor \\
      &= \left\lfloor \frac{(c+1)(n-2c)-(n+2c)}{n-2c} \right\rfloor \\
      &= \left\lfloor \frac{-2c^2 + (n-4)c}{n-2c} \right\rfloor. 
    \end{aligned}
  \end{displaymath}
  Since $n-2c>0$, in order to prove (\ref{inequality}),
  it is sufficient to show $-2c^2 + (n-4)c \geq 0$. 

  \par
  Consider the function $f(x) = -2x^{2} + (n - 4)x$.
  Then $f(x)$ is convex-upward and
  $f(x) \geq 0$ for all $0 \leq x \leq (n-4)/2$.
  Since $0 < 1 \leq c \leq \left\lfloor \frac{n}{2} \right\rfloor - 2
  = (n-4)/2$, we have $f (c) \geq 0$, as desired.  

  \par
  Next, we assume that $n$ is odd. Then
  \begin{displaymath}
    \begin{aligned}
      b + 1 - \left\lceil\frac{b}{n - b}\right\rceil
      - \left\lfloor\frac{n}{2}\right\rfloor 
      &= \frac{n+1}{2} + c+1 - \left\lceil\frac{n+1+2c}{n-1-2c}\right\rceil
      - \frac{n-1}{2} \\
      &= c+2 + \left\lfloor -\frac{n+1+2c}{n-1-2c} \right\rfloor \\
      &= \left\lfloor \frac{(c+2)(n-1-2c)-(n+1+2c)}{n-1-2c} \right\rfloor \\
      &= \left\lfloor \frac{-2c^2 + (n-7)c + n-3}{n-1-2c} \right\rfloor. 
    \end{aligned}
  \end{displaymath}
  Since $n-1-2c>0$, in order to prove (\ref{inequality}), 
  it is sufficient to show $-2c^2 + (n-7)c + n-3 \geq 0$. 

  \par
  Consider the function $f(x) = -2x^{2} + (n - 7)x + n-3$.
  Then $f(x)$ is convex-upward. Also $f(1) = 2n - 12 > 0$ and
  $f(\left\lfloor \frac{n}{2} \right\rfloor - 2) = f(\frac{n - 5}{2}) = 2$
  imply $f(x) \geq 0$ for all
  $1 \leq x \leq \left\lfloor \frac{n}{2} \right\rfloor - 2$.
  Since $0 < 1 \leq c \leq \left\lfloor \frac{n}{2} \right\rfloor - 2$,
  we have $f (c) \geq 0$, as desired. 

\end{proof}

Now we are in position to prove Theorem \ref{maintheorem1}. 
\begin{proof}[{Proof of Theorem \ref{maintheorem1}}]
  Take $(a, b) \in C^{-}(n)$.
  Since we know $(1, n-1) \in {\rm Graph}_{{\rm depth}, \dim} (n)$
  by Lemma \ref{(1,n-1)}, we may assume $(a,b) \neq (1, n-1)$.   
  Then $a \leq b$, $1 \leq a \leq \left\lfloor \frac{n}{2} \right\rfloor$
  and $1 \leq b \leq n - 2$. 
  If $a + b \leq n$, then $(a, b) \in {\rm Graph}_{{\rm depth}, \dim}(a + b) \subseteq {\rm Graph}_{{\rm depth}, \dim}(n)$ by virtue of Lemmas \ref{BasicLemma} and
  \ref{DD}. 
  
  \par
  Assume that $a + b \geq n + 1$.
  Since $a \leq \left\lfloor \frac{n}{2} \right\rfloor$ and $b \leq n - 2$,
  one has $\left\lceil \frac{n}{2} \right\rceil + 1 \leq b \leq n - 2$. 
  It then also follows that $n \geq 6$. 
  By virtue of Lemma \ref{(*)},
  we have $b + 1 - \left\lceil\frac{b}{n - b}\right\rceil
  \geq \left\lfloor \frac{n}{2} \right\rfloor \geq a$. 
  Thus one has $(a, b) \in {\rm Graph}_{{\rm depth}, \dim}(n)$
  by Theorem \ref{LatticePointsDD}. 
  Therefore we have $C^{-}(n) \subseteq {\rm Graph}_{{\rm depth}, \dim}(n)$.

  \par
  It is easy to see that ${\rm Graph}_{{\rm depth}, \dim}(n) \subseteq C^{+}(n)$
  since $1 \leq {\rm depth}(R/I(G)) \leq \dim R/I(G) \leq n - 1$
  for all $G \in {\rm Graph}(n)$. 
\end{proof}

We compare $C^{-} (n)$ or $C^{+} (n)$ with ${\rm Graph}_{{\rm depth}, \dim}(n)$ 
for small $n$. 
\begin{example}\label{DD(456789)}
  Using Macaulay2 \cite{GS}, we computed ${\rm Graph}_{{\rm depth},\dim}(n)$
  for $n=4,\ldots,9$.  The results of these computations are summarized
  in Figures \ref{figure456} and \ref{figure789}.
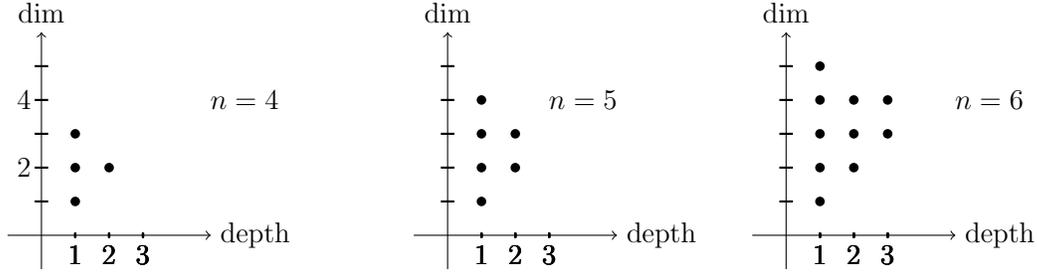
\begin{figure}
\begin{tikzpicture}[thick,scale=0.9, every node/.style={scale=0.9}]
\draw[thin,->] (-4.5,0) -- (-1.5,0) node[right] {${\rm depth}$};
\draw[thin,->] (-4,-0.5) -- (-4,3.0) node[above] {$\dim$};

\foreach \x [count=\xi starting from 0] in {1,2,3,4,5}{
        \draw (-4.1,\x/2) -- (-3.9,\x/2);
    \ifodd\xi
        \node[anchor=east] at (-4,\x/2) {$\x$};
    \fi
 \draw (1/2-4,1pt) -- (1/2-4,-1pt);
 \draw (2/2-4,1pt) -- (2/2-4,-1pt);
 \draw (3/2-4,1pt) -- (3/2-4,-1pt);
     \node[anchor=north] at (1/2-4,0) {$1$};
     \node[anchor=north] at (2/2-4,0) {$2$};
     \node[anchor=north] at (3/2-4,0) {$3$};
}

\foreach \point in {(1/2-4,1/2),(1/2-4,2/2),(1/2-4,3/2),
  (2/2-4,2/2)
}{
    \fill \point circle (2pt);
}

\draw[thin,->] (5.5-4,0) -- (8.5-4,0) node[right] {${\rm depth}$};
\draw[thin,->] (6-4,-0.5) -- (6-4,3.0) node[above] {$\dim$};

\foreach \x [count=\xi starting from 0] in {1,2,3,4,5}{
    \draw (5.9-4,\x/2) -- (6.1-4,\x/2);
    \ifodd\xi
    \fi
\draw (6+1/2-4,1pt) -- (6+1/2-4,-1pt);
 \draw (6+2/2-4,1pt) -- (6+2/2-4,-1pt);
 \draw (6+3/2-4,1pt) -- (6+3/2-4,-1pt);
     \node[anchor=north] at (6+1/2-4,0) {$1$};
     \node[anchor=north] at (6+2/2-4,0) {$2$};
     \node[anchor=north] at (6+3/2-4,0) {$3$};
}

\
\foreach \point in {(6+1/2-4,1/2),(6+1/2-4,2/2),(6+1/2-4,3/2),(6+1/2-4,4/2),
  (6+2/2-4,2/2),(6+2/2-4,3/2)
}
  {
    \fill \point circle (2pt);
}

\draw[thin,->] (5.5+1,0) -- (8.5+1,0) node[right] {${\rm depth}$};
\draw[thin,->] (6+1,-0.5) -- (6+1,3.0) node[above] {$\dim$};

\foreach \x [count=\xi starting from 0] in {1,2,3,4,5}{
    \draw (5.9+1,\x/2) -- (6.1+1,\x/2);
    \ifodd\xi
    \fi
\draw (6+1/2+1,1pt) -- (6+1/2+1,-1pt);
 \draw (6+2/2+1,1pt) -- (6+2/2+1,-1pt);
 \draw (6+3/2+1,1pt) -- (6+3/2+1,-1pt);
     \node[anchor=north] at (6+1/2+1,0) {$1$};
     \node[anchor=north] at (6+2/2+1,0) {$2$};
     \node[anchor=north] at (6+3/2+1,0) {$3$};
}

\
\foreach \point in {(6+1/2+1,1/2),(6+1/2+1,2/2),(6+1/2+1,3/2),(6+1/2+1,4/2),
  (6+1/2+1,5/2),
  (6+2/2+1,2/2),(6+2/2+1,3/2),(6+2/2+1,4/2),
  (6+3/2+1,3/2),(6+3/2+1,4/2)
}
  {
    \fill \point circle (2pt);
  }
  
\node at (3-4,2) {$n=4$};
\node at (8-4,2) {$n=5$};
\node at (8+2,2) {$n=6$};
\end{tikzpicture}
\caption{${\rm Graph}_{{\rm depth},\dim}(n)$ for
  $n=4,5,6$}\label{figure456}
\end{figure}


\begin{figure}
\begin{tikzpicture}[thick,scale=0.9, every node/.style={scale=0.9}]
\draw[thin,->] (-4.5,0) -- (-1.5,0) node[right] {${\rm depth}$};
\draw[thin,->] (-4,-0.5) -- (-4,4.5) node[above] {$\dim$};

\foreach \x [count=\xi starting from 0] in {1,2,3,4,5,6,7,8}{
        \draw (-4.1,\x/2) -- (-3.9,\x/2);
        \ifodd\xi
        \node[anchor=east] at (-4,\x/2) {$\x$};
        \fi
        \draw (1/2-4,1pt) -- (1/2-4,-1pt);
        \draw (2/2-4,1pt) -- (2/2-4,-1pt);
        \draw (3/2-4,1pt) -- (3/2-4,-1pt);
        \draw (4/2-4,1pt) -- (4/2-4,-1pt);
        \node[anchor=north] at (1/2-4,0) {$1$};
        \node[anchor=north] at (2/2-4,0) {$2$};
        \node[anchor=north] at (3/2-4,0) {$3$};
        \node[anchor=north] at (4/2-4,0) {$4$};
}

\foreach \point in {
  (1/2-4,1/2),
  (1/2-4,2/2),
  (1/2-4,3/2),
  (1/2-4,4/2),
  (1/2-4,5/2),
  (1/2-4,6/2),
  (2/2-4,2/2),
  (2/2-4,3/2),
  (2/2-4,4/2),
  (2/2-4,5/2),
  (3/2-4,3/2),
  (3/2-4,4/2),
  (3/2-4,5/2)
}{
  \fill \point circle (2pt);}

\draw[thin,->] (5.5-4,0) -- (8.5-4,0) node[right] {${\rm depth}$};
\draw[thin,->] (6-4,-0.5) -- (6-4,4.5) node[above] {$\dim$};

\foreach \x [count=\xi starting from 0] in {1,2,3,4,5,6,7,8}{
  \draw (5.9-4,\x/2) -- (6.1-4,\x/2);
    \ifodd\xi
    \fi

    \draw (6+1/2-4,1pt) -- (6+1/2-4,-1pt);
    \draw (6+2/2-4,1pt) -- (6+2/2-4,-1pt);
    \draw (6+3/2-4,1pt) -- (6+3/2-4,-1pt);
    \draw (6+4/2-4,1pt) -- (6+4/2-4,-1pt);
    \node[anchor=north] at (6+1/2-4,0) {$1$};
    \node[anchor=north] at (6+2/2-4,0) {$2$};
    \node[anchor=north] at (6+3/2-4,0) {$3$};
    \node[anchor=north] at (6+4/2-4,0) {$4$};
}

\
\foreach \point in {
  (6+1/2-4,1/2),
  (6+1/2-4,2/2),
  (6+1/2-4,3/2),
  (6+1/2-4,4/2),
  (6+1/2-4,5/2),
  (6+1/2-4,6/2),
  (6+1/2-4,7/2),
  (6+2/2-4,2/2),
  (6+2/2-4,3/2),
  (6+2/2-4,4/2),
  (6+2/2-4,5/2),
  (6+2/2-4,6/2),
  (6+3/2-4,3/2),
  (6+3/2-4,4/2),
  (6+3/2-4,5/2),
  (6+3/2-4,6/2),
  (6+4/2-4,4/2),
  (6+4/2-4,5/2),
  (6+4/2-4,6/2)
}
  {
    \fill \point circle (2pt);
}

\draw[thin,->] (5.5+1,0) -- (8.5+1.5,0) node[right] {${\rm depth}$};
\draw[thin,->] (6+1,-0.5) -- (6+1,4.5) node[above] {$\dim$};

\foreach \x [count=\xi starting from 0] in {1,2,3,4,5,6,7,8}{
    \draw (5.9+1,\x/2) -- (6.1+1,\x/2);
    \ifodd\xi
    \fi

    \draw (6+1/2+1,1pt) -- (6+1/2+1,-1pt);
    \draw (6+2/2+1,1pt) -- (6+2/2+1,-1pt);
    \draw (6+3/2+1,1pt) -- (6+3/2+1,-1pt);
    \draw (6+4/2+1,1pt) -- (6+4/2+1,-1pt);
    \draw (6+5/2+1,1pt) -- (6+5/2+1,-1pt);
    \node[anchor=north] at (6+1/2+1,0) {$1$};
    \node[anchor=north] at (6+2/2+1,0) {$2$};
    \node[anchor=north] at (6+3/2+1,0) {$3$};
    \node[anchor=north] at (6+4/2+1,0) {$4$};
    \node[anchor=north] at (6+5/2+1,0) {$5$};
}

\
\foreach \point in {
  (6+1/2+1,1/2),
  (6+1/2+1,2/2),
  (6+1/2+1,3/2),
  (6+1/2+1,4/2),
  (6+1/2+1,5/2),
  (6+1/2+1,6/2),
  (6+1/2+1,7/2),
  (6+1/2+1,8/2),
  (6+2/2+1,2/2),
  (6+2/2+1,3/2),
  (6+2/2+1,4/2),
  (6+2/2+1,5/2),
  (6+2/2+1,6/2),
  (6+2/2+1,7/2),
  (6+3/2+1,3/2),
  (6+3/2+1,4/2),
  (6+3/2+1,5/2),
  (6+3/2+1,6/2),
  (6+3/2+1,7/2),
  (6+4/2+1,4/2),
  (6+4/2+1,5/2),
  (6+4/2+1,6/2),
  (6+4/2+1,7/2),
  (6+5/2+1,6/2)}
  {
    \fill \point circle (2pt);
  }
  
\node at (3-4,1) {$n=7$};
\node at (8-4,1) {$n=8$};
\node at (8+2,1) {$n=9$};
\end{tikzpicture}
\caption{${\rm Graph}_{{\rm depth},\dim}(n)$ for
  $n=7,8,9$}\label{figure789}
\end{figure}
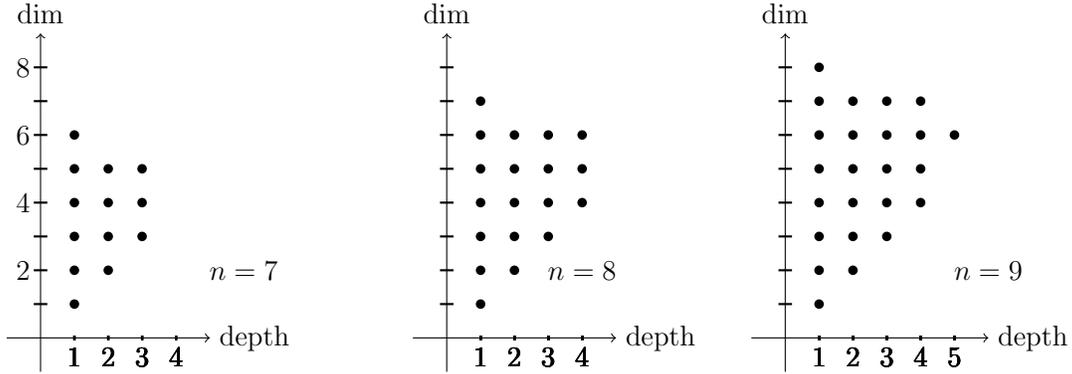

%
%
We observe  that there are big gaps
between ${\rm Graph}_{{\rm depth}, \dim}(n)$ and $C^{+} (n)$ though
$C^{-} (3) = C^{+} (3) = {\rm Graph}_{{\rm depth}, \dim} (3)$.
On the other hand, when $3 \leq n \leq 9$, the gap between
${\rm Graph}_{{\rm depth}, \dim}(n)$ and $C^{-} (n)$ is rather small.
Actually, ${\rm Graph}_{{\rm depth}, \dim}(n) = C^{-} (n)$ for $3 \leq n \leq 8$ 
and ${\rm Graph}_{{\rm depth}, \dim}(9) = C^{-} (9) \cup \{ (5,6) \}$. 
\end{example}

\par
A subset $\mathcal{M} \subseteq \mathbb{N}^{2}$ is said to be {\em convex}
if the following conditions hold: 
\begin{itemize}
    \item if $(a, b_{1}), (a, b_{2}) \in \mathcal{M}$ with $b_{1} < b_{2}$, then $(a, b) \in \mathcal{M}$ for all $b_{1} < b < b_{2}$; 
    \item if $(a_{1}, b), (a_{2}, b) \in \mathcal{M}$ with $a_{1} < a_{2}$, then $(a, b) \in \mathcal{M}$ for all $a_{1} < a < a_{2}$. 
\end{itemize}
We see that ${\rm Graph}_{{\rm depth}, \dim}(n)$ is convex for $1 \leq n \leq 9$
by Examples \ref{DD(123)} and \ref{DD(456789)}. 
This suggests the following question:
\begin{question}
  Is ${\rm Graph}_{{\rm depth}, \dim}(n) \subseteq \mathbb{N}^{2}$
  a convex subset for all $n \geq 1$ ? 
\end{question}

\smallskip


\section{Cameron--Walker graphs}\label{prelim-sec}
In this section, we focus on Cameron--Walker graphs, which are defined
below.
Let ${\rm CW}(n)$ denote the set of all Cameron--Walker graphs
with $n$ vertices and let ${\rm CW}_{{\rm depth}, \dim}(n)$ denote the set of
all possible pairs $(\depth (R/I(G)), \dim R/I(G))$ arising from
$G \in {\rm CW}(n)$: 
\[
{\rm CW}_{{\rm depth}, \dim}(n) = \{(\depth (R/I(G)), \dim R/I(G))
\, : \, G \in {\rm CW}(n) \}. 
\]
In what follows, we assume $n \geq 5$ because any Cameron--Walker graph
has at least five vertices.
The purpose of this section is to determine the set
${\rm CW}_{{\rm depth}, \dim}(n)$ for $n \geq 5$. 

\subsection{Definition of a Cameron--Walker graph}
In this subsection, we recall the definition of a Cameron--Walker graph.
As before we recall some terms from graph theory. 
A subset $\mathcal{M} \subset E(G)$ is said to be a {\em matching} of $G$ 
if $e \cap e' = \emptyset$ for any $e, e' \in \mathcal{M}$ with $e \neq e'$. 
A matching $\mathcal{M}$ of $G$ is called an {\em induced matching} of $G$ if 
for $e, e' \in \mathcal{M}$ with $e \neq e'$, there is no edge $f \in E(G)$ 
with $e \cap f \neq \emptyset$ and $e' \cap f \neq \emptyset$. 
The {\em matching number} ${\rm m}(G)$ of $G$ is the maximum cardinality 
of a matching of $G$. 
Similarly, the {\em induced matching number} ${\rm im}(G)$ of $G$
is the maximum cardinality of an induced matching of $G$. Because
an induced matching is also a matching, we always have
${\rm im}(G) \leq {\rm m}(G)$. 

By virtue of \cite[Theorem 1]{CW} together with \cite[Remark 0.1]{HHKO}, 
we have that the equality ${\rm im}(G) = {\rm m}(G)$ holds 
if and only if $G$ is one of the following graphs: 
\begin{itemize}
\item a star graph;  
\item a star triangle, i.e., a graph joining some triangles
  at one common vertex (see Figure \ref{fig:StarTriangle}); 
\item a connected finite graph consisting of a connected bipartite graph
  with vertex partition 
  $\{v_{1}, \ldots, v_{m}\} \cup \{w_{1}, \ldots, w_{p}\}$
  such that there is at least one leaf edge 
  attached to each vertex $v_{i}$ and that there may be possibly
  some pendant triangles attached to each vertex $w_{j}$;
  see Figure \ref{fig:CameronWalkerGraph}, where 
  $s_{i} \geq 1$ for all $i = 1, \ldots, m$ and 
  $t_{j} \geq 0$ for all $j = 1, \ldots, p$. 
  Note that a pendant triangle is a triangle whose two vertices
  have degree $2$ and the remaining vertex has degree more than two. 
\end{itemize} 

\begin{figure}[htbp]
  \centering
\bigskip

\begin{xy}
	\ar@{} (0,0);(80, 0)  *\cir<4pt>{} = "C2"
	\ar@{-} "C2";(60, -12)  *\cir<4pt>{} = "D2";
	\ar@{-} "C2";(64, -16)  *\cir<4pt>{} = "E2";
	\ar@{-} "D2";"E2";
	\ar@{} "C2"; (80, -5) *++!U{\cdots}
	\ar@{-} "C2";(100, -12)  *\cir<4pt>{} = "F2";
	\ar@{-} "C2";(96, -16)  *\cir<4pt>{} = "G2";
	\ar@{-} "F2";"G2"; 
\end{xy}

\bigskip

  \caption{The star triangle}
  \label{fig:StarTriangle}
\end{figure}
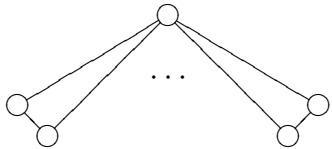

%
%
%

\begin{definition}
A finite connected simple graph $G$ is said to be a {\em Cameron--Walker graph} 
if ${\rm im}(G) = {\rm m}(G)$ and if $G$ is neither a star graph nor a star triangle. 
\end{definition}


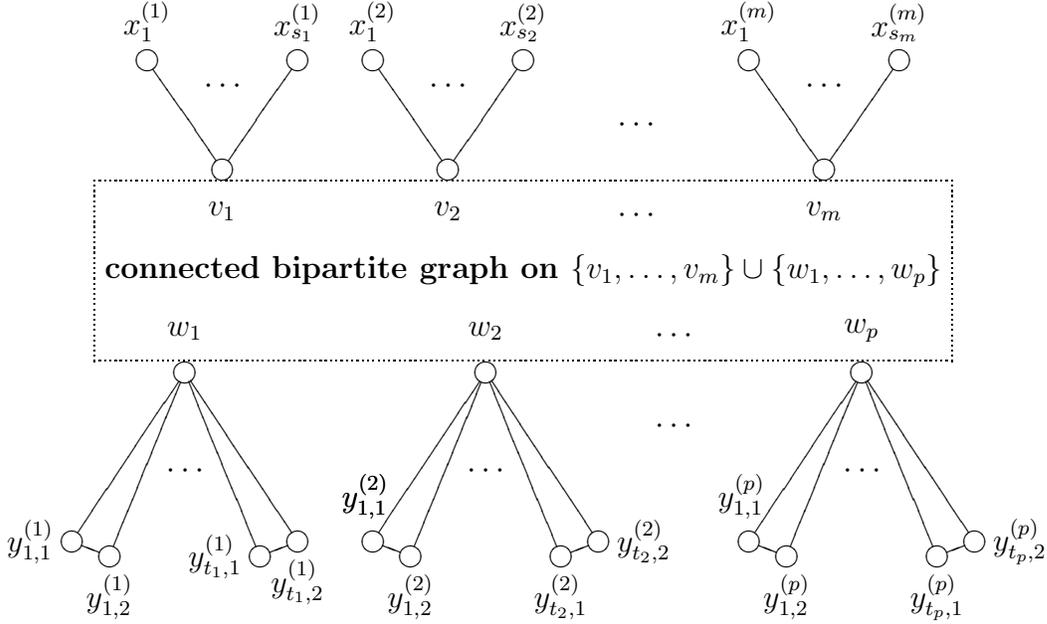
\begin{figure}[htbp]
  \centering
\begin{xy}
	\ar@{} (0,0);(18, 12)  = "A";
	\ar@{.} "A";(132, 12)  = "B";
	\ar@{.} "A";(18, -12)  = "C";
	\ar@{.} "B";(132, -12)  = "D";
	\ar@{.} "C";"D";
	\ar@{} (0,0);(75,0) *{\text{\bf{connected bipartite graph on} $\{v_{1}, \ldots, v_{m}\} \cup \{w_{1}, \ldots, w_{p}\}$}};
	\ar@{} "A";(35, 13.5)  *\cir<4pt>{} = "E1";
	\ar@{-} "E1";(25, 28) *++!D{x^{(1)}_{1}} *\cir<4pt>{};
	\ar@{-} "E1";(45, 28) *++!D{x^{(1)}_{s_{1}}} *\cir<4pt>{};
	\ar@{} (0,0); (35, 28) *++!U{\cdots}
	\ar@{} "A";(65, 13.5)  *\cir<4pt>{} = "E2";
	\ar@{-} "E2";(55, 28) *++!D{x^{(2)}_{1}} *\cir<4pt>{};
	\ar@{-} "E2";(75, 28) *++!D{x^{(2)}_{s_{2}}} *\cir<4pt>{};
	\ar@{} (0,0); (65, 28) *++!U{\cdots}
	\ar@{} "A";(115, 13.5)  *\cir<4pt>{} = "E";
	\ar@{-} "E";(105, 28) *++!D{x^{(m)}_{1}} *\cir<4pt>{};
	\ar@{-} "E";(125, 28) *++!D{x^{(m)}_{s_{m}}} *\cir<4pt>{};
	\ar@{} (0,0); (115, 28) *++!U{\cdots}
	\ar@{} (0,0);(35,8) *{\text{$v_{1}$}};
	\ar@{} (0,0);(65,8) *{\text{$v_{2}$}};
	\ar@{} (0,0);(90,4) *++!D{\cdots};
        \ar@{} (0,0);(115,8) *{\text{$v_{m}$}};
	\ar@{} (0,0);(90,16) *++!D{\cdots};
	\ar@{} "E1";(30, -13.5)  *\cir<4pt>{} = "F10";
	\ar@{-} "F10";(15, -36) *++!R{y^{(1)}_{1,1}} *\cir<4pt>{} = "F11";
	\ar@{-} "F10";(20, -38) *++!U{y^{(1)}_{1,2}} *\cir<4pt>{} = "F12";
	\ar@{-} "F11";"F12";
	\ar@{-} "F10";(40, -38) *++!R{y^{(1)}_{t_{1},1}} *\cir<4pt>{} = "Ft1";
	\ar@{-} "F10";(45, -36) *++!U{y^{(1)}_{t_{1},2}} *\cir<4pt>{} = "Ft2";
	\ar@{-} "Ft1";"Ft2";
	\ar@{} "E1";(70, -13.5)  *\cir<4pt>{} = "F20";
	\ar@{-} "F20";(55, -36) *\cir<4pt>{} = "F21";
	\ar@{} (0,0);(54,-30) *{\text{$y^{(2)}_{1,1}$}};
	\ar@{-} "F20";(60, -38) *++!U{y^{(2)}_{1,2}} *\cir<4pt>{} = "F22";
	\ar@{-} "F21";"F22";
	\ar@{-} "F20";(80, -38) *++!U{y^{(2)}_{t_{2},1}} *\cir<4pt>{} = "Ftt1";
	\ar@{-} "F20";(85, -36) *++!L{y^{(2)}_{t_{2},2}} *\cir<4pt>{} = "Ftt2";
	\ar@{-} "Ftt1";"Ftt2";
	\ar@{} "E1";(120, -13.5)  *\cir<4pt>{} = "Fn0";
	\ar@{-} "Fn0";(105, -36) *\cir<4pt>{} = "Fn1";
	\ar@{} (0,0);(104,-30) *{\text{$y^{(p)}_{1,1}$}};
	\ar@{-} "Fn0";(110, -38) *++!U{y^{(p)}_{1,2}} *\cir<4pt>{} = "Fn2";
	\ar@{-} "Fn1";"Fn2";
	\ar@{-} "Fn0";(130, -38) *++!U{y^{(p)}_{t_{p},1}} *\cir<4pt>{} = "Fnn1";
	\ar@{} (0,0);(54,-30) *{\text{$y^{(2)}_{1,1}$}};
	\ar@{-} "Fn0";(135, -36) *++!L{y^{(p)}_{t_{p},2}} *\cir<4pt>{} = "Fnn2";
	\ar@{-} "Fnn1";"Fnn2";
	\ar@{} (0,0);(30,-8) *{\text{$w_{1}$}};
	\ar@{} (0,0);(70,-8) *{\text{$w_{2}$}};
	\ar@{} (0,0);(95,-12) *++!D{\cdots};
	\ar@{} (0,0);(120,-8) *{\text{$w_{p}$}};
	\ar@{} (0,0);(95,-24) *++!D{\cdots};
	\ar@{} (0,0);(30,-30) *++!D{\cdots};
	\ar@{} (0,0);(70,-30) *++!D{\cdots};
	\ar@{} (0,0);(120,-30) *++!D{\cdots};
\end{xy}

\bigskip

  \caption{A Cameron--Walker graph}
  \label{fig:CameronWalkerGraph}
\end{figure}

For a Cameron--Walker graph $G$ with notation as in
Figure \ref{fig:CameronWalkerGraph},
we denote by $G_{\mathrm{bip}}$ the bipartite part of $G$, 
namely, the
induced subgraph of
$G$ on $\{ v_1, \ldots, v_m \} \cup \{ w_1, \ldots, w_p \}$. 

\par
We collect some known formulas from \cite{HHKO,HKMT,HKMVT}
for homological invariants of Cameron--Walker graphs
for later use.
Here we set
\begin{displaymath}
  i(G) = \min \{ |A| \; : \;
  \text{$A$ is an independent set of $G$ with $A \cup N_{G}(A) = V(G)$} \},
\end{displaymath}
where $N_{G}(A) = \bigcup_{v \in A}N_{G}(v) \setminus A$
as in \cite[Section 4]{DaoSchweig}.
\begin{theorem}[{\cite{HHKO, HKMT,HKMVT}}] 
  \label{CWformulas}
  Let $G$ be a Cameron--Walker graph with notation
  as in Figure \ref{fig:CameronWalkerGraph}. 
  Then 
  \begin{enumerate}
  \item[$(i)$]
    $\displaystyle |V(G)| = m + p + \sum_{i=1}^{m} s_{i} + 2 \sum_{j=1}^{p} t_{j}$. 
  \item[$(ii)$] $($\cite[Proposition 1.3]{HKMT}$)$
    \begin{displaymath}
      \dim R/I(G) = \deg h(R/I(G))
      = \sum_{i=1}^m s_i + \sum_{j=1}^{p} t_j + |\{j:t_{j} = 0\}|.
    \end{displaymath}
  \item[$(iii)$] $($\cite[Corollary 3.7]{HHKO}$)$ $\depth (R/I(G)) = i(G)$.
  \item[$(iv)$] $($\cite[Lemma 2.1]{HKMT}$)$ The inequalities
    \begin{displaymath}
      m + |\{j \; : \; t_{j} > 0\}| \leq \depth (R/I(G))
      \leq \min \left\{ p + \sum_{i=1}^m s_i, \, m + \sum_{j = 1}^{p} t_{j} \right\} 
    \end{displaymath}
    hold. Moreover, if the bipartite part of $G$
    is the complete bipartite graph, then 
    \begin{displaymath}
      \depth (R/I(G))
      = \min\left\{ p + \sum_{i = 1}^{m} s_{i}, m + \sum_{j = 1}^{p} t_{j} \right\}.
    \end{displaymath}
  \item[$(v)$] $($\cite[Lemma 4.2]{HKMVT}$)$ $\displaystyle \reg (R/I(G)) = m + \sum_{j=1}^p t_j$.
    In particular, $\depth (R/I(G)) \leq \reg (R/I(G))$. 
    \end{enumerate}
\end{theorem}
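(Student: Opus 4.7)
The statement collects five formulas for Cameron--Walker graphs that were originally proved in \cite{HHKO}, \cite{HKMT}, and \cite{HKMVT}. My plan is to sketch independent reproofs organized around the structural picture in Figure~\ref{fig:CameronWalkerGraph}.

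Part (i) is a direct vertex count: the bipartite backbone supplies $m+p$, the whisker leaves supply $\sum s_i$, and each pendant triangle adds two new vertices beyond its $w_j$-base, for a total of $m+p+\sum s_i+2\sum t_j$. For the first half of (ii), Lemma~\ref{dim} reduces $\dim R/I(G)$ to the independence number; the natural candidate
\[
S_0 = \bigl\{x^{(i)}_k : 1\le i\le m,\, 1\le k\le s_i\bigr\}\cup \bigl\{y^{(j)}_{\ell,1} : 1\le j\le p,\, 1\le \ell\le t_j\bigr\}\cup \bigl\{w_j : t_j=0\bigr\}
\]
has the claimed cardinality, and its maximality is a local count: any independent set contains at most $\max(s_i,1)=s_i$ vertices from each $v_i$-leaf cluster, and at most $t_j+1$ vertices from each $w_j$-triangle cluster when $t_j>0$. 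For the equality $\deg h(R/I(G))=\dim R/I(G)$, I would realise the independence complex of $G$ as a shellable complex and read off the $h$-polynomial term by term; alternatively, I would pass to an Artinian reduction of $R/I(G)$ by a system of parameters consisting of linear forms and check directly that the socle degree equals $\dim R/I(G)$.

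For part (iii) I would plug Cameron--Walker graphs into the Dao--Schweig framework from \cite{DaoSchweig}, verifying the structural hypothesis that forces $\depth(R/I(G))=i(G)$; this is essentially what is carried out in \cite{HHKO}. For part (iv), the lower bound $m+|\{j:t_j>0\}|$ follows from the observation that any independent dominating set must contribute at least one element to dominate each $v_i$ (which sees only leaves and $w_j$'s) and at least one element to dominate each non-trivial pendant triangle. The two upper-bound quantities come from two explicit independent dominating sets, one built from the $v_i$-side plus one triangle-top per $j$, the other from the $w_j$-side plus all leaves; and in the complete bipartite case an exchange argument forces the smaller of the two to be optimal. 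For (v), the inequality $\reg(R/I(G))\ge m+\sum t_j$ follows from the induced-matching lower bound applied to the induced matching consisting of one edge $\{v_i,x^{(i)}_1\}$ per $v_i$ together with one edge per pendant triangle; the matching upper bound comes from a Betti-splitting or Mayer--Vietoris argument that peels off pendant triangles and whiskers inductively and reduces to the case of a bipartite graph all of whose vertices have a whisker.

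The principal obstacle is part (iii): the identification $\depth(R/I(G))=i(G)$ is a genuine structural theorem, since generic graphs do not satisfy this equality, and verifying the Dao--Schweig hypothesis for the whole Cameron--Walker class is where the real combinatorial work lies. The remaining parts reduce to bookkeeping once the right candidate independent set, dominating set, or induced matching has been written down.
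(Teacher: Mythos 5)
First, note that the paper itself offers no proof of Theorem \ref{CWformulas}: it is a compilation of results quoted from \cite{HHKO}, \cite{HKMT} and \cite{HKMVT}, so your sketch must be judged on its own terms rather than against an argument in this paper. Judged that way, the serious gap is in part (ii), in the equality $\deg h(R/I(G)) = \dim R/I(G)$. Writing $\Delta$ for the independence complex of $G$ and $d = \dim R/I(G)$, the Hilbert series identity $H_{R/I(G)}(t) = \sum_{i} f_{i-1}t^{i}/(1-t)^{i}$ gives $h_d = (-1)^{d+1}\tilde{\chi}(\Delta)$, so the statement to be proved is precisely that the independence complex of a Cameron--Walker graph has nonvanishing reduced Euler characteristic. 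Neither of your two proposed methods delivers this. Shellability of $\Delta$ does not force $h_d \neq 0$: the path $P_4$ has a vertex-decomposable, hence shellable, independence complex, yet $h_{R/I(P_4)}(t) = 1+2t$ while $\dim R/I(P_4) = 2$. And reading $\deg h$ off the socle degree of an Artinian reduction by linear forms is valid only when $R/I(G)$ is Cohen--Macaulay (in which case $\deg h = \reg$), which Cameron--Walker quotients generally are not; indeed for them $\reg(R/I(G)) \leq \dim R/I(G) = \deg h(R/I(G))$ with strict inequality possible, so any argument identifying $\deg h$ with a socle degree would prove too much. This equality is the real content of \cite[Proposition 1.3]{HKMT} and needs a dedicated argument.

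Two smaller points. In your local count for the independence number, the bound ``at most $t_j+1$ vertices from each $w_j$-triangle cluster when $t_j>0$'' is wrong: if $w_j$ is chosen then all $2t_j$ triangle vertices are excluded, and otherwise each triangle contributes at most one vertex, so the correct bound is $t_j$. With $t_j+1$ your upper bound becomes $\sum_i s_i + \sum_j t_j + p$, which exceeds $|S_0|$, so maximality of $S_0$ is not established (the fix is immediate, but as written the argument does not close). Finally, for (iii) you only defer to the Dao--Schweig framework and \cite{HHKO} --- which is what the paper does too, so that is fair --- but for the upper bound in (v) a ``Betti-splitting or Mayer--Vietoris'' induction is left entirely unspecified; the standard clean route is the bound $\reg(R/I(G)) \leq \mathrm{m}(G)$ of \cite{HVT}, combined with $\mathrm{im}(G) = \mathrm{m}(G)$ for Cameron--Walker graphs and Katzman's bound $\mathrm{im}(G) \leq \reg(R/I(G))$ from \cite{Katzman}, after which one checks $\mathrm{m}(G) = m + \sum_{j} t_j$ by the same disjoint-cluster count; your explicit induced matching then handles only the lower half of what is needed.
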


%

The formula $\depth (R/I(G)) = i(G)$ (Theorem \ref{CWformulas} (iii)) 
is a characterization of $\depth (R/I(G))$ in terms of
the combinatorics of a Cameron--Walker graph $G$. 
However we can describe $i(G)$, and thus $\depth (R/I(G))$, 
according to the structure of a Cameron--Walker graph $G$. 
We will use this characterization in Section \ref{sec:MainResult}
for the proof of Theorem \ref{maintheorem3}. 
\begin{theorem}
  \label{CWdepthV}
  Let $G$ be a Cameron--Walker graph with notation
  as in Figure \ref{fig:CameronWalkerGraph}. 
  For a subset $V \subset \{ v_{1}, \ldots, v_{m} \}$, we set
  \begin{displaymath}
    f(V) = \sum_{v_{i} \in V} s_{i} + m - |V|
      + \sum_{N_{G_{\mathrm{bip}}} (w_{j}) \not\subset V} t_{j}
      + \left|\left\{j : N_{G_{\mathrm{bip}}} (w_{j}) \subset V \right\}\right|. 
  \end{displaymath}
  Then $\displaystyle i(G) = \min_{V \subset \{ v_{1}, \ldots, v_{m} \} }
  \left\{ f(V) \right\}$.
  In particular,  
  \begin{displaymath}
    \depth (R/I(G))
      = \min_{V \subset \{ v_{1}, \ldots, v_{m} \} } \left\{ f(V)  \right\}. 
  \end{displaymath}
\end{theorem}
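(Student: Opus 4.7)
The plan is to invoke Theorem \ref{CWformulas}(iii), which gives $\depth(R/I(G)) = i(G)$, so that it suffices to establish the combinatorial formula $i(G) = \min_V f(V)$. I will prove the two inequalities separately: for the upper bound, exhibit for each $V$ a specific independent dominating set of size $f(V)$; for the lower bound, take any independent dominating set $A$ and produce a corresponding $V_A$ with $f(V_A) \leq |A|$.

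For the inequality $i(G) \leq \min_V f(V)$, I would fix $V \subset \{v_1,\ldots,v_m\}$ and define $A_V$ by the following rule: include every $v_i \notin V$; include every leaf $x_k^{(i)}$ attached to each $v_i \in V$; include $w_j$ whenever $N_{G_{\mathrm{bip}}}(w_j) \subset V$; and for every $w_j$ with $N_{G_{\mathrm{bip}}}(w_j) \not\subset V$, select one vertex from each of the $t_j$ pendant triangles at $w_j$. Independence of $A_V$ holds because each chosen $w_j$ has no $v$-neighbor in $A_V$ (by the condition $N_{G_{\mathrm{bip}}}(w_j) \subset V$) and each triangle contributes only one of its two degree-$2$ vertices. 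The dominating property is a straightforward case check across vertex classes, and counting gives $|A_V| = f(V)$ directly.

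For the reverse inequality, let $A$ be any independent dominating set and set $V_A = \{v_i : v_i \notin A\}$. The plan is to decompose $A$ according to which structural piece each vertex belongs to and bound each contribution from below. The leaf $x_k^{(i)}$ has only the neighbor $v_i$, so when $v_i \in V_A$ every $x_k^{(i)}$ must lie in $A$; this accounts for $\sum_{v_i \in V_A} s_i + (m - |V_A|)$ from the leaves together with the $v_i \notin V_A$ themselves. On the $w$-side, each $w_j$ with $N_{G_{\mathrm{bip}}}(w_j) \not\subset V_A$ has a neighbor in $A$, so $w_j \notin A$ and each of the $t_j$ triangles independently needs at least one vertex in $A$; conversely each $w_j$ with $N_{G_{\mathrm{bip}}}(w_j) \subset V_A$ has no bipartite neighbor in $A$, so $w_j$ must be dominated either by itself or by a pendant-triangle vertex, contributing at least $1$.

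The main obstacle will be the bookkeeping for this last case of $w_j$'s in the lower bound: when $N_{G_{\mathrm{bip}}}(w_j) \subset V_A$ and $t_j \geq 1$, the vertex $w_j$ itself can enter $A$ and simultaneously dominate all $t_j$ triangles, so the minimum contribution is $1$ rather than $t_j$; this is exactly why $f(V)$ contains the term $|\{j : N_{G_{\mathrm{bip}}}(w_j) \subset V\}|$ instead of a sum over $t_j$. Checking that the contributions are disjoint and collectively tight then yields $|A| \geq f(V_A) \geq \min_V f(V)$, which combined with the upper bound completes the proof, and the ``In particular'' statement follows immediately from Theorem \ref{CWformulas}(iii).
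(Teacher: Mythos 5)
Your proposal is correct and follows essentially the same route as the paper's proof: both directions use the identical constructions (the independent dominating set built from $V$ by taking the $v_i\notin V$, the leaves over $V$, one vertex per triangle at each $w_j$ with $N_{G_{\mathrm{bip}}}(w_j)\not\subset V$, and the $w_j$ with $N_{G_{\mathrm{bip}}}(w_j)\subset V$; and conversely the assignment $A\mapsto V_A=\{v_i : v_i\notin A\}$), with the reduction to $i(G)$ via Theorem \ref{CWformulas}(iii). The only difference is presentational: the paper computes $|A|$ exactly and then verifies $f(V_A)\leq|A|$ term by term, whereas you bound $|A|\geq f(V_A)$ directly by decomposing the contributions, which is the same estimate.
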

\begin{proof}
%
  First, let $A$ be an independent set with $A \cup N_{G}(A) = V(G)$. 
  We set $A_{v} = A \cap \{v_{1}, \ldots, v_{m}\}$
  and $A_{w} = A \cap \{w_{1}, \ldots, w_{p}\}$.
  Note that
\begin{itemize}
\item if $v_{i} \not\in A_{v}$,
  then $x_{k}^{(i)} \in A$ for all $1 \leq k \leq s_{i}$; 
\item if $w_{j} \not\in A_{w}$ and $t_{j} > 0$,
  then $y_{\ell, 1}^{(j)} \in A$ or $y_{\ell, 2}^{(j)} \in A$
  for all $1 \leq \ell \leq t_{j}$; 
\item if $v_{i} \in A_{v}$, then $N_{G_{\mathrm{bip}}}(v_{i}) \cap A_{w} = \emptyset$; 
\item if $w_{j} \in A_{w}$, then $N_{G_{\mathrm{bip}}}(w_{j}) \cap A_{v} = \emptyset$; and
\item if $w_{j} \not\in A_{w}$ and $N_{G_{\mathrm{bip}}}(w_{j}) \cap A_{v} = \emptyset$,
  then $t_{j} > 0$. 
\end{itemize}
Hence we have 
  \[
  |A| = \sum_{v_{i} \not\in A_{v}}s_{i} + |A_{v}|
  + \sum_{w_{j} \not\in A_{w}} t_{j} + |A_{w}| 
  \]
and
\begin{eqnarray*}
& & f(\{v_{1}, \ldots, v_{m}\} \setminus A_{v}) \\ 
  &=& \sum_{v_{i} \not\in A_{v}} s_{i} + m - (m - |A_{v}|)
  + \sum_{N_{G_{\mathrm{bip}}} (w_{j}) \cap A_{v} \neq \emptyset} t_{j}
  + \left|\left\{j : N_{G_{\mathrm{bip}}} (w_{j}) \cap A_{v}
  = \emptyset \right\}\right| \\ 
  &=& \sum_{v_{i} \not\in A_{v}} s_{i} + |A_{v}|
  + \sum_{N_{G_{\mathrm{bip}}} (w_{j}) \cap A_{v} \neq \emptyset} t_{j}
  + \left|\left\{j : N_{G_{\mathrm{bip}}} (w_{j}) \cap A_{v}
  = \emptyset \right\}\right| \\
  &=& \sum_{v_{i} \not\in A_{v}} s_{i} + |A_{v}|
  + \sum_{N_{G_{\mathrm{bip}}} (w_{j}) \cap A_{v} \neq \emptyset \atop w_{j} \not\in A_{w}} t_{j}
  +  \left|\left\{j : N_{G_{\mathrm{bip}}} (w_{j}) \cap A_{v} = \emptyset,
  w_{j} \not\in A_{w} \right\}\right| + |A_{w}| \\
  &\leq& \sum_{v_{i} \not\in A_{v}} s_{i} + |A_{v}|
  + \sum_{N_{G_{\mathrm{bip}}} (w_{j}) \cap A_{v} \neq \emptyset \atop w_{j} \not\in A_{w}} t_{j}
  + \sum_{N_{G_{\mathrm{bip}}} (w_{j}) \cap A_{v} = \emptyset \atop w_{j} \not\in A_{w}} t_{j}
  + |A_{w}| \\
  &=& \sum_{v_{i} \not\in A_{v}} s_{i} + |A_{v}| + \sum_{w_{j} \not\in A_{w}} t_{j}
  + |A_{w}| = |A|. 
\end{eqnarray*}
Thus it follows that $\displaystyle \min_{V \subset \{v_{1}, \ldots, v_{m}\}}
\left\{ f(V) \right\} \leq i(G)$. 

\par
Next, take a subset $V \subset \{v_{1}, \ldots, v_{m}\}$.
Then we consider the following subset $A(V) \subset V(G)$: 
\begin{displaymath}
  \begin{aligned}
    A(V) &= \left( \bigcup_{v_{i} \in V} \left\{ x_{1}^{(i)}, \ldots, x_{s_{i}}^{(i)}
    \right\} \right) \cup \left( \{v_{1}, \ldots, v_{m} \} \setminus V \right) \\
    &\cup \left( \bigcup_{N_{G_{\mathrm{bip}}} (w_{j}) \not\subset V \atop t_{j} > 0}
    \left\{ y_{1, 1}^{(j)}, \ldots, y_{t_{j}, 1}^{(j)} \right\} \right) 
    \cup \left\{w_{j} : N_{G_{\mathrm{bip}}} (w_{j}) \subset V \right\}. 
  \end{aligned}
\end{displaymath}
It is easy to see that $A(V)$ is an independent set
with $A \cup N_{G}(A) = V(G)$ and $|A(V)| = f(V)$. 
Hence one has $\displaystyle \min_{V \subset \{v_{1}, \ldots, v_{m}\}}
\left\{ f(V) \right\} \geq i(G)$. 
Therefore we have the desired conclusion. 
\end{proof}
\begin{remark}
  In \cite[Lemma 2.1]{HKMT}, the authors derived Theorem \ref{CWformulas} (iv)
  by investigating $i(G)$.
  We can obtain the same result using
  $\min_{V \subset \{ v_{1}, \ldots, v_{m} \} }
  \left\{ f(V) \right\}$. The proof is as follows: 

  \par
  Let $G$ be a Cameron--Walker graph with notation as in Figure
  \ref{fig:CameronWalkerGraph}.
  Since $s_{i} \geq 1$, for any subset $V \subset \{ v_1, \ldots, v_m \}$,
  one has
    \begin{eqnarray*}
      f(V) &=& \sum_{v_{i} \in V} s_{i} + m - |V|
      + \sum_{N_{G_{\mathrm{bip}}} (w_{j}) \not\subset V} t_{j}
      + \left|\left\{j : N_{G_{\mathrm{bip}}} (w_{j}) \subset V \right\}\right| \\
      &=& m + \left( \sum_{v_{i} \in V} s_{i} - |V| \right)
      + \sum_{N_{G_{\mathrm{bip}}} (w_{j}) \not\subset V \atop t_{j} > 0} t_{j}
      + \left|\left\{j : N_{G_{\mathrm{bip}}} (w_{j}) \subset V , t_{j} > 0 \right\}\right| \\
    &+& \left|\left\{j : N_{G_{\mathrm{bip}}} (w_{j}) \subset V, t_{j} = 0 \right\}\right| \\
      &\geq& m + \left|\left\{j : N_{G_{\mathrm{bip}}} (w_{j}) \not\subset V , t_{j} > 0 \right\}\right|
      + \left|\left\{j : N_{G_{\mathrm{bip}}} (w_{j}) \subset V , t_{j} > 0 \right\}\right| \\
    &=& m + |\{j:t_{j} > 0\}|. 
    \end{eqnarray*}
    Hence $m + |\{j:t_{j} > 0\}| \leq {\rm depth}(R/I(G))$.

    \par
    On the other hand,
    $f(\{ v_1, \ldots, v_m \}) = p + \sum_{i=1}^m s_i$ and
    $f(\emptyset) = m + \sum_{j=1}^p t_j$ imply the upper bound for
    $\depth (R/I(G))$. 

   \par
   For the latter assertion, assume that the bipartite part of
   $G$ is the complete bipartite graph. 
   If $V = \{v_{1}, \ldots, v_{m}\}$,
   then $\displaystyle f(V) = \sum_{i = 1}^{m}s_{i} + p$.
   If $V \subsetneq \{v_{1}, \ldots, v_{m}\}$, then
      \[
      f(V) = \sum_{v_{i} \in V} s_{i} + m - |V|
      + \sum_{j = 1}^{p} t_{j} \geq m + \sum_{j = 1}^{p} t_{j} = f(\emptyset)
      \]
      since $s_{i} \geq 1$ for all $1 \leq i \leq m$.
      Therefore we have the desired conclusion. 
\end{remark}

\par
In order to investigate ${\rm CW}_{{\rm depth}, \dim}(n)$,
we require more relations among $|V(G)|$, $\depth (R/I(G))$,
and $\dim R/I(G)$.  
\begin{lemma}\label{CWDDLemma}
  Let $G$ be a Cameron--Walker graph with notation
  as in Figure \ref{fig:CameronWalkerGraph}.  Then
  \begin{enumerate}
  \item[$(i)$] $2 \leq {\rm depth}(R/I(G))
    \leq \left\lfloor \frac{|V(G)| - 1}{2} \right\rfloor$. 
  \item[$(ii)$] ${\rm depth}(R/I(G)) + \dim R/I(G) \leq |V(G)|$. 
  \item[$(iii)$] $|V(G)| < {\rm depth}(R/I(G)) + 2\dim R/I(G)$. 
  \item[$(iv)$] If $R/I(G)$ is Cohen-Macaulay,
    then $|V(G)| = 2m + 3p$ and $\dim R/I(G) = {\rm depth}(R/I(G)) = m + p$.  
  \end{enumerate}
\end{lemma}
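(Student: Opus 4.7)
My plan is to derive all four assertions directly from the formulas collected in Theorem \ref{CWformulas}, namely $|V(G)|=m+p+\sum_{i=1}^m s_i+2\sum_{j=1}^p t_j$, the formula $\dim R/I(G)=\sum s_i+\sum t_j+|\{j:t_j=0\}|$, and the sandwich $m+|\{j:t_j>0\}|\leq \depth(R/I(G))\leq \min\{p+\sum s_i,\,m+\sum t_j\}$. To streamline the bookkeeping I set $q_0=|\{j:t_j=0\}|$ and $q_1=|\{j:t_j>0\}|$, so that $p=q_0+q_1$ and $\sum t_j\geq q_1$ with equality iff every positive $t_j$ equals $1$.

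For (i), the lower bound $\depth(R/I(G))\geq 2$ splits on $m$: when $m\geq 2$ the sandwich already gives $\depth(R/I(G))\geq m\geq 2$, while when $m=1$ I would invoke the Cameron--Walker hypothesis to exclude the case $\sum t_j=0$, since otherwise every $w_j$ would be a leaf attached to $v_1$ and $G$ would be a star. Hence $m=1$ forces some $t_j>0$, giving $\depth(R/I(G))\geq m+q_1\geq 2$. For the upper bound I would add the two estimates of the sandwich to obtain $2\depth(R/I(G))\leq |V(G)|-\sum t_j$; when $\sum t_j\geq 1$ this yields the desired integer bound at once, while when $\sum t_j=0$ the sandwich collapses to $\depth(R/I(G))=m$ and the target inequality reduces to $p+\sum s_i\geq m+1$, which follows from $s_i\geq 1$, $p\geq 1$, and the fact that the $m=1$ subcase of $\sum t_j = 0$ is excluded as above.

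Parts (ii) and (iii) are short substitutions. For (ii) I would use only the estimate $\depth(R/I(G))\leq m+\sum t_j$, combine it with the formula for $\dim$, and invoke $q_0\leq p$ to land exactly at $|V(G)|$. For (iii) I would use the lower estimate $\depth(R/I(G))\geq m+q_1$ together with the formula for $\dim$ and the identity $p=q_0+q_1$; a direct calculation yields $\depth(R/I(G))+2\dim R/I(G)-|V(G)|\geq \sum s_i+q_0\geq m\geq 1>0$, giving the claimed strict inequality.

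For (iv), assuming the Cohen--Macaulay equality $\dim R/I(G)=\depth(R/I(G))$, I would substitute the formula for $\dim$ into each upper estimate of the sandwich in turn. The bound $\depth(R/I(G))\leq p+\sum s_i$ yields $\sum t_j\leq q_1$, which paired with the trivial reverse inequality forces every positive $t_j$ to equal $1$. The bound $\depth(R/I(G))\leq m+\sum t_j$ then yields $\sum s_i+q_0\leq m$; since each $s_i\geq 1$, this forces $q_0=0$ (so in fact $t_j=1$ for every $j$) and $s_i=1$ for every $i$. Plugging these values back into the formulas for $|V(G)|$ and $\dim R/I(G)$ gives $|V(G)|=2m+3p$ and $\dim R/I(G)=\depth(R/I(G))=m+p$. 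The only genuinely delicate point in the whole lemma is the exclusion in (i) for $m=1$, where I must explicitly invoke the defining hypothesis that a Cameron--Walker graph is not a star; everything else is bookkeeping with the formulas of Theorem \ref{CWformulas}.
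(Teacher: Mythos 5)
Your proof is correct, but it takes a genuinely different route from the paper's. The paper proves (i) and (ii) by passing through the regularity: for (i) it combines $\depth (R/I(G)) \leq \reg (R/I(G))$ (Theorem \ref{CWformulas} (v)) with the bound $\reg (R/I(G)) \leq \left\lfloor \frac{n-1}{2} \right\rfloor$ from \cite[Theorem 5.1]{HKMVT}, and gets the lower bound $\depth (R/I(G)) \geq 2$ from the classification in \cite[Proposition 2.8]{HKMT}; for (ii) it chains $\depth (R/I(G)) + \dim R/I(G) \leq \reg (R/I(G)) + \deg h(R/I(G)) \leq n$ using Theorem \ref{CWformulas} (ii), (v) together with \cite[Theorem 13]{HMVT}. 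For (iv) the paper simply cites the Cohen--Macaulay classification of \cite[Theorem 1.3]{HHKO}, which gives $s_i = t_j = 1$ for all $i, j$. You instead derive all four parts from the counting formulas and the depth sandwich of Theorem \ref{CWformulas} (i), (ii), (iv), plus the defining hypothesis that a Cameron--Walker graph is not a star (needed to rule out $m = 1$ with $\sum t_j = 0$ in part (i)); only your part (iii) coincides with the paper's computation. In particular, your argument for (iv) re-proves the forward direction of the HHKO classification: the two halves of the sandwich, combined with $\dim R/I(G) = \depth (R/I(G))$, yield $\sum t_j + |\{j : t_j = 0\}| \leq p$ and $\sum s_i + |\{j : t_j = 0\}| \leq m$, which force every $t_j$ to equal $1$ and every $s_i$ to equal $1$. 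What each approach buys: the paper's proof is shorter because it leans on three external results, whereas yours is self-contained modulo Theorem \ref{CWformulas}, avoids the regularity entirely, and makes the arithmetic mechanism visible. One cosmetic remark: in the $\sum t_j = 0$ subcase of the upper bound in (i), your appeal to the exclusion of $m = 1$ is unnecessary, since $p \geq 1$ and $\sum s_i \geq m$ already give $p + \sum s_i \geq m + 1$.
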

\begin{proof}
  \begin{enumerate}
  \item[$(i)$] It follows from Theorem \ref{CWformulas} (v),
    \cite[Proposition 2.8]{HKMT} and \cite[Theorem 5.1]{HKMVT}. 
  \item[$(ii)$] It follows from Theorem \ref{CWformulas} (ii), (v)
    and \cite[Theorem 13]{HMVT}. 
  \item[$(iii)$] By virtue of Theorem \ref{CWformulas} (i), (ii), (iv),
    we have 
    \begin{eqnarray*}
    |V(G)| - \dim R/I(G) &=& m + p + \sum_{j = 1}^{p} t_{j} - |\{j:t_{j} = 0\}| \\
    &=& m + |\{j:t_{j} > 0\}| + \sum_{j = 1}^{p} t_{j} \\
    &<& {\rm depth}(R/I(G)) + \dim R/I(G). 
    \end{eqnarray*}
    Hence one has $|V(G)| < {\rm depth}(R/I(G)) + 2\dim R/I(G)$.
  \item[$(iv)$] Assume that $R/I(G)$ is Cohen-Macaulay. 
    Then $s_{i} = t_{j} = 1$ for all $1 \leq i \leq m$
    and for all $1 \leq j \leq p$ by virtue of \cite[Theorem 1.3]{HHKO}. 
    Thus we have $|V(G)| = 2m + 3p$ and
    ${\rm depth}(R/I(G)) = \dim R/I(G) =  m + p$
    from Theorem \ref{CWformulas} (i), (ii).  
  \end{enumerate}
\end{proof}

\begin{remark}
  The inequalities (i), (ii) and (iii) of Lemma \ref{CWDDLemma} 
  do not hold for non-Cameron--Walker graphs in general. 
  We give some examples. 
  \begin{enumerate}
  \item[(i)] Let $G = G(2; 1, 1)$ be the graph
    in Construction \ref{const:G(m;s)}.
    Then ${\rm depth}(R/I(G)) = 2 > 1
    = \left\lfloor \frac{|V(G)| - 1}{2} \right\rfloor$
    by virtue of Proposition \ref{G(m)}. 
  \item[(ii)] Let $G = G(2; 2, 2)$ be the graph in Construction
    \ref{const:G(m;s)}. Then ${\rm depth}(R/I(G)) + \dim R/I(G) = 7 > 6
    = |V(G)|$ virtue of Proposition \ref{G(m)}. 
  \item[(iii)] Let $G = K_{4}$ be the complete graph with $4$ vertices.
    Then $|V(G)| = 4 > 3 = {\rm depth}(R/I(G)) + 2\dim R/I(G)$. 
  \end{enumerate}
\end{remark}

\subsection{Special families of Cameron--Walker graphs}
In this subsection,
we introduce some special families of Cameron--Walker graphs 
and compute invariants of its edge ideal for later use. 
The first special family is constructed as follows:
\begin{construction}
  \label{const:specialCW1}
 Let $m, p, t \geq 1$ be integers.  
 Let $G = G^{(1)}_{m,p,t}$ be the Cameron--Walker
  graph whose bipartite part is the complete bipartite graph $K_{m, p}$ with 
  $s_1 = \cdots = s_m = 1$, $t_{1} = \cdots = t_{p - 1} = 1$ and $t_{p} = t$; see Figure \ref{fig:specialCW1}. 
  
  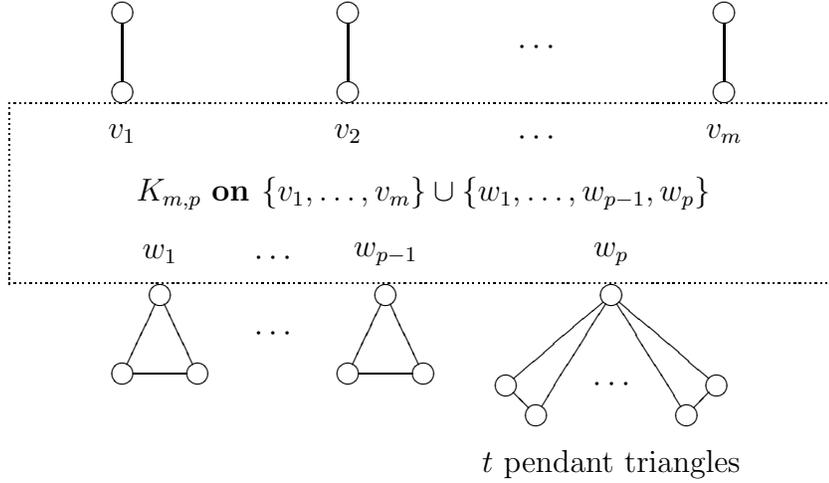
\begin{figure}[htbp]
  \centering
\begin{xy}
	\ar@{} (0,0);(20, 12)  = "A";
	\ar@{.} "A";(130, 12)  = "B";
	\ar@{.} "A";(20, -12)  = "C";
	\ar@{.} "B";(130, -12)  = "D";
	\ar@{.} "C";"D";
	\ar@{} (0,0);(75,0) *{\text{$K_{m, p}$ \bf{on} $\{v_{1}, \ldots, v_{m}\} \cup \{w_{1}, \ldots, w_{p - 1}, w_{p}\}$}};
	\ar@{} "A";(35, 13.5)  *\cir<4pt>{} = "E1";
	\ar@{-} "E1";(35, 24)  *\cir<4pt>{};
	\ar@{} "A";(65, 13.5)  *\cir<4pt>{} = "E2";
	\ar@{-} "E2";(65, 24)  *\cir<4pt>{};
	\ar@{} "A";(115, 13.5)  *\cir<4pt>{} = "E";
	\ar@{-} "E";(115, 24)  *\cir<4pt>{};
	\ar@{} (0,0);(35,8) *{\text{$v_{1}$}};
	\ar@{} (0,0);(65,8) *{\text{$v_{2}$}};
	\ar@{} (0,0);(90,4) *++!D{\cdots};
    \ar@{} (0,0);(115,8) *{\text{$v_{m}$}};
	\ar@{} (0,0);(90,16) *++!D{\cdots};
	\ar@{} "E1";(40, -13.5)  *\cir<4pt>{} = "Fc+10"; 
	\ar@{-} "Fc+10";(35, -24) *\cir<4pt>{} = "Fc+11";
	\ar@{-} "Fc+10";(45, -24) *\cir<4pt>{} = "Fc+12";
	\ar@{-} "Fc+11";"Fc+12";
	\ar@{} "E1";(70, -13.5)  *\cir<4pt>{} = "Fn0";
	\ar@{-} "Fn0";(65, -24) *\cir<4pt>{} = "Fn1";
	\ar@{-} "Fn0";(75, -24) *\cir<4pt>{} = "Fn2";
	\ar@{-} "Fn1";"Fn2";
	\ar@{} (0,0);(40,-8) *{\text{$w_{1}$}};
	\ar@{} (0,0);(55,-22) *++!D{\cdots};
	\ar@{} (0,0);(70,-8) *{\text{$w_{p - 1}$}};
	\ar@{} (0,0);(55,-12) *++!D{\cdots};
	\ar@{} (0,0);(100,-8) *{\text{$w_{p}$}};
	\ar@{} (0,0);(100,-13.5) *\cir<4pt>{} = "F";
	\ar@{-} "F";(86, -25.5) *\cir<4pt>{} = "T1";
    \ar@{-} "F";(90, -29.5) *\cir<4pt>{} = "T2";
    \ar@{-} "T1";"T2";
    \ar@{-} "F";(114, -25.5) *\cir<4pt>{} = "T3";
    \ar@{-} "F";(110, -29.5) *\cir<4pt>{} = "T4";
    \ar@{-} "T3";"T4";
    \ar@{} (0,0); (100, -22) *++!U{\cdots};
    \ar@{} (0,0);(100, -36) *{t \ \text{pendant triangles}};
\end{xy}

  \caption{The Cameron--Walker graph $G^{(1)}_{m,p,t}$}
  \label{fig:specialCW1}
\end{figure}
\end{construction}

We can compute the homological invariants of the edge ideal of $G^{(1)}_{m,p,t}$
by Theorem \ref{CWformulas}. 
\begin{lemma}\label{ddspecialCW1}
  Let $G = G^{(1)}_{m,p,t}$ be the Cameron--Walker
  graph in Construction \ref{const:specialCW1}.
  Then $|V(G)| = 2m + 3p + 2t - 2$,
  $\dim R/I(G) = \deg h(R/I(G)) = \reg (R/I(G)) = m + p + t - 1$,
  and ${\rm depth}(R/I(G)) = m + p$. 
\end{lemma}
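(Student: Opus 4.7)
The plan is to prove every equality by direct substitution into the formulas for Cameron--Walker graphs collected in Theorem \ref{CWformulas}, using the specific parameters of $G = G^{(1)}_{m,p,t}$, namely $s_1=\cdots=s_m=1$, $t_1=\cdots=t_{p-1}=1$, $t_p=t\geq 1$, and the fact that the bipartite part is the complete bipartite graph $K_{m,p}$.

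First, I would verify the vertex count. By Theorem \ref{CWformulas}(i), $|V(G)| = m+p+\sum_{i=1}^m s_i + 2\sum_{j=1}^p t_j$. Substituting $\sum s_i = m$ and $\sum_{j=1}^p t_j = (p-1)+t$ yields $|V(G)| = m+p+m+2(p-1+t) = 2m+3p+2t-2$. Next, for the dimension and $h$-polynomial degree, I apply Theorem \ref{CWformulas}(ii). Since $t_j \geq 1$ for every $j$ (using $t \geq 1$), the set $\{j : t_j=0\}$ is empty, so $\dim R/I(G) = \deg h(R/I(G)) = \sum s_i + \sum t_j = m + (p-1+t) = m+p+t-1$. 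The regularity is handled analogously by Theorem \ref{CWformulas}(v): $\reg(R/I(G)) = m + \sum t_j = m + p + t - 1$.

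The depth requires slightly more care, but the structural hypothesis is tailor-made for it. Since the bipartite part of $G$ is $K_{m,p}$, the \emph{equality} in Theorem \ref{CWformulas}(iv) applies, giving
\[
\depth(R/I(G)) = \min\Bigl\{\, p+\sum_{i=1}^m s_i,\ m+\sum_{j=1}^p t_j \,\Bigr\} = \min\{\,m+p,\ m+p-1+t\,\}.
\]
Because $t \geq 1$, the second quantity is at least $m+p$, so the minimum equals $m+p$, as claimed.

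There is no real obstacle here: every assertion of the lemma is an immediate specialization of the formulas proved elsewhere in the paper. The only small point to highlight in the write-up is that the hypothesis $t \geq 1$ serves two purposes, ensuring that $\{j : t_j=0\}$ is empty (so the correction term in the dimension formula vanishes) and that the depth minimum is attained by $m+p$ rather than $m+p-1+t$.
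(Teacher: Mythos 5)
Your proof is correct and matches the paper's approach exactly: the paper states this lemma as an immediate consequence of Theorem \ref{CWformulas} (using the complete-bipartite case of part (iv) for the depth), which is precisely the substitution you carried out. Your explicit note that $t \geq 1$ both kills the $|\{j : t_j = 0\}|$ term and decides the minimum in the depth formula is the only content the paper leaves implicit.
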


The second special family is as follows:
\begin{construction}
\label{const:specialCW2}
  Let $m \geq 2$, $s \geq 1$ and $t \geq 1$ be integers. 
  Let $G = G^{(2)}_{m,s,t}$ be the Cameron--Walker graph with $p=2$, 
  $s_1 = \cdots = s_{m - 1} = 1$, $s_{m} = s$, $t_1 = t$, and 
  $t_2 =0$ such that 
  \begin{displaymath}
    E(G_{\rm bip}) = \big\{ \{ v_1, w_1 \}, 
      \; \{ v_1, w_2 \}, \{ v_2, w_2 \}, \ldots, \{ v_{m}, w_2 \} \big\}; 
  \end{displaymath}
  see Figure \ref{fig:specialCW2}. 
\end{construction}  
 
  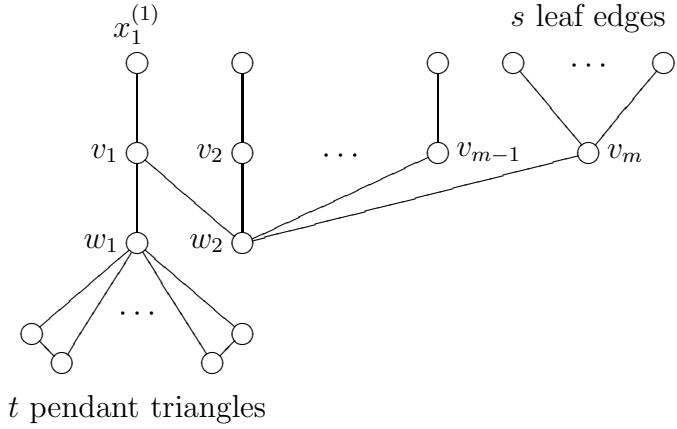
\begin{figure}[htbp]
  \centering
  \begin{xy}
    \ar@{} (0,0);(50, 6) *++!R{v_{1}} *\cir<4pt>{} = "A1";
    \ar@{-} "A1";(50, -6) *++!R{w_{1}} *\cir<4pt>{} = "A2";
    \ar@{} (0,0);(64, 6) *++!R{v_{2}} *\cir<4pt>{} = "B1";
    \ar@{-} "B1";(64, -6) *++!R{w_{2}} *\cir<4pt>{} = "B2";
    \ar@{-} "A1";"B2";
    \ar@{-} "B2";(90, 6) *++!L{v_{m-1}} *\cir<4pt>{} = "C";
    \ar@{-} "B2";(110, 6) *++!L{v_{m}} *\cir<4pt>{} = "D";
    \ar@{} (0,0); (77, 9) *++!U{\cdots};
    \ar@{-} "A1";(50, 18) *++!D{x^{(1)}_{1}} *\cir<4pt>{};
    \ar@{-} "B1";(64, 18) *\cir<4pt>{};
    \ar@{-} "C";(90, 18) *\cir<4pt>{};
    \ar@{-} "D";(100, 18) *\cir<4pt>{};
    \ar@{-} "D";(120, 18) *\cir<4pt>{};
    \ar@{} (0,0); (110, 21) *++!U{\cdots};
    \ar@{-} "A2";(36, -18) *\cir<4pt>{} = "T1";
    \ar@{-} "A2";(40, -22) *\cir<4pt>{} = "T2";
    \ar@{-} "T1";"T2";
    \ar@{-} "A2";(64, -18) *\cir<4pt>{} = "T3";
    \ar@{-} "A2";(60, -22) *\cir<4pt>{} = "T4";
    \ar@{-} "T3";"T4";
    \ar@{} (0,0); (50, -12) *++!U{\cdots};
    \ar@{} (0,0);(110, 24) *{s \ \text{leaf edges}};
    \ar@{} (0,0);(50, -28) *{t \ \text{pendant triangles}};
  \end{xy}

  \bigskip

  \caption{The Cameron--Walker graph $G^{(2)}_{m,s,t}$}
  \label{fig:specialCW2}
  \end{figure}

We compute the homological invariants of the edge ideal of $G^{(2)}_{m,s,t}$. 
\begin{lemma}\label{ddspecialCW2}
  Let $G = G^{(2)}_{m,s,t}$ be the Cameron--Walker graph
  in Construction \ref{const:specialCW2}.
  Then $|V(G)| = 2m + s + 2t + 1$, $\dim R/I(G) = \deg h(R/I(G)) = m + s + t$,
  ${\rm depth} (R/I(G)) = m + 1$ and $\reg (R/I(G)) = m+t$. 
\end{lemma}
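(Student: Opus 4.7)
The plan is to derive each invariant directly from the formulas collected in Theorem \ref{CWformulas}, together with the combinatorial characterization of $\depth$ provided by Theorem \ref{CWdepthV}.

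First, reading off Construction \ref{const:specialCW2} the identities $\sum_{i=1}^m s_i = (m-1) + s$, $\sum_{j=1}^2 t_j = t$, and $|\{j : t_j = 0\}| = 1$, substitution into Theorem \ref{CWformulas}(i) gives
\[
|V(G)| = m + 2 + (m-1+s) + 2t = 2m + s + 2t + 1.
\]
Theorem \ref{CWformulas}(ii) then yields $\dim R/I(G) = \deg h(R/I(G)) = (m-1+s) + t + 1 = m+s+t$, and Theorem \ref{CWformulas}(v) yields $\reg(R/I(G)) = m + t$ immediately. These three computations are routine substitutions once the parameters of the graph are identified.

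The heart of the argument is the depth, which I will handle via a two-sided squeeze. Since exactly one of $t_1, t_2$ is positive, the lower bound of Theorem \ref{CWformulas}(iv) gives
\[
\depth(R/I(G)) \geq m + |\{j : t_j > 0\}| = m + 1.
\]
For the matching upper bound, apply Theorem \ref{CWdepthV} with the candidate subset $V = \{v_1\} \subset \{v_1, \ldots, v_m\}$. From the prescribed edge set of $G_{\mathrm{bip}}$, one sees that $N_{G_{\mathrm{bip}}}(w_1) = \{v_1\} \subset V$, while $N_{G_{\mathrm{bip}}}(w_2) = \{v_1, \ldots, v_m\} \not\subset V$ because $m \geq 2$ forces $v_m \notin V$. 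Therefore
\[
f(V) = s_1 + (m-1) + t_2 + 1 = 1 + (m-1) + 0 + 1 = m + 1,
\]
so by Theorems \ref{CWformulas}(iii) and \ref{CWdepthV} one has $\depth(R/I(G)) = i(G) \leq f(V) = m+1$, completing the squeeze.

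The only piece of nontrivial insight is locating the right witness $V$. The competing demands are that $v_1 \in V$ (so the $t_1$-contribution is absorbed into the last term of $f$, replacing $t$ by $1$) while $v_m \notin V$ (so $N_{G_{\mathrm{bip}}}(w_2)$ is not absorbed, and the large $s_m = s$ does not inflate the leading sum). Taking $V = \{v_1\}$ realizes both constraints simultaneously, so the main obstacle is essentially bookkeeping rather than technical difficulty.
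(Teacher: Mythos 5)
Your proof is correct and follows essentially the same route as the paper: routine substitution into Theorem \ref{CWformulas} for $|V(G)|$, $\dim R/I(G)$, $\deg h(R/I(G))$ and $\reg(R/I(G))$, and a two-sided squeeze on the depth using the same lower bound $m + |\{j : t_j > 0\}| = m+1$ from Theorem \ref{CWformulas}(iv). The only difference is cosmetic: for the upper bound the paper exhibits the independent dominating set $A = \{v_2, \ldots, v_m, w_1, x_1^{(1)}\}$ and uses $\depth(R/I(G)) = i(G) \leq |A|$, whereas you evaluate $f(\{v_1\}) = m+1$ via Theorem \ref{CWdepthV}; these are the same witness, since the set $A(V)$ constructed from $V = \{v_1\}$ in the proof of Theorem \ref{CWdepthV} is precisely the paper's $A$.
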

\begin{proof}  
  We only check the depth. The other invariants are easily follows from
  Theorem \ref{CWformulas}.

  \par
  Let $A = \{ v_2, \ldots, v_{m}, w_{1}, x_1^{(1)}\}$.  
  Then $A$ is an independent set of $V(G)$ with $A \cup N_G (A) = V(G)$.
  Hence one has ${\rm depth}(R/I(G)) = i(G) \leq |A| = m + 1$ by virtue of
  Theorem \ref{CWformulas} (iii). 
  Moreover, Theorem \ref{CWformulas} (iv) says that
  ${\rm depth}(R/I(G)) \geq m + 1$. 
  Thus we have ${\rm depth}(R/I(G)) = m + 1$. 
\end{proof}


\subsection{Cameron--Walker graphs of small depth}
In this subsection we will deal with all the Cameron--Walker graphs
of depth $\leq 2$. Such graphs are classified in \cite[Proposition 2.8]{HKMT}.
We will compute homological invariants of them and determine the
elements in ${\rm CW}_{{\rm depth}, \dim}(n)$ of the form $(2,b)$. 
As an application, we prove that ${\rm CW}_{{\rm depth}, \dim}(n)$ is not
convex for all odd integers $n \geq 9$.

\par
We first recall a classification for the Cameron--Walker graphs
of depth $\leq 2$. 
\begin{lemma} $($\cite[Proposition 2.8]{HKMT}$)$\
  \label{depth2CW}
  There is no Cameron--Walker graph $G$ on $n$ vertices with
  ${\rm depth}(R/I(G)) = 1$.

  \par
  Let $G$ be a Cameron--Walker graph with $\depth (R/I(G)) =2$.
  Then $G$ belongs to one of the following
  families of Cameron--Walker graphs
  with the notation as in Figure \ref{fig:CameronWalkerGraph}:
  \begin{enumerate}
  \item[(e1)] $m = 2$ and $t_j = 0$ for all $1 \leq j \leq p$;
  \item[(e2)] $m=p=1$ and $t_1 = 1$;
  \item[(e3)] $m=p=1$, $t_1 \geq 2$ and $s_1 = 1$.
  \end{enumerate}
\end{lemma}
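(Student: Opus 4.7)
The plan is to combine the sandwich bounds of Theorem~\ref{CWformulas}(iv),
\[
m + |\{j : t_j > 0\}| \;\leq\; \depth(R/I(G)) \;\leq\; \min\!\big\{\, p + \textstyle\sum_{i} s_i,\, m + \textstyle\sum_{j} t_j \,\big\},
\]
with the structural definition of a Cameron--Walker graph to pin down the parameters $(m,p,\{s_i\},\{t_j\})$. I would first show that $\depth(R/I(G)) \geq 2$ always, by arguing $m + |\{j : t_j > 0\}| \geq 2$. When $m \geq 2$ this is automatic; when $m = 1$ and all $t_j = 0$, the graph degenerates into a star centred at $v_1$ (every $w_j$ is a degree-one neighbour of $v_1$, together with the $s_1$ leaf vertices $x_k^{(1)}$), which is explicitly excluded from the Cameron--Walker family. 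Thus at least one $t_j$ must be positive when $m = 1$, giving $\depth \geq 2$ and ruling out the depth-one case.

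Next, assuming $\depth(R/I(G)) = 2$, the lower bound leaves only two possibilities: \textbf{(A)} $m = 2$ with all $t_j = 0$, or \textbf{(B)} $m = 1$ with exactly one $t_j$ positive. Case (A) is precisely (e1), and the upper bound $\min\{p + s_1 + s_2,\,2\} = 2$ confirms depth equals $2$ there. For case (B), WLOG $t_1 > 0$ and $t_j = 0$ for $j \geq 2$; since $m = 1$, the bipartite part is $K_{1,p}$, so the equality clause of Theorem~\ref{CWformulas}(iv) applies and $\min\{p + s_1,\; 1 + t_1\} = 2$. If $p \geq 2$, then every $w_j$ with $j \geq 2$ is a degree-one vertex in $G$, indistinguishable as a graph vertex from a leaf $x_k^{(1)}$, so up to isomorphism I can absorb these into the $s_1$ leaves of $v_1$ and reduce to $p = 1$. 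With $m = p = 1$, the equation $\min\{1 + s_1,\,1 + t_1\} = 2$ holds iff $s_1 = 1$ or $t_1 = 1$, producing (e2) (when $t_1 = 1$, any $s_1 \geq 1$) or (e3) (when $t_1 \geq 2$, forcing $s_1 = 1$). A short direct check then verifies that the depth of each graph in (e1)--(e3) really is~$2$.

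The hard part will be the canonicalization step in case (B): one needs a clean convention, or an explicit graph-isomorphism argument, for identifying a degree-one $w_j$ with a leaf vertex $x_k^{(1)}$, so as to land in the $p = 1$ form stated in (e2) and (e3). This reparametrization is harmless in itself, but must be stated carefully so that the classification genuinely captures all Cameron--Walker graphs of depth~$2$ up to isomorphism, rather than only those presented in a particular labelled form.
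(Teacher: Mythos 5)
Your argument is correct, but there is no in-paper proof to compare it against: Lemma \ref{depth2CW} is quoted from \cite[Proposition 2.8]{HKMT} without proof, and the only ingredient of that result the paper reproduces is the bound in Theorem \ref{CWformulas} (iv), which is exactly what you build on. Your first step (depth at least $2$) is the expected argument: $m + |\{j : t_j>0\}| \leq \depth (R/I(G))$, and $m=1$ with all $t_j = 0$ makes $G$ a star, which is excluded by the definition of a Cameron--Walker graph. The depth-two classification is where your proposal supplies real content. The case split forced by the lower bound ($m=2$ with all $t_j=0$, or $m=1$ with exactly one positive $t_j$), the use of the complete-bipartite equality clause of Theorem \ref{CWformulas} (iv) when $m=1$ (the bipartite part is $K_{1,p}$ by connectedness), and the computation that $\min\{1+s_1,1+t_1\}=2$ if and only if $s_1=1$ or $t_1=1$ together recover (e1)--(e3). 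The ``canonicalization'' issue you flag is the right one to worry about, and your resolution is the intended reading: the presentation of a Cameron--Walker graph in the form of Figure \ref{fig:CameronWalkerGraph} is non-unique exactly when some $w_j$ has $t_j=0$ and degree one in the bipartite part, so the lemma is a classification up to isomorphism, and absorbing the degree-one vertices $w_2,\dots,w_p$ into the leaves of $v_1$ is a legitimate change of presentation of the same graph. You could even shorten that step: when $t_1 \geq 2$, the equality $\min\{p+s_1,\, 1+t_1\}=2$ already forces $p+s_1=2$, hence $p=s_1=1$, so (e3) needs no re-parametrization at all; only the subcase $t_1=1$, where $\min\{p+s_1,\,2\}=2$ holds automatically, genuinely requires the absorption leading to (e2).
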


We compute the homological invariants of Cameron--Walker graphs $G$ 
with $\depth (R/I(G)) = 2$. 
\begin{lemma}
  \label{depth2Invariants}
  Let $G$ be a Cameron--Walker graph with $\depth (R/I(G)) = 2$.
  With the notation as in
  Figure \ref{fig:CameronWalkerGraph}, the invariants of $R/I(G)$ are:
  \begin{enumerate}
  \item When $G$ is type (e1) in Lemma \ref{depth2CW},
    $|V(G)| = s_1 + s_2 + p + 2$,
    $\dim R/I(G) = \deg h(R/I(G)) = s_1 + s_2 + p$, and
    $\reg (R/I(G)) = 2$. 
  \item When $G$ is type (e2) in Lemma \ref{depth2CW},
    $|V(G)| = s_1 + 4$,
    $\dim R/I(G) = \deg h(R/I(G)) = s_1 + 1$, and 
    $\reg (R/I(G)) = 2$. 
  \item When $G$ is type (e3) in Lemma \ref{depth2CW},
    $|V(G)| = 2 t_1 + 3$ and 
    $\dim R/I(G) = \deg h(R/I(G)) = \reg (R/I(G)) = t_1 + 1$.
  \end{enumerate}
\end{lemma}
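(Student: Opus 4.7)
The proof will be a direct case-by-case application of the formulas collected in Theorem \ref{CWformulas}. Specifically, parts (i), (ii), and (v) give closed-form expressions for $|V(G)|$, $\dim R/I(G) = \deg h(R/I(G))$, and $\reg (R/I(G))$ purely in terms of the parameters $m$, $p$, $s_i$, $t_j$ attached to the Cameron--Walker graph, so in each of the three cases (e1), (e2), (e3) it suffices to substitute the constrained parameter values into those formulas.

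The plan is to handle the three cases in turn. For case (e1), where $m=2$ and every $t_j$ vanishes, I would plug $m=2$, $\sum_j t_j = 0$, and $|\{j : t_j = 0\}| = p$ into Theorem \ref{CWformulas}(i),(ii),(v); this yields $|V(G)| = 2 + p + s_1 + s_2$, $\dim R/I(G) = \deg h(R/I(G)) = s_1 + s_2 + p$, and $\reg(R/I(G)) = 2$. For case (e2), where $m = p = 1$ and $t_1 = 1$, the same formulas give $|V(G)| = 1 + 1 + s_1 + 2 = s_1 + 4$, $\dim R/I(G) = \deg h(R/I(G)) = s_1 + 1 + 0 = s_1 + 1$, and $\reg(R/I(G)) = 1 + 1 = 2$. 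For case (e3), where $m = p = 1$, $s_1 = 1$, and $t_1 \geq 2$, substitution yields $|V(G)| = 2 + 1 + 2t_1 = 2t_1 + 3$, $\dim R/I(G) = \deg h(R/I(G)) = 1 + t_1 + 0 = t_1 + 1$, and $\reg(R/I(G)) = 1 + t_1 = t_1 + 1$.

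Since Theorem \ref{CWformulas} is already established and encodes all the combinatorial input needed, no further argument is required beyond verifying that these substitutions are consistent with the hypothesis $\depth(R/I(G)) = 2$ (which is guaranteed by Lemma \ref{depth2CW}). There is no genuine obstacle here: the three computations are routine. The only minor point of care is bookkeeping the term $|\{j : t_j = 0\}|$ in the dimension formula, which equals $p$ in case (e1) and $0$ in cases (e2) and (e3).
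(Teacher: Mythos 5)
Your proposal is correct and is exactly the paper's argument: the paper's proof of Lemma \ref{depth2Invariants} consists of the single sentence that the results follow from Theorem \ref{CWformulas}, and your case-by-case substitutions (including the careful handling of the $|\{j : t_j = 0\}|$ term, which is $p$ in case (e1) and $0$ in the other two) are precisely the computations the paper leaves implicit.
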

\begin{proof}
  We can derive these results by Theorem \ref{CWformulas}.
\end{proof}

By virtue of Lemmas \ref{depth2CW} and \ref{depth2Invariants},
we have the following
\begin{proposition}\label{depth2}
  Let $n \geq 5$ be an integer.
  Assume that $n$ is even (resp.\  odd). 
  Then $(2, b) \in {\rm CW}_{{\rm depth}, \dim}(n)$ if and only if
  $b = n - 2$ or $b = n - 3$
  (resp.\ $b = n - 2$, $b = n - 3$ or $b = (n - 1)/2$). 
\end{proposition}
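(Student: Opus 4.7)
The plan is to derive the result by combining the classification in Lemma \ref{depth2CW} with the numerical data in Lemma \ref{depth2Invariants}. First I would argue the \emph{only if} direction. Any Cameron--Walker graph $G$ with $\depth(R/I(G)) = 2$ belongs to one of the three families (e1), (e2), (e3). In family (e1), the vertex count is $n = s_1 + s_2 + p + 2$ and $\dim R/I(G) = s_1 + s_2 + p = n-2$; in family (e2) we have $n = s_1 + 4$ and $\dim R/I(G) = s_1 + 1 = n-3$; in family (e3) we have $n = 2t_1 + 3$, forcing $n$ to be odd with $n \geq 7$, and $\dim R/I(G) = t_1 + 1 = (n-1)/2$. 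This immediately gives the forward implication: $b$ must be $n-2$ or $n-3$, with the extra possibility $(n-1)/2$ available only when $n$ is odd and $\geq 7$.

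For the \emph{if} direction, I would construct explicit Cameron--Walker graphs in each regime. For $b = n-2$ and any $n \geq 5$, take a type (e1) graph with $s_1 = s_2 = 1$ and $p = n-4$, choosing a connected bipartite part (for instance, $v_1$ adjacent to every $w_j$ and $v_2$ adjacent to $w_1$). For $b = n-3$ and any $n \geq 5$, take (e2) with $s_1 = n-4 \geq 1$. For $b = (n-1)/2$ with $n$ odd and $n \geq 7$, take (e3) with $s_1 = 1$ and $t_1 = (n-3)/2 \geq 2$. In each case the resulting graph is a genuine Cameron--Walker graph on $n$ vertices, and by Lemma \ref{depth2Invariants} it realizes depth $2$ together with the required value of $\dim R/I(G)$.

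The one small bookkeeping point is the boundary case $n = 5$ (odd), where $(n-1)/2 = 2 = n-3$; the value $(n-1)/2$ listed in the "resp." clause is then automatically supplied by (e2), so no separate construction via (e3) is needed (which is consistent with (e3) requiring $n \geq 7$). I expect this parity-related accounting to be the only subtle point: once the three families are enumerated, the dimension formulas in Lemma \ref{depth2Invariants} make both the necessity and sufficiency essentially immediate.
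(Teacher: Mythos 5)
Your proposal is correct and follows essentially the same route as the paper's own proof: both directions rest on the classification in Lemma \ref{depth2CW} and the formulas in Lemma \ref{depth2Invariants}, and your explicit constructions (type (e1) with $s_1=s_2=1$, $p=n-4$; type (e2) with $s_1=n-4$; type (e3) with $t_1=(n-3)/2$) coincide with the paper's choices, including the observation that the case $n=5$, $b=(n-1)/2=n-3$ is absorbed by the type (e2) construction.
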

\begin{proof}
  If $(2,b) \in {\rm CW}_{{\rm depth}, \dim}(n)$, then it is easy to see that
  $b = n-2$ or $b=n-3$ when $n$ is even;  
  $b = n-2$, $b=n-3$ or $b= (n-1)/2$ when $n$ is odd
  by Lemma \ref{depth2Invariants}.

  \par
  We check the other implication.

  \par
  First assume that $b= n-2$. Then the Cameron--Walker graph $G$ of type (e1)
  in Lemma \ref{depth2CW} with $s_1 = s_2 = 1$ and $p = b-2 (\geq 1)$ satisfies
  $|V(G)|=n$ and $(\depth (R/I(G)), \dim R/I(G)) = (2, b)$
  by Lemma \ref{depth2Invariants}. 

  \par
  Next assume that $b= n-3$. Then the Cameron--Walker graph $G$ of type (e2)
  in Lemma \ref{depth2CW} with $s_1 = b-1 (\geq 1)$ satisfies $|V(G)|=n$
  and $(\depth (R/I(G)), \dim R/I(G)) = (2, b)$
  by Lemma \ref{depth2Invariants}. 

  \par
  Lastly, assume that $n$ is odd and $b = (n-1)/2$.
  When $n\geq 7$, the Cameron--Walker graph $G$ of type (e3) 
  in Lemma \ref{depth2CW} with $t_1 = b-1 (\geq 2)$ satisfies
  $|V(G)|=n$ and $(\depth (R/I(G)), \dim R/I(G)) = (2, b)$
  by Lemma \ref{depth2Invariants}.
  When $n=5$ (then $b = 2$), the Cameron--Walker graph $G$ of type (e2) 
  in Lemma \ref{depth2CW} with $s_1 = 1$ satisfies
  $|V(G)|=n$ and $(\depth (R/I(G)), \dim R/I(G)) = (2, b) = (2,2)$
  by Lemma \ref{depth2Invariants}.
\end{proof}

As a corollary, we have 
\begin{corollary}
  If $n \geq 9$ is an odd integer,
  then ${\rm CW}_{{\rm depth}, \dim}(n)$ is not convex. 
\end{corollary}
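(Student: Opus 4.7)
The plan is to exploit Proposition \ref{depth2}, which gives a complete list of the pairs of the form $(2,b)$ in ${\rm CW}_{{\rm depth}, \dim}(n)$ when $n$ is odd, and produce a direct violation of convexity along the horizontal line $a = 2$.

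Concretely, for odd $n \geq 5$, Proposition \ref{depth2} tells us that $(2, b) \in {\rm CW}_{{\rm depth}, \dim}(n)$ if and only if $b \in \{(n-1)/2,\, n-3,\, n-2\}$. For odd $n \geq 9$, I would set $b_1 = (n-1)/2$ and $b_2 = n-3$; these satisfy $b_1 < b_2$ and both $(2, b_1)$ and $(2, b_2)$ lie in ${\rm CW}_{{\rm depth}, \dim}(n)$. I then pick the intermediate value $b^{\ast} = (n+1)/2$ and verify the two inequalities $(n-1)/2 < (n+1)/2 < n-3$: the first is obvious, while the second is equivalent to $n > 7$ and so holds for odd $n \geq 9$.

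It remains to observe that $(2, b^{\ast}) \notin {\rm CW}_{{\rm depth}, \dim}(n)$, which is immediate from Proposition \ref{depth2}: since $b^{\ast} = (n+1)/2$ is distinct from each of $(n-1)/2$, $n-3$, and $n-2$ (the latter two because $b^{\ast} < n-3 < n-2$), it is not one of the admissible values. Thus we have $b_1 < b^{\ast} < b_2$ with $(2, b_1), (2, b_2) \in {\rm CW}_{{\rm depth}, \dim}(n)$ and $(2, b^{\ast}) \notin {\rm CW}_{{\rm depth}, \dim}(n)$, contradicting the first clause in the definition of convexity.

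There is no real obstacle here; the entire corollary is a routine arithmetic consequence of Proposition \ref{depth2}, so the only thing to be careful about is verifying the strict inequalities $b_1 < b^{\ast} < b_2$ and $b^{\ast} \neq n-2$ cleanly for the given range of $n$.
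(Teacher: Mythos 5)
Your proof is correct and takes essentially the same approach as the paper: both rely on Proposition \ref{depth2} to conclude that $(2,(n-1)/2)$ and $(2,n-3)$ lie in ${\rm CW}_{{\rm depth},\dim}(n)$ while no pair $(2,b)$ with $(n-1)/2 < b < n-3$ does, which violates the first clause of convexity once $n \geq 9$. The only difference is cosmetic: you exhibit the explicit missing point $b^{\ast} = (n+1)/2$, whereas the paper simply notes that the gap $(n-3) - (n-1)/2 = (n-5)/2$ is at least $2$.
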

\begin{proof}
  When $n$ is odd, $(n-3) - (n-1)/2 = (n-5)/2$.
  Hence if $n \geq 9$, there is a gap between $(2,n-3)$ and $(2,(n-1)/2)$.
\end{proof}

%

\subsection{Lattice points of ${\rm CW}_{{\rm depth}, \dim}(n)$}
Now we determine the set ${\rm CW}_{{\rm depth}, \dim} (n)$ for any integer
$n \geq 5$.
Note that by Lemma \ref{CWDDLemma},
if $(a,b) \in {\rm CW}_{{\rm depth}, \dim} (n)$, then inequalities 
\begin{displaymath}
  a \leq b, \quad  2 \leq a \leq \left\lfloor \frac{n-1}{2} \right\rfloor,
  \quad  a+b \leq n, \quad  n < a+2b
\end{displaymath}
hold. 
The following theorem is the main result of this section. 
\begin{theorem}\label{CWdd}
    Let $n \geq 5$ be an integer. Then 
    \begin{displaymath}
      \begin{aligned}
        {\rm CW}_{{\rm depth}, \dim} (n)
        &= {\rm CW}_{2, \dim} (n) \cup \left\{ (b, b) \in \mathbb{N}^{2}
             \  \middle| \  \frac{n}{3} < b < \frac{n}{2} \right\} \\
        &\cup \left\{ (a, b) \in \mathbb{N}^{2} \  \middle| \
        3 \leq a \leq \left\lfloor\frac{n - 1}{2}\right\rfloor,\  
        \max \left\{ a, \frac{n-a}{2} \right\} < b \leq n - a \right\}, 
      \end{aligned}
    \end{displaymath}
    where
    \begin{displaymath}
      {\rm CW}_{2, \dim} (n) = \left\{
      \begin{alignedat}{3}
        &\left\{ (2, n - 2), (2, n - 3) \right\},
        &\quad \text{if $n$ is even}, \\
        &\left\{ (2, n - 2), (2, n - 3), \left(2, \frac{n - 1}{2}\right) \right\}, 
        &\quad \text{if $n$ is odd}.
      \end{alignedat}
      \right.
    \end{displaymath}
\end{theorem}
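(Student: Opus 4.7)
The plan is to prove the two inclusions separately. For the forward inclusion, I would combine Lemma \ref{CWDDLemma} with Proposition \ref{depth2}: any pair $(a,b) \in {\rm CW}_{{\rm depth},\dim}(n)$ with $a \geq 3$ must satisfy $a \leq b$, $a \leq \lfloor (n-1)/2 \rfloor$, $a+b \leq n$, and $n < a+2b$, and a short case split on whether $a=b$ or $a<b$ shows that these four inequalities cut out precisely the diagonal piece $\{(b,b) : n/3<b<n/2\}$ together with the off-diagonal piece $\{(a,b) : 3 \leq a \leq \lfloor(n-1)/2\rfloor,\ \max\{a,(n-a)/2\} < b \leq n-a\}$; the pairs with $a=2$ are controlled by Proposition \ref{depth2} and give ${\rm CW}_{2,\dim}(n)$.

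For the reverse inclusion, the task is to exhibit an explicit Cameron--Walker realization of every pair in the claimed set. On the diagonal ($a=b$ with $n/3<b<n/2$), I would use the Cohen--Macaulay Cameron--Walker graph $G$ with $m=3b-n$, $p=n-2b$, all $s_i=t_j=1$, and complete bipartite part $K_{m,p}$; the strict inequalities on $b$ force $m,p \geq 1$, and Theorem \ref{CWformulas} parts (i), (ii), and (iv) yield $|V(G)|=2m+3p=n$ together with ${\rm depth}(R/I(G))=\dim R/I(G)=m+p=b$.

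For the off-diagonal pairs with $a \geq 3$, I would partition the interval $\max\{a,(n-a)/2\} < b \leq n-a$ into three subranges, each handled by a different family. When $b \leq \lfloor(n-1)/2\rfloor$, take $G^{(1)}_{a+2b-n,\, n-2b,\, b-a+1}$ from Construction \ref{const:specialCW1}; the constraints on $b$ make all three parameters positive, and Lemma \ref{ddspecialCW1} delivers the required invariants. When $\lceil n/2 \rceil \leq b \leq n-a-1$, take $G^{(2)}_{a-1,\, 2b-n+1,\, n-a-b}$ from Construction \ref{const:specialCW2}; the bounds give $m \geq 2$, $s \geq 1$, $t \geq 1$, and Lemma \ref{ddspecialCW2} supplies the invariants. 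For the remaining boundary point $b=n-a$, I would use the Cameron--Walker graph with $m=a$, $p=1$, every $t_j=0$, bipartite part $K_{a,1}$, and leaf distribution $s_1=b-a$, $s_2=\cdots=s_a=1$; since every $t_j$ vanishes, Theorem \ref{CWformulas}(iv) squeezes ${\rm depth}(R/I(G))=m=a$, while part (ii) gives $\dim R/I(G)=\sum_i s_i + p = b$.

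The main obstacle is verifying that these three subranges actually tile $(\max\{a,(n-a)/2\},\, n-a]$ without gaps. This rests on the parity-independent identity $\lceil n/2 \rceil = \lfloor(n-1)/2\rfloor + 1$, which makes the first two subranges concatenate, together with the observation that the endpoint $b=n-a$ is unreachable by either $G^{(1)}$ or $G^{(2)}$: both families contain a pendant triangle, and a direct computation from the invariant formulas in Lemmas \ref{ddspecialCW1} and \ref{ddspecialCW2} shows this forces ${\rm depth}(R/I(G)) + \dim R/I(G) < n$. A subsidiary point that must be checked is that the graph constructed at $b=n-a$ is genuinely Cameron--Walker (rather than a star or star triangle), which is automatic once $m = a \geq 3$.
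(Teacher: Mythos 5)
Your proposal is correct and follows essentially the same route as the paper's proof: the forward inclusion via Proposition \ref{depth2} and the inequalities of Lemma \ref{CWDDLemma}, and the reverse inclusion via the identical three realizing families (your diagonal graph coincides with the paper's $G^{(1)}_{3b-n,\,n-2b,\,1}$, and your three off-diagonal subranges are exactly the paper's cases $b<n/2$, $n/2\leq b<n-a$, $b=n-a$ with the same graphs $G^{(1)}_{a+2b-n,\,n-2b,\,b-a+1}$, $G^{(2)}_{a-1,\,2b-n+1,\,n-a-b}$, and the $p=1$, $t_1=0$ graph). The only cosmetic deviation is that for the diagonal part of the forward inclusion you use inequalities (i) and (iii) of Lemma \ref{CWDDLemma} where the paper invokes the Cohen--Macaulay structure from part (iv); both arguments are valid.
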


As a corollary, we have:
\begin{corollary}
  \label{CWddConvex}
  The set ${\rm CW}_{\depth, \dim} (n)$ is convex
  if and only if $n$ is even or $n=5, 7$. 
\end{corollary}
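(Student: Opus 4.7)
The plan is to split on parity and dispatch the ``if'' direction. For the ``only if'' direction, odd $n \geq 9$ is the preceding corollary, so I need only show that $n = 5, 7$ and all even $n \geq 6$ give convex sets. The cases $n = 5, 7$ I would dispatch by direct substitution into Theorem \ref{CWdd}, obtaining the explicit lists $\{(2,2),(2,3)\}$ and $\{(2,3),(2,4),(2,5),(3,3),(3,4)\}$ respectively, and observing convexity by inspection.

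For even $n \geq 6$, I would verify column- and row-convexity separately from the three-piece description in Theorem \ref{CWdd}. For columns, $a = 2$ gives $\{n-3, n-2\}$, an interval. For $a \geq 3$, the third piece contributes the integer interval $\{b : \max\{a, (n-a)/2\} < b \leq n-a\}$, and the diagonal piece contributes $\{a\}$ only when $n/3 < a < n/2$; in that range $a > (n-a)/2$, so the third-piece interval starts at $a+1$ and the union is $\{a, a+1, \ldots, n-a\}$. For rows, the cases $b = n-2$ and $b = n-3$ are direct inspection. For $b \leq n-4$, the column $a=2$ contributes nothing, because $n$ being even rules out the third point $(2, (n-1)/2)$ of ${\rm CW}_{2,\dim}(n)$; so the row is purely third-piece (an interval of $a$-values, each less than $b$) together with possibly the diagonal value $a = b$ when $n/3 < b < n/2$.

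The step I expect to be the main obstacle is the hinge between the third-piece interval and the diagonal singleton in a given row: whenever the diagonal contributes $a = b$, one must check that the top end of the third-piece interval in that row equals $b-1$, so that the union is a consecutive set of integers. This amounts to verifying that, under $n/3 < b < n/2$, the constraints $a \leq n-b$, $a \leq \lfloor(n-1)/2\rfloor$, and $(n-a)/2 < b$ all permit $a = b-1$, an elementary inequality check. Once this connection is established, every row is also an interval and the ``if'' direction follows. Structurally, convexity fails in the odd case precisely because of the spurious low-$b$ point $(2, (n-1)/2)$ in ${\rm CW}_{2,\dim}(n)$, which is absent when $n$ is even and which, for $n \geq 9$, is separated from $\{(2, n-3), (2, n-2)\}$ by a genuine gap.
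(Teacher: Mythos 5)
Your overall route is the natural one, and in effect it is the paper's route as well: the paper gives no written proof of this corollary, treating it as immediate from Theorem \ref{CWdd} together with the preceding corollary that ${\rm CW}_{{\rm depth},\dim}(n)$ fails to be convex for odd $n \geq 9$. Your parity split, your explicit lists for $n=5,7$ (both correct), your column analysis for even $n$, and your identification of the row-level ``hinge'' are all sound in outline.

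However, the hinge step as you state it is false, and it is exactly the step you flagged as the main obstacle. You claim that whenever the diagonal contributes $(b,b)$, i.e.\ $n/3 < b < n/2$, the constraints permit $a = b-1$, so that the third-piece interval in row $b$ reaches up to $b-1$. This fails for even $n$ with $n \equiv 2 \pmod 3$ at $b = (n+1)/3$; for instance $(n,b) = (8,3)$, $(14,5)$, $(20,7)$. In these cases $n/3 < b < n/2$, so the diagonal point $(b,b)$ is present, but for $a = b-1$ one gets $(n-a)/2 = (n-b+1)/2 = b$, violating the strict inequality $(n-a)/2 < b$ (and for $n=8$ one also has $b-1 = 2 < 3$, a constraint you omitted from your list). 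In fact the entire third-piece portion of such a row is empty: the lower bound $a > n-2b = b-1$ collides with $a < b$. The corollary survives because the row is then the singleton $\{b\}$, trivially an interval, but your argument needs this dichotomy rather than the claimed always-true inequality check: either the third-piece part of row $b$ is empty, or it is nonempty, and in the latter case its top end is automatically $\min\left\{\left\lfloor \frac{n-1}{2}\right\rfloor,\, b-1,\, n-b\right\} = b-1$ because $b < n/2$. With that case distinction inserted, every row is an interval and your proof closes.
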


We give some examples. 
\begin{example}
  By virtue of Theorem \ref{CWdd}, one can determine
  ${\rm CW}_{\depth,\dim}(n)$ for all $n$.  In Figure
  \ref{figurecw89}, we have plotted the elements
  of ${\rm CW}_{\depth,\dim}(n)$ for $n=8$ and $n=9$.  Observe
  that when $n=9$, the set is not convex.

\begin{figure}
\begin{tikzpicture}[thick,scale=0.9, every node/.style={scale=0.9}]
\draw[thin,->] (5.5-4,0) -- (8.5-4,0) node[right] {${\rm depth}$};
\draw[thin,->] (6-4,-0.5) -- (6-4,4.5) node[above] {$\dim$};

\foreach \x [count=\xi starting from 0] in {1,2,3,4,5,6,7,8}{
    \draw (5.9-4,\x/2) -- (6.1-4,\x/2);
    \ifodd\xi
    \fi
    \draw (6+1/2-4,1pt) -- (6+1/2-4,-1pt);
    \draw (6+2/2-4,1pt) -- (6+2/2-4,-1pt);
    \draw (6+3/2-4,1pt) -- (6+3/2-4,-1pt);
    \draw (6+4/2-4,1pt) -- (6+4/2-4,-1pt);
     \node[anchor=north] at (6+1/2-4,0) {$1$};
     \node[anchor=north] at (6+2/2-4,0) {$2$};
     \node[anchor=north] at (6+3/2-4,0) {$3$};
     \node[anchor=north] at (6+4/2-4,0) {$4$};
}

\
\foreach \point in {
  (6+2/2-4,5/2),(6+2/2-4,6/2),
  (6+3/2-4,3/2),(6+3/2-4,4/2),(6+3/2-4,5/2)
}
  {
    \fill \point circle (2pt);
}

\draw[thin,->] (5.5+1,0) -- (8.5+1.5,0) node[right] {${\rm depth}$};
\draw[thin,->] (6+1,-0.5) -- (6+1,4.5) node[above] {$\dim$};

\foreach \x [count=\xi starting from 0] in {1,2,3,4,5,6,7,8}{
    \draw (5.9+1,\x/2) -- (6.1+1,\x/2);
    \ifodd\xi
    \fi
\draw (6+1/2+1,1pt) -- (6+1/2+1,-1pt);
 \draw (6+2/2+1,1pt) -- (6+2/2+1,-1pt);
 \draw (6+3/2+1,1pt) -- (6+3/2+1,-1pt);
 \draw (6+4/2+1,1pt) -- (6+4/2+1,-1pt);
  \draw (6+5/2+1,1pt) -- (6+5/2+1,-1pt);
     \node[anchor=north] at (6+1/2+1,0) {$1$};
     \node[anchor=north] at (6+2/2+1,0) {$2$};
     \node[anchor=north] at (6+3/2+1,0) {$3$};
     \node[anchor=north] at (6+4/2+1,0) {$4$};
     \node[anchor=north] at (6+5/2+1,0) {$5$};
}

\
\foreach \point in {
  (
  (6+2/2+1,4/2),
  (6+2/2+1,6/2),
  (6+2/2+1,7/2),
  (6+3/2+1,4/2),
  (6+3/2+1,5/2),
  (6+3/2+1,6/2),
  (6+4/2+1,4/2),(6+4/2+1,5/2)
}
  {
    \fill \point circle (2pt);
  }
  
\node at (8-4,4) {$n=8$};
\node at (8+2,4) {$n=9$};
\end{tikzpicture}
\caption{${\rm CW}_{{\rm depth},\dim}(n)$ for
  $n=8,9$}\label{figurecw89}
\end{figure}
%
%
\end{example}

We close this section by proving Theorem \ref{CWdd}. 
\begin{proof}[{Proof of Theorem \ref{CWdd}}]
  ($\subseteq$) : Let $(a, b) \in {\rm CW}_{{\rm depth}, \dim}(n)$.
  Then there exists a Cameron--Walker graph $G$ with $|V(G)| = n$
  such that ${\rm depth}(R/I(G)) = a$ and $\dim R/I(G) = b$.
  Then one has $a \leq b$. Also note that $a \geq 2$ by Lemma \ref{depth2CW}. 

  \par
  If $a=2$, then one has $(a,b) \in {\rm CW}_{2, \dim} (n)$
  by Proposition \ref{depth2}. 

  \par
  Assume that $3 \leq a = b$. Then $R/I(G)$ is Cohen-Macaulay. 
  Hence one has $n = 2m + 3p$ and $b = m + p$ with the notation as in
  Figure \ref{fig:CameronWalkerGraph} by Lemma \ref{CWDDLemma} (iv).
  Thus $n/3 < b < n/2$. 

  \par
  Assume that $3 \leq a < b$. 
  Then one has $3 \leq a \leq \left\lfloor \frac{n - 1}{2} \right\rfloor$,
  $a < b \leq n - a$ and $(n-a)/2 < b$ by virtue of
  Lemma \ref{CWDDLemma} (i), (ii) and (iii).  

  \par
  Now we have the desired conclusion. 

  \par
  ($\supseteq$) :
  The inclusion ${\rm CW}_{{\rm depth}, \dim} (n) \supseteq {\rm CW}_{2, \dim} (n)$
  follows by Proposition \ref{depth2}. 

  \par
  Let $b$ be an integer with $n/3 < b < n/2$.
  Then the graph $G^{(1)}_{3b-n, n-2b, 1}$ which appears
  in Construction \ref{const:specialCW1} guarantees 
  $(b,b) \in {\rm CW}_{{\rm depth}, \dim} (n)$. 
  Indeed, 
  \begin{itemize}
  \item $|V(G^{(1)}_{3b-n, n-2b, 1})| = 2(3b - n) + 3(n - 2b) + 2 - 2 = n$, 
  \item ${\rm depth}(R/I(G^{(1)}_{3b-n, n-2b, 1})) = (3b - n) + (n - 2b) = b$, and
  \item $\dim R/I(G^{(1)}_{3b-n, n-2b, 1}) = (3b - n) + (n - 2b) + 1 - 1 = b$ 
  \end{itemize}
  by virtue of Lemma \ref{ddspecialCW1}. 
  Hence $\{(b, b) \in \mathbb{N}^{2} \mid n/3 < b < n/2 \}
  \subseteq {\rm CW}_{{\rm depth}, \dim}(n)$. 

  \par
  Next, let $a, b$ be integers such that
  $3 \leq a \leq \left\lfloor\frac{n - 1}{2}\right\rfloor$,
  $\max \{ a, (n-a)/2 \} < b \leq n - a$.
  We distinguish with $3$ cases:
  $b<n/2$; $n/2 \leq b < n-a$; $b =  n-a$. 
  
  \par
  If $b < n/2$, 
  then the graph $G^{(1)}_{a+2b-n,  n-2b, b-a+1}$ which appears
  in Construction \ref{const:specialCW1} guarantees
  $(a,b) \in {\rm CW}_{{\rm depth}, \dim}(n)$. 
  Indeed Lemma \ref{ddspecialCW1} says that
  \begin{itemize}
  \item $|V(G^{(1)}_{a+2b-n,  n-2b, b-a+1})|
    = 2(a + 2b - n) + 3(n - 2b) + 2(b - a + 1) - 2 = n$, 
  \item ${\rm depth}(R/I(G^{(1)}_{a+2b-n,  n-2b, b-a+1}))
    = (a + 2b - n) + (n - 2b) = a$, and
  \item $\dim R/I(G^{(1)}_{a+2b-n,  n-2b, b-a+1})
    = (a + 2b - n) + (n - 2b) + (b - a + 1) - 1 = b$. 
  \end{itemize}

  \par
  If $n/2 \leq b < n -a$, 
  then the graph $G^{(2)}_{a-1, -n+2b+1, n-a-b}$ which appears
  in Construction \ref{const:specialCW2} guarantees 
  $(a,b) \in {\rm CW}_{{\rm depth}, \dim}(n)$. 
  Indeed Lemma \ref{ddspecialCW2} says that
  \begin{itemize}
  \item $|V(G^{(2)}_{a-1, -n+2b+1, n-a-b})|
    = 2(a - 1) + (-n + 2b + 1) + 2(n - a - b) + 1 = n$, 
  \item ${\rm depth}(R/I(G^{(2)}_{a-1, -n+2b+1, n-a-b})) = (a - 1) + 1 = a$, and
  \item $\dim R/I(G^{(2)}_{a-1, -n+2b+1, n-a-b})
    = (a - 1) + (-n + 2b + 1) + (n - a - b) = b$.
  \end{itemize}
  
  \par
  If $b = n-a$, 
  then the Cameron--Walker graph $G$ such that 
  $m = a$, $p = 1$, $s_{i} = 1$ ($1 \leq i \leq a - 1$),
  $s_{a} = b - a$ and $t_{1} = 0$ with the notation as
  in Figure \ref{fig:CameronWalkerGraph} guarantees 
  $(a,b) \in {\rm CW}_{{\rm depth}, \dim}(n)$. 
  Indeed, by virtue of Theorem \ref{CWformulas}, one has
  \begin{itemize}
  \item $|V(G)| = a + 1 + (b - 1) = a + b = n$, 
  \item $\dim R/I(G) = (b - 1) + 1 = b$, and
  \item ${\rm depth}(R/I(G)) = \min\{ b, a \} = a$. 
  \end{itemize}

  \par
  Hence one has 
  \[
  \left\{ (a, b) \in \mathbb{N}^{2} \  \middle| \
  3 \leq a \leq \left\lfloor\frac{n - 1}{2}\right\rfloor,
  \max \left\{ a,  \frac{n-a}{2} \right\} < b \leq n - a \right\} 
  \subseteq {\rm CW}_{{\rm depth}, \dim}(n).  
  \]
%

  \par
  This completes the proof. 
\end{proof}

\medskip

\section{Lattice points of ${\rm CW}_{{\rm depth}, {\rm reg}, \dim, \deg h}(n)$}
\label{sec:MainResult}
Let $n \geq 5$ be an integer. 
In previous section, we determined 
the set ${\rm CW}_{{\rm depth}, \dim} (n)$, which is the set of
all possible pairs $(\depth (R/I(G)), \dim R/I(G))$ arising from
$G \in {\rm CW}(n)$. 
On the other hand, in \cite[Theorem 5.1]{HKMVT}, the authors determined
the set ${\rm CW}_{{\rm reg}, \deg h} (n)$, which is the set of 
all possible pairs $(\reg R/I(G), \deg h(R/I(G)))$ arising from
$G \in {\rm CW}(n)$. 
\begin{theorem}[{\cite[Theorem 5.1]{HKMVT}}]
  \label{RD}
  Let $n \geq 5$ be an integer. Then $(r,d) \in {\rm CW}_{{\rm reg}, \deg h} (n)$
  if and only if 
  \begin{displaymath}
    2 \leq r \leq \left\lfloor \frac{n-1}{2} \right\rfloor
    \quad \text{and} \quad 
    \max \{ r, -2r+n+1 \} \leq d \leq n-r.
  \end{displaymath}
\end{theorem}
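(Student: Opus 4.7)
The plan is to prove both implications of the biconditional using the explicit formulas from Theorem \ref{CWformulas}, which express $n$, $r := \reg(R/I(G))$, and $d := \deg h(R/I(G))$ in terms of the structural data $m, p, s_1, \ldots, s_m, t_1, \ldots, t_p$ of a Cameron--Walker graph $G$. I will introduce the shorthand $s = \sum_{i=1}^m s_i$, $t = \sum_{j=1}^p t_j$, and $q = |\{j : t_j = 0\}|$, giving
\begin{displaymath}
n = m + p + s + 2t, \qquad r = m + t, \qquad d = s + t + q,
\end{displaymath}
subject to the structural constraints $s \geq m \geq 1$, $p \geq 1$, and $0 \leq q \leq p$.

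For the necessity direction, I would derive each of the four inequalities by direct algebra. The bound $r \leq \lfloor (n-1)/2 \rfloor$ comes from $n \geq 2m + 1 + 2t = 2r + 1$. The bound $d \geq r$ follows from $d - r = s + q - m \geq 0$, and $d \leq n - r$ from $n - r - d = p - q \geq 0$. The key lower bound $d \geq n - 2r + 1$ is the deepest: combining Theorem \ref{CWformulas}(iv) and (v) yields $m + (p - q) \leq \depth(R/I(G)) \leq r$, so $p - q \leq r - m \leq r - 1$, whence $n - d = m + t + (p - q) \leq 2r - 1$. Finally, $r \geq 2$ follows because Cameron--Walker graphs have depth at least $2$ by Lemma \ref{depth2CW}, combined with $\depth \leq r$.

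For the sufficiency direction, given $(r,d)$ satisfying the stated inequalities, my plan is to explicitly construct a Cameron--Walker graph $G$ on $n$ vertices with $\reg(R/I(G)) = r$ and $\deg h(R/I(G)) = d$. Setting $k = n - r - d$, the hypotheses guarantee $0 \leq k \leq \min\{r - 1,\, n - 2r\}$, and I would split the argument by the value of $k$. When $k \geq 1$, I would take $m = 1$, $s_1 = 1$, $p = n - 2r$, and place pendant triangles on $w_1, \ldots, w_k$ with counts $t_1 = 2r + d - n \geq 1$, $t_2 = \cdots = t_k = 1$, and $t_{k+1} = \cdots = t_p = 0$, with the bipartite part taken to be the star centered at $v_1$; the pendant triangle at $w_1$ together with the leaf at $v_1$ prevents the graph from being either a star graph or a star triangle. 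When $k = 0$, i.e., $d = n - r$, I would take $m = r \geq 2$, $p = n - 2r \geq 1$, all $s_i = 1$, all $t_j = 0$, with the bipartite part chosen as any spanning tree on $\{v_1, \ldots, v_r\} \cup \{w_1, \ldots, w_{n-2r}\}$; since $m \geq 2$ and there are no triangles, this graph is a valid Cameron--Walker graph. In each case, plugging into the formulas of Theorem \ref{CWformulas} confirms $|V(G)| = n$, $\reg(R/I(G)) = r$, and $\deg h(R/I(G)) = d$.

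The main obstacle is the boundary bookkeeping in the sufficiency step: one must verify that all structural constraints ($s_i \geq 1$, $t_j \geq 1$ at the triangle indices, $p \geq k \geq 0$, bipartite part connected) hold in every subcase, and that the constructed graph is genuinely a Cameron--Walker graph rather than a star or star triangle. The tightest cases are $m = p = 1$ (which forces $n = 2r+1$ and $d = r$, with $t_1 = r - 1 \geq 1$ provided $r \geq 2$) and the extremal values $k = r - 1$ and $k = n - 2r$, where the bound $t_1 \geq 1$ saturates.
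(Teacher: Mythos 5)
Your proposal is correct, but note that this paper contains no proof of Theorem \ref{RD} to compare against: the statement is imported verbatim from \cite[Theorem 5.1]{HKMVT}, so what you have written is a self-contained reconstruction rather than an alternative to an argument in the text. Your reconstruction is sound on both directions. For necessity, the four inequalities do follow from Theorem \ref{CWformulas} exactly as you compute (with $s \geq m$, $p \geq 1$, $0 \leq q \leq p$); the only place where you invoke depth, namely $p - q \leq r - 1$ via $m + |\{j : t_j > 0\}| \leq \depth(R/I(G)) \leq \reg(R/I(G))$, can in fact be done without depth at all, since $|\{j : t_j > 0\}| \leq \sum_j t_j = r - m \leq r - 1$; similarly $r \geq 2$ follows purely combinatorially because $m = 1$ with all $t_j = 0$ forces a star. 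For sufficiency, your two constructions check out: with $k = n - r - d$, the case $k \geq 1$ gives $t = r - 1$, $q = d - r$, hence $(n, r, d)$ as required, and $t_1 = 2r + d - n \geq 1$ is exactly the constraint $d \geq n - 2r + 1$; the case $k = 0$ with $m = r \geq 2$, all $s_i = 1$, all $t_j = 0$ also verifies, and your checks that neither graph is a star or star triangle are the right ones. Your constructions are very much in the spirit of how the present paper proves its own realizability statements (compare the families $G^{(1)}_{m,p,t}$ and $G^{(2)}_{m,s,t}$ used in Theorems \ref{CWdd} and \ref{maintheorem3}), though with $m = 1$ rather than a complete bipartite part; both choices work because only the totals $m + t$ and $s + t + q$ matter for $\reg$ and $\deg h$.
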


\par
In this section, we determine
\[
{\rm CW}_{{\rm depth}, {\rm reg}, \dim, \deg h}(n)
\]
which is the set of all possible tuples
\[
(\depth (R/I(G)), \reg (R/I(G)), \dim R/I(G), \deg h(R/I(G)))
\]
arising from $G \in {\rm CW}(n)$ (Theorem \ref{maintheorem3}).


We note that by virtue of Theorem \ref{CWformulas} and Lemma \ref{depth2CW},
we know if
$(a,r,b,d) \in {\rm CW}_{{\rm depth}, {\rm reg}, \dim, \deg h}(n)$,
then $2 \leq a \leq r \leq b=d$.
In particular, an element belonging to ${\rm CW}_{{\rm depth}, {\rm reg}, \dim, \deg h}(n)$
is always of the form $(a,r,d,d)$ with $2 \leq a \leq r \leq d$. 
In order to determine ${\rm CW}_{{\rm depth}, {\rm reg}, \dim, \deg h}(n)$,
we need more information about the relations among the homological invariants
of $I(G)$ for a Cameron--Walker graph $G$. 

\begin{lemma}\label{CWdrdd}
Let $G$ be a Cameron--Walker graph. 
Then
\begin{enumerate}
\item[$(i)$] $|V(G)| + 1
  \leq {\rm depth}(R/I(G)) + {\rm reg}(R/I(G)) + \dim R/I(G)$. 
\item[$(ii)$] Assume that
  $|V(G)| + 1 = {\rm depth}(R/I(G)) + {\rm reg}(R/I(G)) + \dim R/I(G)$ 
  and ${\rm depth}(R/I(G)) < {\rm reg}(R/I(G))$. 
  Then ${\rm reg}(R/I(G)) = \dim R/I(G)$. 
\end{enumerate}
\end{lemma}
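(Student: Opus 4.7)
The plan is to reduce both statements to arithmetic using the explicit formulas recorded in Theorem \ref{CWformulas}. Introduce the shorthand $s = \sum_{i=1}^m s_i$, $t = \sum_{j=1}^p t_j$, and $k = |\{j : t_j = 0\}|$ (so $p - k = |\{j : t_j > 0\}|$). Then Theorem \ref{CWformulas}(i), (ii), (v), and the lower bound in (iv) give
\begin{align*}
|V(G)| &= m + p + s + 2t, \\
\dim R/I(G) &= s + t + k, \\
\reg(R/I(G)) &= m + t, \\
\depth(R/I(G)) &\geq m + (p - k).
\end{align*}

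For part (i), I would simply add the three invariants and apply the lower bound:
\begin{align*}
  \depth(R/I(G)) + \reg(R/I(G)) + \dim R/I(G)
  &\geq (m + p - k) + (m + t) + (s + t + k) \\
  &= 2m + p + s + 2t = |V(G)| + m.
\end{align*}
Since any Cameron--Walker graph has $m \geq 1$ (the $v_i$-side is nonempty, with $s_i \geq 1$ leaves attached to each $v_i$), one obtains $|V(G)| + 1 \leq \depth + \reg + \dim$.

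For part (ii), equality in the chain above forces two things simultaneously: $m = 1$, and the depth lower bound is sharp, so $\depth(R/I(G)) = 1 + (p - k)$. The key observation is then that when $m = 1$, the bipartite part of $G$ is automatically $K_{1,p}$, hence a complete bipartite graph. Therefore the sharper conclusion of Theorem \ref{CWformulas}(iv) applies and gives
\[
  \depth(R/I(G)) = \min\{p + s_1,\, 1 + t\}.
\]
The hypothesis $\depth < \reg = 1 + t$ forces the minimum to be $p + s_1$; combining with $\depth = 1 + (p - k)$ yields $p + s_1 = 1 + (p - k)$, i.e.\ $s_1 = 1 - k$. Since $s_1 \geq 1$ and $k \geq 0$, this gives $s_1 = 1$ and $k = 0$. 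Substituting back, $\dim R/I(G) = s + t + k = 1 + t + 0 = \reg(R/I(G))$, as claimed.

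There is really only one delicate point: recognizing in (ii) that $m = 1$ automatically puts us in the complete-bipartite case of Theorem \ref{CWformulas}(iv), unlocking the \emph{equality} formula for depth. Without that, the mere lower bound $\depth \geq m + (p-k)$ would not be enough to pin down $s_1$ and $k$ in the equality case. Everything else is linear bookkeeping in $m, p, s, t, k$.
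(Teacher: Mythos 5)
Your proof is correct and takes essentially the same route as the paper's: part (i) is the same arithmetic combination of Theorem \ref{CWformulas} (i), (ii), (v) with the lower bound in (iv), and part (ii) hinges, exactly as in the paper, on the observation that $m=1$ forces the bipartite part to be complete bipartite, so the equality case of Theorem \ref{CWformulas} (iv) together with $\depth(R/I(G)) < \reg(R/I(G))$ pins down the depth as $p+s_1$. The only difference is in the bookkeeping for (ii), and it is a slight improvement: the paper normalizes the decomposition by asserting one ``may assume $t_j>0$ for all $j$'' (silently re-reading pendant vertices $w_j$ with $t_j=0$ as leaves attached to $v_1$), whereas you keep $k=|\{j : t_j=0\}|$ as an unknown and deduce $k=0$ simultaneously with $s_1=1$ from the identity $s_1=1-k$, which avoids having to justify that re-normalization.
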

\begin{proof}
  We use the notation as in Figure \ref{fig:CameronWalkerGraph}. 
  \begin{enumerate}
  \item[$(i)$] By virtue of \cite[Theorem 5.2]{HKMVT} and
    Theorem \ref{CWformulas} (iii), one has 
    \begin{displaymath}
      \begin{aligned}
        &|V(G)| - {\rm reg}(R/I(G)) - \dim R/I(G) \\
        &= |\{j : t_{j} > 0\}| 
        \leq {\rm depth}(R/I(G)) - m \leq {\rm depth}(R/I(G)) - 1. 
      \end{aligned}
    \end{displaymath}
      Hence we have the desired conclusion. 
    \item[$(ii)$] Assume that
      $|V(G)| + 1 = {\rm depth}(R/I(G)) + {\rm reg}(R/I(G)) + \dim R/I(G)$. 
      Then, by the argument of (i), we have $m = 1$. 
      Hence we may assume that $t_{j} > 0$ for all $1 \leq j \leq p$.
      Then by Theorem \ref{CWformulas} (ii), (v), one has
      ${\rm reg}(R/I(G)) = 1 + \sum_{j = 1}^{p}t_{j}$ and 
      $\dim R/I(G) = s_{1} + \sum_{j = 1}^{p}t_{j}$.
      Thus in order to prove (ii), it is sufficient to show $s_1 = 1$. 

      \par
      In this case, the bipartite part of $G$ is a complete bipartite
      graph.
      Hence Theorem \ref{CWformulas} (iv), (v) and the assumption
      that $\depth (R/I(G)) < \reg (R/I(G))$ implies that
      ${\rm depth}(R/I(G)) = \min\{1 + \sum_{j = 1}^{p}t_{j}, p + s_{1}\}
        = p + s_{1}$. 
      Also by Theorem \ref{CWformulas} (i), one has
      $|V(G)| = 1 + p + s_{1} + 2\sum_{j = 1}^{p}t_{j}$.
      Thus it follows that 
      \begin{eqnarray*}
      p &=& |V(G)| - {\rm reg}(R/I(G)) - \dim R/I(G) \\ 
      &=& {\rm depth}(R/I(G)) - 1 \\
      &=& p + s_{1} - 1. 
      \end{eqnarray*}
      Hence $s_{1} = 1$, as desired. 
  \end{enumerate}
\end{proof}
\begin{remark}
  The proof of Lemma \ref{CWdrdd} (ii) implies that
  a Cameron--Walker graph satisfying the assumption (ii)
  of Lemma \ref{CWdrdd} must have the form 
  $m=1$, $s_1 = 1$, and $t_j > 0$ for all $1 \leq j \leq p$.
\end{remark}


\par
Now we come to the main theorem in this paper.

\begin{theorem}\label{maintheorem3}
Let $n \geq 5$ be an integer. Then
\begin{displaymath}
  \begin{aligned}
    {\rm CW}_{{\rm depth}, {\rm reg}, \dim, \deg h} (n) 
    &= {\rm CW}_{2, {\rm reg}, \dim, \deg h} (n) \\
    &\cup \left\{ (a, d, d, d) \in \mathbb{N}^{4} \  \middle| \
    3 \leq a \leq d \leq \left\lfloor\frac{n - 1}{2}\right\rfloor, \  
    n < a + 2d \right\} \\ 
    &\cup \left\{ (a, a, d, d) \in \mathbb{N}^{4} \  \middle| \
    3 \leq a < d \leq n - a,\   n \leq 2a + d - 1 \right\} \\
    &\cup \left\{(a,r,d,d) \in \mathbb{N}^{4} ~\left|~
    \begin{array}{c}
      \mbox{$3 \leq a < r < d < n - r$, } \\
      \mbox{$n + 2 \leq a + r + d$} \\ 
    \end{array}
    \right \}\right., 
  \end{aligned}
\end{displaymath}
where
\begin{displaymath}
  \begin{aligned}
    &{\rm CW}_{2, {\rm reg}, \dim, \deg h} (n) \\
    &= \left\{
    \begin{alignedat}{3}
      &\{(2, 2, n - 2, n - 2), (2, 2, n - 3, n - 3)\}, 
      &\quad &\text{if $n$ is even}, \\
      & \left\{(2, 2, n - 2, n - 2), (2, 2, n - 3, n - 3),
      \left(2, \frac{n-1}{2}, \frac{n-1}{2}, \frac{n-1}{2}\right) \right\}, 
      &\quad &\text{if $n$ is odd}. 
    \end{alignedat}
    \right. 
  \end{aligned}
\end{displaymath}
\end{theorem}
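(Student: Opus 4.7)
The plan is to follow the two-step strategy of Theorem \ref{CWdd}: first prove necessity $(\subseteq)$ by extracting numerical constraints from the background lemmas, then prove sufficiency $(\supseteq)$ by exhibiting Cameron--Walker graphs realizing each admissible tuple. Every element of ${\rm CW}_{\depth,\reg,\dim,\deg h}(n)$ has the form $(a,r,d,d)$ with $2 \leq a \leq r \leq d$ by Theorem \ref{CWformulas}(ii),(v), the inequality $\reg(R/I(G)) \leq \dim R/I(G)$ from \cite{HKMT}, and Lemma \ref{depth2CW}. Thus the four subsets in the statement correspond to the cases $a=2$, $r=d$ with $a \geq 3$, $a=r<d$ with $a \geq 3$, and $a<r<d$ with $a \geq 3$.

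For the necessity $(\subseteq)$, the case $a=2$ is immediate from Lemma \ref{depth2Invariants}. In the case $r=d$, Theorem \ref{RD} supplies $d \leq \lfloor(n-1)/2\rfloor$ while Lemma \ref{CWDDLemma}(iii) supplies $n<a+2d$. For $a=r<d$, combine Lemma \ref{CWDDLemma}(i),(ii) with Lemma \ref{CWdrdd}(i) to obtain $n+1 \leq 2a+d$. In the all-strict case $a<r<d$, the inequality $d<n-r$ needs a separate argument: Theorem \ref{CWformulas}(i),(ii),(v) together yield $|V(G)| - \reg - \dim = |\{j : t_j > 0\}|$, so $d=n-r$ would force all $t_j=0$; then Theorem \ref{CWformulas}(iv),(v) would give $\depth \geq m = \reg$, contradicting $a<r$. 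The bound $n+2 \leq a+r+d$ is then the contrapositive of Lemma \ref{CWdrdd}(ii) combined with (i).

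For the sufficiency $(\supseteq)$, the $a=2$ case is handled by the graphs built in the proof of Proposition \ref{depth2} (regularities via Lemma \ref{depth2Invariants}). In the $r=d$ case, the graph $G^{(1)}_{a+2d-n,\,n-2d,\,d-a+1}$ of Construction \ref{const:specialCW1} works verbatim as in the proof of Theorem \ref{CWdd}, via Lemma \ref{ddspecialCW1}. For the $a=r<d$ case with $n=a+d$, take the Cameron--Walker graph with $m=a$, $p=1$, $s_i=1$ for $i<a$, $s_a=d-a$, $t_1=0$, as in the proof of Theorem \ref{CWdd}; for $a+d+1 \leq n \leq 2a+d-1$, take the complete bipartite part $K_{m,p}$ with $m=2a+d-n$, $p=n-a-d$, all $t_j=1$, $s_1=d-r+1$ and the remaining $s_i=1$, and apply Theorem \ref{CWformulas}(iv) to conclude $\depth=\min\{p+\sum s_i,\,m+\sum t_j\}=a$.

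The main obstacle is the construction for the all-strict case $a<r<d$, which must flexibly span all $n$ with $r+d+1 \leq n \leq a+r+d-2$. I propose the following generalization of Construction \ref{const:specialCW2}. Set $k=n-d-r$ (so $1 \leq k \leq a-2$), $m=a-k$, $p=k$; assign $t_1=r-a+1 \geq 2$ pendant triangles to $w_1$ which is connected to $v_1$ only, and $t_j=1$ pendant triangle to each of $w_2,\dots,w_k$ with each connected to all of $v_1,\dots,v_m$; set $s_1=1$, $s_2=d-r+1$, $s_3=\dots=s_m=1$. A routine count gives $|V|=d+r+k=n$, $\reg=m+\sum t_j=r$, and $\dim=\sum s_i+\sum t_j=d$. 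The delicate step is to verify $\depth=a$: the lower bound $\depth \geq m+|\{j:t_j>0\}|=(a-k)+k=a$ comes from Theorem \ref{CWformulas}(iv), and the matching upper bound is realized by the independent dominating set
\[
A = \{v_2,\dots,v_m\} \cup \{w_1\} \cup \{x_1^{(1)}\} \cup \{y_{1,1}^{(j)} : 2 \leq j \leq k\}
\]
of size $(m-1)+1+1+(k-1)=a$, whose domination one checks case by case (using $s_1=1$ to dominate the leaves of $v_1$ and $m \geq 2$ to dominate the $w_j$ for $j \geq 2$), invoking Theorem \ref{CWformulas}(iii) or Theorem \ref{CWdepthV}.
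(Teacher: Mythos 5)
Your necessity argument and your constructions for the cases $a=2$, $a \leq r = d$, and $a = r < d$ are correct and essentially the paper's: the paper invokes external results (Theorem 5.2 of the regularity/$h$-polynomial paper and Corollary 2.4 of the regularity/$a$-invariant paper) where you rederive the same facts from Theorem \ref{CWformulas}, and it uses Theorem \ref{RD} to get $n \leq 2a+d-1$ where you use Lemma \ref{CWdrdd}(i); these are equivalent, and your realizing graphs in those cases coincide with the paper's up to relabelling. The problem lies in the all-strict case $3 \leq a < r < d$, exactly the step you flagged as the main obstacle.

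Your unified construction breaks down when $k = n-d-r = 1$, a value the hypotheses permit, since they only force $1 \leq k \leq a-2$ (take, e.g., $(a,r,d,n)=(3,4,5,10)$). For $k=1$ you get $p=1$ and $m=a-1\geq 2$, the set $\{w_2,\dots,w_k\}$ is empty, and your $w_1$ is joined to $v_1$ only; then $v_2,\dots,v_m$ together with their leaves are disconnected from the rest, so the resulting graph is not connected and hence not a Cameron--Walker graph. Moreover the failure is structural, not a matter of rewiring: in any Cameron--Walker graph with all $t_j>0$, Theorem \ref{CWformulas}(i),(ii),(v) give $|V(G)| - \dim R/I(G) - \reg(R/I(G)) = p$, so a tuple with $k=1$ forces $p=1$, hence a complete bipartite part $K_{m,1}$, hence $\depth(R/I(G)) = \min\{1+\sum_i s_i,\ \reg(R/I(G))\}$ by Theorem \ref{CWformulas}(iv); solving $\depth = a$, $\dim = d$, $\reg = r$ then forces $m = a+r-d-1$, which is negative for admissible tuples such as $(a,r,d)=(3,4,10)$ with $n=15$. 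So tuples with $n=r+d+1$ can in general only be realized by graphs having some $t_j=0$, which is precisely how the paper proceeds: it takes $p = n-d-r+1 = k+1 \geq 2$, sets $t_p=0$, splits into the cases $n \geq a+2r$ and $n < a+2r$ to choose the bipartite structure and the $s_i$, and computes the depth via Theorem \ref{CWdepthV}. Your verification for $k \geq 2$ (the vertex count, the lower bound $\depth \geq m + |\{j : t_j > 0\}| = a$, and the independent dominating set of size $a$) is sound, so what is missing from your proof is exactly the subfamily $n = d+r+1$.
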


\begin{proof}
  ($\subseteq$) \
  Take an element of ${\rm CW}_{{\rm depth}, {\rm reg}, \dim, \deg h}(n)$.
  As noted in the beginning of this section, it is of the form
  $(a,r,d,d)$ with $2 \leq a \leq r \leq b$.
  Let $G$ be a Cameron--Walker graph with $|V(G)| = n$, 
  ${\rm depth}(R/I(G)) = a$, ${\rm reg}(R/I(G)) = r$,
  and $\dim R/I(G) = \deg h(R/I(G)) = d$. 
  We distinguish the proof with $4$ cases:
  $a=2$; $3 \leq a \leq r=d$; $3 \leq a = r < d$; and 
  $3 \leq a < r <d$.
  
  \par
  First consider the case $a=2$. In this case,
  we have $(a, r, d, d) \in {\rm CW}_{2, {\rm reg}, \dim, \deg h}(n)$
  by Lemma \ref{depth2Invariants}. 

  \par
  Second assume that $3 \leq a \leq r = d$.
  Then Theorem \ref{RD} says that
  $3 \leq a \leq d \leq \left\lfloor \frac{n-1}{2} \right\rfloor$. 
  Moreover, one has $n < a + 2d$ by Lemma \ref{CWDDLemma} (iii). 

  \par
  Third assume that $3 \leq a = r < d$.
  Then Theorem \ref{RD} says that $n \leq 2r + d - 1 = 2a + d - 1$. 
  Moreover, one has $d \leq n - a$ by Lemma \ref{CWDDLemma} (ii).

  \par
  Finally, assume that $3 \leq a < r < d$. 
  Note that $d \leq n - r$ by Theorem \ref{RD}. 
  Suppose that $d = n - r$. Then $t_{j} = 0$ for all $1 \leq j \leq p$
  by virtue of \cite[Theorem 5.2]{HKMVT}. 
  Then \cite[Corollary 2.4]{HKMT} says that $a = r$,
  but this is a contradiction. Thus we have $d < n - r$. 
  Moreover, one has $n + 2 \leq a + r + d$ by Lemma \ref{CWdrdd}.

  \par
  Therefore we have the desired conclusion. 

  \par
  $(\supseteq)$\
  First we consider the inclusion
  ${\rm CW}_{2, {\rm reg}, \dim, \deg h}(n)
  \subseteq {\rm CW}_{{\rm depth}, {\rm reg}, \dim, \deg h}(n)$. 
  The graphs provided in the proof of Proposition \ref{depth2}
  guarantee the inclusion. 

  \par
  Second, let $a, d$ be integers with
  $3 \leq a \leq d \leq \left\lfloor\frac{n - 1}{2}\right\rfloor$
  and $n < a + 2d$. 
  Consider the graph $G^{(1)}_{a+2d-n, n-2d, d-a+1}$ which appears
  in Construction \ref{const:specialCW1}.
  Note that $d \leq \left\lfloor\frac{n - 1}{2}\right\rfloor$ implies
  $n-2d \geq 1$. Then we have
  \begin{itemize}
  \item $|V(G)| = 2(a+2d-n) + 3(n-2d) + 2(d-a+1) - 2 = n$, 
  \item ${\rm depth}(R/I(G)) = (a+2d-n) + (n-2d) = a$, and  
  \item $\dim R/I(G) = {\rm reg}(R/I(G)) = (a+2d-n) + (n-2d) + (d-a+1) -1 = d$ 
  \end{itemize}
  by virtue of Lemma \ref{ddspecialCW1}. Hence one has 
  $(a, d, d, d) \in {\rm CW}_{{\rm depth}, {\rm reg}, \dim, \deg h}(n)$, 
  and thus 
  \[
  \left\{ (a, d, d, d) \in \mathbb{N}^{4} \  \middle| \
  3 \leq a \leq d \leq \left\lfloor\frac{n - 1}{2}\right\rfloor, \, 
  n < a + 2d \right\} \subseteq {\rm CW}_{{\rm depth}, {\rm reg}, \dim, \deg h}(n). 
  \]

  \par
  Third, let $a, d$ be integers with $3 \leq a < d \leq n - a$
  and $n \leq 2a + d - 1$.
  
  \par
  When $d < n - a$, we consider the Cameron--Walker graph $G$
  such that its bipartite part is a complete bipartite graph, 
  $m = 2a + d - n$, $p = n - a - d$,
  $s_i = 1$ for all $1 \leq i \leq m-1$, $s_{m} = d-a+1$, and 
  $t_{j} = 1$ for all $1 \leq j \leq p$
  with notation as in Figure \ref{fig:CameronWalkerGraph}. 
  Then Theorem \ref{CWformulas} says that 
  \begin{itemize}
  \item $|V(G)| = (2a + d - n) + (n - a - d) + (a + 2d - n) + 2(n - a - d) = n$, 
    \item ${\rm depth}(R/I(G)) = \min\{ (2a + d - n) + (n - a - d), (n - a - d) + (a + 2d - n) \} = \min\{a, d\} = a$,
    \item ${\rm reg}(R/I(G)) = (2a + d - n) + (n - a - d) = a$, and  
    \item $\dim R/I(G) = (a + 2d - n) + (n - a - d) = d$.
  \end{itemize}
  Hence one has 
  $(a, a, d, d) \in {\rm CW}_{{\rm depth}, {\rm reg}, \dim, \deg h}(n)$.

  \par
  When $d = n - a$, we consider the Cameron--Walker graph $G$ such that 
  its bipartite part is a complete bipartite graph, 
  $m = a$, $p = d - a$, 
  $s_{i} = 1$ for all $1 \leq i \leq m$
  and $t_{j} = 0$ for all $1 \leq j \leq p$
  with notation as in Figure \ref{fig:CameronWalkerGraph}. 
Then Theorem \ref{CWformulas} says that
\begin{itemize}
    \item $|V(G)| = a + (d - a) + a = a + d = n$,
    \item ${\rm depth}(R/I(G)) = \min\{a, d - a + a\} = \min\{a, d\} = a$, 
    \item ${\rm reg}(R/I(G)) = a$, and 
    \item $\dim R/I(G) = a + (d - a) = d$. 
\end{itemize}
  Hence one has 
  $(a, a, d, d) \in {\rm CW}_{{\rm depth}, {\rm reg}, \dim, \deg h}(n)$.

  \par
  Therefore we have
\[
\left\{ (a, a, d, d) \in \mathbb{N}^{4} \  \middle| \
3 \leq a < d \leq n - a, \, n \leq 2a + d - 1 \right\}
\subseteq {\rm CW}_{{\rm depth}, {\rm reg}, \dim, \deg h}(n). 
\]

\par
Finally, let $a, r, d$ be integers with $3 \leq a < r < d < n - r$
and $n+2 \leq a + r + d$. 

\par
When $n \geq a + 2r$, we consider the Cameron--Walker graph $G$ such that
\begin{itemize}
    \item $m = 2$ and $p = n - d - r + 1 (\geq 2)$, 
    \item $E(G_{\rm bip}) = \left\{ \{v_{1}, w_{1}\}, \{v_{1}, w_{2}\}, \ldots,
      \{v_{1}, w_{n - d - r + 1}\}, \{v_{2}, w_{n - d - r + 1}\} \right\}$, 
    \item $s_{1} = a + r + d - n - 1 (\geq 1)$
      and $s_{2} = n - a - 2r + 2 (\geq 2)$, and 
    \item $t_{1} = 2r + d - n - 1 (> s_{1})$,
      $t_{j} = 1$ for all $2 \leq j \leq n - d - r$ and $t_{n-d-r+1} = 0$ 
\end{itemize}
  with notation as in Figure \ref{fig:CameronWalkerGraph}. 
Then Theorem \ref{CWformulas} says that 
\begin{itemize}
    \item $|V(G)| = m + p + s_{1} + s_{2} + 2\sum_{j = 1}^{p}t_{j} = n$, 
    \item ${\rm reg}(R/I(G)) = m + \sum_{j = 1}^{p}t_{j} = r$, and 
    \item $\dim R/I(G)
      = s_{1} + s_{2} + \sum_{j = 1}^{p}t_{j} + |\{j:t_{j} = 0\}| = d$. 
\end{itemize}
We prove ${\rm depth}(R/I(G)) = a$. 
Then we have 
  $(a, r, d, d) \in {\rm CW}_{{\rm depth}, {\rm reg}, \dim, \deg h}(n)$.

\par
Let $f(V)$ be the function which appears in Theorem \ref{CWdepthV}. 
Then $f(\emptyset) = r > a$, $f(\{v_{1}\}) = a$,
$f(\{v_{2}\}) = n - a - r + 1 > a$ and $f(\{v_{1}, v_{2}\}) = n - 2r + 2 > a$. 
Thus one has ${\rm depth}(R/I(G)) = a$. 

\par
When $n < a + 2r$, we consider the Cameron--Walker graph $G$ such that 
\begin{itemize}
    \item $m = a + 2r + 1 - n (\geq 2)$ and $p = n - d - r + 1 (\geq 2)$.  
    \item The edge set of the bipartite part is 
    \begin{eqnarray*} 
      E(G_{\rm bip}) &=& \left\{ \{v_{1}, w_{1}\}, \{v_{1}, w_{2}\}, \ldots,
      \{v_{1}, w_{n - d - r + 1}\} \right\} \\
      &\cup& \left\{ \{v_{2}, w_{n - d - r + 1}\}, \{v_{3}, w_{n - d - r + 1}\},
      \ldots, \{v_{a + 2r + 1 - n}, w_{n - d - r + 1}\} \right\}.
    \end{eqnarray*}
  \item $s_{1} = d - r (\geq 1)$ and $s_{i} = 1$ for all
    $2 \leq i \leq a + 2r + 1 - n$. 
  \item $t_{1} = d - a (> s_{1})$, $t_{j} = 1$ for all
    $2 \leq j \leq n - d - r$ and $t_{n-d-r+1} = 0$ 
\end{itemize}
  with notation as in Figure \ref{fig:CameronWalkerGraph}. 
Then Theorem \ref{CWformulas} says that 
\begin{itemize}
    \item $|V(G)| = m + p + \sum_{i=1}^{m}s_{i} + 2\sum_{j = 1}^{p}t_{j} = n$, 
    \item ${\rm reg}(R/I(G)) = m + \sum_{j = 1}^{p}t_{j} = r$, and
    \item $\dim R/I(G) = \sum_{i = 1}^{m}s_{i} + \sum_{j = 1}^{p}t_{j} + |\{j:t_{j} = 0\}| = d$. 
\end{itemize}
We prove ${\rm depth}(R/I(G)) = a$. 
Then we have 
  $(a, r, d, d) \in {\rm CW}_{{\rm depth}, {\rm reg}, \dim, \deg h}(n)$.

\par
We use Theorem \ref{CWdepthV} again. Let $V$ be a subset of
$\{ v_1, \ldots, v_s \}$. 
If $v_{1} \not\in V$, then $f(V) = r > a$. 
If $v_{1} \in V$ and $V \neq \{v_{1}, \ldots, v_{a + 2r + 1 - n}\}$, then $f(V) = a$. 
Moreover, $f(\{v_{1}, \ldots, v_{a+2r+1-n}\}) = a + 1$. 
Hence we have ${\rm depth}(R/I(G)) = a$. 

\par
Therefore one has 
\[
\left\{(a,r,d,d) \in \mathbb{N}^{4} ~\left|~
\begin{array}{c}
  \mbox{$3 \leq a < r < d < n - r$, } \\
  \mbox{$n + 2 \leq a + r + d$} \\ 
\end{array}
\right \}\right. 
\subseteq {\rm CW}_{{\rm depth}, {\rm reg}, \dim, \deg h}(n). 
\]

\end{proof}

\begin{example}
By virtue of Theorem \ref{maintheorem3}, we have 
\[
{\rm CW}_{{\rm depth}, {\rm reg}, \dim, \deg h}(8) = \{ (2,2,5,5), (2,2,6,6), (3,3,3,3), (3,3,4,4), (3,3,5,5)\}. 
\]
and
\begin{eqnarray*}
{\rm CW}_{{\rm depth}, {\rm reg}, \dim, \deg h}(9) = \{ (2,2,6,6), (2,2,7,7), (2,4,4,4), (3,4,4,4), \\ 
(4,4,4,4), (3,3,4,4), (3,3,5,5), (3,3,6,6), (4,4,5,5)\}. 
\end{eqnarray*}
\end{example}


\bibliographystyle{plain}

\newpage

\end{document}